\tikzstyle{none}=[draw=none]   
\tikzstyle{bigtiparrow}=[->,thick, >=angle 90]
\tikzstyle{bigtiparrow2}=[->,thick, >=angle 90,preaction={draw=white, -,line width=6pt}]
\tikzstyle{lrarrow}=[<->,thick, >=angle 90,preaction={draw=white, -,line width=6pt}]
\tikzstyle{new}=[rectangle,fill=white,draw=white, inner sep=2pt]
\tikzstyle{new2}=[rectangle,fill=white,draw=white, inner sep=6pt]
\DeclareMathOperator{\CCost}{\mathrm{C-Cost}}
\DeclareMathOperator{\rng}{\mathrm{rng}}
\DeclareMathOperator{\supp}{\mathrm{supp}}
\DeclareMathOperator{\Cost}{\mathrm{Cost}}
\DeclareMathOperator{\RR}{\mathcal{R}}
\DeclareMathOperator*{\esssup}{ess\,sup}
\newcommand{\Stild}{\widetilde{\mathfrak S}}
\newcommand{\MAlg}{\mathrm{MAlg}}
\newcommand{\Aut}{\mathrm{Aut}}
  \newcommand{\R}{\mathbb R}
  \newcommand{\N}{\mathbb N}
  \newcommand{\Z}{\mathbb Z}
  \newcommand{\LL}{\mathrm L}
 \newcommand{\dom}{\mathrm{dom}\;}
  \newcommand{\inv}{^{-1}}
  \renewcommand{\leq}{\leqslant}
  \renewcommand{\geq}{\geqslant}
  \newcommand{\abs}[1]{\left\lvert #1\right\rvert}
  \newcommand{\norm}[1]{\left\lVert #1\right\rVert}
  \newcommand{\la}{\left\langle}
  \newcommand{\ra}{\right\rangle}
  \newcommand{\into}{\hookrightarrow}
\newtheorem{thm}{Theorem}[section]
\newtheorem{cor}[thm]{Corollary}
\newtheorem{lem}[thm]{Lemma}
\newtheorem{prop}[thm]{Proposition}
\newtheorem*{claim}{Claim}
\theoremstyle{definition}
\newtheorem{step}{Step}
\newtheorem{qu}[thm]{Question}
\newtheorem{df}[thm]{Definition}
\newtheorem*{rmq}{Remark}
\newtheorem*{conj}{Conjecture}
\title{On full groups of non-ergodic probability measure-preserving equivalence relations}
\author{François Le Maître}
\date{}
\begin{document}

\maketitle

\begin{abstract}
This article generalizes the results of \cite{gentopergo} to the non-ergodic case by giving a formula relating the topological rank of the full group of an aperiodic pmp equivalence relation to the cost of its ergodic components. Furthermore, we obtain examples of full groups that have a dense free subgroup whose rank is equal to the topological rank of the full group, using a Baire category argument. We then study the automatic continuity property for full groups of aperiodic equivalence relations, and find a connected metric for which they have the automatic continuity property. This allows us to provide an algebraic characterization of aperiodicity for pmp equivalence relations, namely the non-existence of homomorphisms from their full groups into totally disconnected separable groups. A simple proof of the extreme amenability of full groups of hyperfinite pmp equivalence relations is also given, generalizing a result of Giordano and Pestov to the non-ergodic case  \cite[Thm. 5.7]{MR2311665}.
\end{abstract}

\tableofcontents

\section{Introduction}

When considering dynamical systems given by a measure-preserving bijection on a standard probability space, one has at hand various invariants which sometimes allow to classify them up to conjugacy. For instance, the entropy is a complete invariant for Bernoulli shifts, meaning that two Bernoulli shifts are conjugate iff they share the same entropy.  An easier to define invariant is the partition of the probability space induced by the orbits of the dynamical systems. This partition turns out to be as far as possible from a complete invariant: a result of Dye asserts that up to a measure-preserving bijection, all ergodic dynamical systems induce the same partition of the space almost everywhere \cite{MR0131516}. In other words, they are all \textbf{orbit equivalent}. 

Orbit equivalence is thus trivial in the realm of classical ergodic theory, that is, ergodic theory of probability measure-preserving $\Z$-action. We then move to the setting of probability measure-preserving (pmp) actions of  arbitrary countable groups $\Gamma$, where the picture gets much richer. Indeed, whenever $\Gamma$ is a non-amenable group, Epstein has shown that it admits a continuum of non orbit equivalent pmp ergodic free actions \cite{MR2711968}. On the other hand, generalizing the aforementioned result of Dye, Ornstein and Weiss have shown that any two pmp ergodic actions of any two amenable groups are orbit equivalent \cite{MR551753}. 

Given these results, it is desirable to have a more precise understanding of the situation for pmp ergodic free actions of non-amenable groups. But given a non-amenable group $\Gamma$, there are currently no invariants which yield an infinite family of non orbit equivalent pmp free ergodic $\Gamma$-actions. However, if one wants to distinguish the partitions induced by pmp free ergodic actions of two different non-amenable groups, there is an invariant which might do the trick, namely the cost.

Introduced by Levitt in 1995 \cite{MR1366313}, the cost of a pmp action is roughly the minimal number of elements needed to generate the partition of the space into orbits induced by this action. In 2000, Gaboriau showed that all the pmp free actions of the free group on $n$ generators have cost $n$ \cite{MR1728876}, so that free groups of different ranks can never have orbit equivalent pmp free actions. As we will see later, the cost is deeply linked to another natural invariant of pmp actions: their full group, which we now define.

Let us fix a standard probability space $(X,\mu)$ and denote by $\Aut(X,\mu)$ the group of its measure-preserving bijections, identified up to null sets. Given a non-trivial pmp action of a countable group $\Gamma$ on $(X,\mu)$, one associates to it a \textbf{pmp equivalence relation} $\mathcal R_\Gamma$ defined by $x\mathcal R_\Gamma y$ iff $x\in\Gamma y$. Because the equivalence classes of $\mathcal R_\Gamma$ are precisely the $\Gamma$-orbits, orbit equivalence can be rephrased as the study of such pmp equivalence relations up to isomorphism. Now if $\mathcal R$ is a pmp equivalence relation, its \textbf{full group}, denoted by $[\mathcal R]$ is defined by
$$[\mathcal R]=\{T\in\Aut(X,\mu): \forall x\in X, T(x)\mathcal R x\}.$$

The full group of a pmp equivalence relation is a Polish  group when equipped with the uniform metric $d_u(T,T')=\mu(\{x\in X: T(x)\neq T'(x)\})$, homeomorphic as a topological space to an infinite-dimensional Hilbert space \cite[Thm. 1.5]{MR2599891}. 

Suppose that $T:X\to X$ is an isomorphism between two pmp equivalence relations $\mathcal R$ and $\mathcal R'$, i.e. a class measure-preserving bijection of $(X,\mu)$ such that for almost every $x\in X$, $T([x]_\mathcal R)=[T(x)]_{\mathcal R'}$. Then $T$ conjugates the full group of $\mathcal R$ to the full group of $\mathcal R'$, which are consequently isomorphic as topological groups. In other words, the full group is an invariant of orbit equivalence. 

We thus leave the realm of orbit equivalence and are led to study these full groups as topological groups in their own right, which was the main motivation for the already mentioned article of Kittrell and Tsankov \cite{MR2599891}. Furthermore, Dye's reconstruction theorem implies that full groups are a complete invariant of orbit equivalence: any abstract group isomorphism between full groups must descend from a class measure-preserving orbit equivalence between the corresponding equivalence relations, as long as one of the two pmp equivalence relations has no finite orbits.

Let us now give examples where one can see concretely how the full group ‘‘remembers'' the pmp equivalence relation. 

\begin{thm}[{Eigen \cite{MR654590}, Dye \cite[Prop. 5.1]{MR0158048}}] Let $\mathcal R$ be a pmp equivalence relation. Then $\mathcal R$ is ergodic iff $[\mathcal R]$ is simple. Moreover, the closed normal subgroups of $[\mathcal R]$ are of the form 
$$[\mathcal R_A]=\{T\in[\mathcal R]: \supp T\subseteq A\},$$
where $A$ is an $\mathcal R$-invariant subset of $X$. 
\end{thm}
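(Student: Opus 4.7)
The plan is to proceed in three stages: first verify that each $[\mathcal R_A]$ is a closed normal subgroup, then establish the ergodic case (simplicity), and finally reduce the general classification of closed normal subgroups to the ergodic case.

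For the first stage, if $A$ is $\mathcal R$-invariant and $S\in[\mathcal R]$, then $S(A)=A$ modulo null sets (since $S(x)\mathcal R x$ almost everywhere), so for $T\in[\mathcal R_A]$ one has $\supp(STS^{-1})=S(\supp T)\subseteq A$, giving normality; closedness in the uniform metric is immediate from continuity of $T\mapsto\mu(\supp T\setminus A)$.

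For the ergodic case, I would prove that if $\mathcal R$ is ergodic and $T\in[\mathcal R]$ is non-trivial, then the normal closure of $T$ equals $[\mathcal R]$. Pick a measurable $B\subseteq X$ with $\mu(B)>0$ and $T(B)\cap B=\emptyset$, which is possible since $T\neq\mathrm{id}$. A standard commutator identity (multiplying $T$ by a well-chosen conjugate) exhibits in the normal closure of $T$ the involution that swaps $B$ with $T(B)$ along $T$. Using ergodicity of $\mathcal R$ to conjugate small pieces of this involution to arbitrary pieces of the same measure across $X$, iterated conjugation reaches every involution in $[\mathcal R]$; since involutions topologically generate $[\mathcal R]$ and the normal closure is closed, simplicity follows.

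For the general classification, let $N$ be a closed normal subgroup of $[\mathcal R]$ and let $A$ be the essential union $\bigcup_{T\in N}\supp T$, a well-defined measurable subset of $X$ up to null sets. To see that $A$ is $\mathcal R$-invariant, observe that for any $S\in[\mathcal R]$ and $T\in N$, the conjugate $STS^{-1}$ still lies in $N$ and has support $S(\supp T)$, so $S$ preserves $A$ modulo null sets; since the $[\mathcal R]$-orbits coincide modulo null sets with the $\mathcal R$-classes, $A$ is $\mathcal R$-invariant. The inclusion $N\subseteq[\mathcal R_A]$ is then immediate. For the converse inclusion, I would invoke the ergodic decomposition of $\mathcal R$: on each ergodic component meeting $A$ in positive measure, $N$ contains a non-trivial element by definition of $A$, so by the ergodic case applied to the restricted relation $N$ contains every element of $[\mathcal R]$ supported on that piece, and a measurable selection across components assembles these contributions into any prescribed element of $[\mathcal R_A]$.

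The main obstacle is the ergodic step: organizing the iterated conjugation so that one genuinely obtains \emph{all} involutions of $[\mathcal R]$ in the normal closure of a single non-trivial element requires a delicate decomposition of involutions into pieces of controlled support and a careful use of ergodicity to transport those pieces. The assembly across possibly uncountably many ergodic components, using the closedness of $N$ and a measurable selection, is the other subtle point.
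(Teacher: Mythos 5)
The paper itself does not prove this statement (it is quoted from Eigen and Dye), so your sketch has to stand against the classical arguments, and as written it has two genuine gaps. The first is in the ergodic step: Eigen's theorem is \emph{abstract} simplicity, but your argument places all involutions in the normal closure of $T$ and then concludes because ``involutions topologically generate $[\mathcal R]$ and the normal closure is closed.'' The abstract normal closure of a single element need not be closed; what your reasoning actually rules out is a proper non-trivial \emph{closed} normal subgroup. To get simplicity as an abstract group you need a bounded, purely algebraic generation statement — e.g.\ that every element of $[\mathcal R]$ is a product of at most three involutions (Ryzhikov, the very result this paper invokes in the proof of Theorem \ref{autocont}), together with a decomposition of an arbitrary involution into finitely many involutions whose supports are small enough to be reached by conjugating the involution produced by the commutator trick. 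With such an input no closure is ever taken and the argument is correct; without it the step fails. You also never address the converse implication of the ``iff'': if $\mathcal R$ is not ergodic, one must exhibit $[\mathcal R_A]$ as a non-trivial \emph{proper} normal subgroup for a suitable invariant $A$ (easy, but it is half the statement).

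The second gap is in the non-ergodic classification. The inclusion $N\subseteq[\mathcal R_A]$ and the invariance of $A$ are fine, but the reverse inclusion via ``each ergodic component'' does not work as written. Ergodic components are in general null sets, uncountably many of them, and — more fundamentally — the restriction of an element of $N$ to an invariant piece, extended by the identity, has no reason to lie in $N$; so ``$N$ contains a non-trivial element on each component by definition of $A$'' is unjustified. Localization has to come from the commutator trick: for $T\in N$ non-trivial and $U\in[\mathcal R]$ supported in a set $B$ with $T(B)\cap B=\emptyset$, the element $TUT\inv U\inv$ lies in $N$ and is supported near $B$. Likewise, ``a measurable selection across components assembles these contributions'' hides exactly the uniformity problem: one cannot multiply uncountably many component-wise choices and land in the closed group $N$ without a global approximation argument. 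The way to repair this, consistent with the paper's own toolkit (and essentially Dye's proof), is to stay global: use the conditional measure $\mu_{\mathcal R}$ and Proposition \ref{transitive} to conjugate the small involution obtained from the commutator trick onto any set of the same conditional measure inside $A$, deduce that $N$ contains all involutions supported in $A$ with small enough conditional support, and then use generation by involutions together with the closedness of $N$ to conclude $[\mathcal R_A]\subseteq N$. As it stands, the component-by-component assembly is the missing idea.
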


\begin{thm}[{Giordano-Pestov \cite[Thm. 5.7]{MR2311665}}]
Let $\mathcal R$ be an ergodic equivalence relation. Then $\mathcal R$ is amenable iff $[\mathcal R]$ is extremely amenable.
\end{thm}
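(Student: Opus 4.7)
The plan is to use Pestov's criterion that a Polish group admitting a dense increasing union of compact subgroups whose normalized Haar measures form a Lévy family is extremely amenable, combined with the Connes-Feldman-Weiss theorem identifying amenability of an ergodic pmp equivalence relation with hyperfiniteness. Assuming $\mathcal R$ amenable (hence hyperfinite), I would write $\mathcal R=\bigcup_n \mathcal R_n$ as an increasing union of finite pmp subequivalence relations with orbit sizes tending to infinity almost everywhere. Each $[\mathcal R_n]$ is then a compact subgroup of $[\mathcal R]$ for $d_u$: it is a measurable field of finite symmetric groups over the quotient $X/\mathcal R_n$, and $\bigcup_n[\mathcal R_n]$ is easily seen to be dense in $[\mathcal R]$.

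\textbf{Lévy property.} Equipping each $[\mathcal R_n]$ with its normalized Haar measure $\eta_n$, the technical heart is to show that $\bigl([\mathcal R_n],d_u,\eta_n\bigr)$ forms a Lévy family. The uniform metric on $[\mathcal R_n]$ decomposes as an integral, over $X/\mathcal R_n$, of the normalized Hamming metrics on the finite fibres: for $T,T'\in[\mathcal R_n]$, the quantity $d_u(T,T')$ is the integral against the pushforward measure of the fibrewise normalized Hamming distance between the induced permutations. Maurey's concentration inequality for the symmetric group with normalized Hamming metric yields Gaussian-type concentration improving with orbit size; integrating these fibrewise bounds and using that the orbit sizes of $\mathcal R_n$ grow without bound, one concludes that the concentration functions $\alpha_n$ tend to zero. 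Pestov's Lévy group theorem then yields that $[\mathcal R]$ is extremely amenable.

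\textbf{Converse and main obstacle.} For the reverse direction I would argue by contrapositive. Extreme amenability of $[\mathcal R]$ implies amenability as a topological group, so it suffices to show that amenability of $[\mathcal R]$ forces amenability of $\mathcal R$. The idea is to construct a continuous affine $[\mathcal R]$-action on the compact convex space of means on the natural orbit-wise $\ell^\infty$-bundle over $\mathcal R$ (equivalently, on a weak-$\ast$ compact convex subset of the predual of the groupoid von Neumann algebra), and to verify that a $[\mathcal R]$-fixed mean descends to an $\mathcal R$-invariant one, thereby forcing amenability of $\mathcal R$. The main obstacle on the forward side is the fibrewise assembly of Maurey's concentration bound: when the orbit sizes of $\mathcal R_n$ are random rather than constant, one must carefully keep track of the normalizations in the disintegration before integrating the Gaussian tails, so as to obtain a single concentration function going to zero. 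On the converse side, the delicate point is identifying the correct compact convex $[\mathcal R]$-space whose fixed points genuinely encode $\mathcal R$-invariant means; this is the bridge between the group and groupoid formulations of amenability, and is where I expect the real work.
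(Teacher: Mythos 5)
Your converse direction is fine and is essentially the paper's argument: extreme amenability gives amenability of the topological group $[\mathcal R]$, a fixed point in the compact convex set of means on the orbit-wise $\ell^\infty$-bundle gives an invariant mean for $\mathcal R$, and Connes--Feldman--Weiss closes the loop. The problem is in the forward direction, and it is a genuine gap: the subgroups $[\mathcal R_n]$ are \emph{not} compact for $d_u$. For a finite subrelation $\mathcal R_n$ over a non-atomic base, $[\mathcal R_n]$ is isomorphic to a product of groups of the form $\LL^0(A_k,\mathfrak S_k)$ (measurable maps from a transversal to a finite symmetric group), and such an $\LL^0$-group over a non-atomic space is never compact; in particular it carries no invariant (Haar) Borel probability measure, so the objects $\eta_n$ in your Lévy family do not exist, and Pestov's criterion cannot be applied to the chain $[\mathcal R_1]\subseteq[\mathcal R_2]\subseteq\cdots$ as you set it up. The intended "independent fibrewise Haar measure" also fails to be a countably additive Borel measure on $\LL^0$ when the base is non-atomic, so the plan of integrating Maurey's fibrewise Gaussian bounds over $X/\mathcal R_n$ has nothing to integrate against. (Requiring the orbit sizes of $\mathcal R_n$ to tend to infinity does not help with this; it only mimics where concentration would come from if the subgroups were finite.)

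There are two standard repairs. The Giordano--Pestov route is to replace $[\mathcal R_n]$ by genuinely finite (hence compact) subgroups with dense union --- for the hyperfinite ergodic relation these are the dyadic permutation groups $\Stild_{2^n}$, whose union is dense in $[\mathcal R_0]$, and Maurey's concentration for $(\mathfrak S_{2^n},d_{\mathrm{Ham}})$ then gives the Lévy property; this is exactly where your Maurey input belongs. The paper instead avoids the Lévy machinery altogether: each $[\mathcal R_n]\cong\prod_k\LL^0(A_k,\mathfrak S_k)$ is extremely amenable by Glasner's theorem (extreme amenability of $\LL^0(X,\mu;K)$ for compact $K$, where the concentration is hidden), the union $\bigcup_n[\mathcal R_n]$ is dense by a direct extension argument, and a topological group containing a dense increasing union of extremely amenable subgroups is extremely amenable; note this softer route does not show $[\mathcal R]$ is a Lévy group, whereas the compact-subgroup route does. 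Either fix rescues your outline, but as written the compactness claim and the measures $\eta_n$ are the step that fails.
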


\begin{thm}[{Le Maître \cite[Thm. 1]{gentopergo}}]
Let $\mathcal R$ be a pmp ergodic equivalence relation. Then the topological rank\footnote{By definition, the topological rank $t(G)$ of a topological group $G$ is the minimal number of elements needed to generate a dense subgroup of $G$.} $t([\mathcal R])$ of the full group of $\mathcal R$ is related to the integer part of the cost of $\mathcal R$ by the formula
$$t([\mathcal R])=\lfloor\Cost(\mathcal R)\rfloor+1.$$
\end{thm}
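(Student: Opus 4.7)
The plan is to prove both inequalities $t([\RR]) \leq \lfloor c \rfloor + 1$ and $t([\RR]) \geq \lfloor c \rfloor + 1$ separately, where $c = \Cost(\RR)$.

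For the lower bound, suppose $T_1, \ldots, T_n$ topologically generate $[\RR]$. Density of $\la T_1, \ldots, T_n \ra$ in $[\RR]$ implies the partition of $X$ into its orbits coincides almost everywhere with the $\RR$-classes: every involution in $[\RR]$ swapping two disjoint $\RR$-equivalent positive-measure sets is approximated in uniform metric by words in the $T_i$'s, forcing the corresponding $\RR$-class to already be contained in a single orbit of $\la T_1, \ldots, T_n\ra$. Hence $\Phi = (T_1, \ldots, T_n)$ is a graphing of $\RR$, and
\[ c = \Cost(\RR) \leq \Cost(\Phi) = \sum_{i=1}^n \mu(\supp T_i) \leq n. \]
For non-integer $c$ this immediately yields $n \geq \lceil c \rceil = \lfloor c \rfloor + 1$. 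The integer case $c = n$ requires an extra perturbation argument: one shows that no graphing of an ergodic $\RR$ by $n$ full-support elements of $[\RR]$ can achieve $\Cost(\RR)$, by modifying one of the $T_i$ inside the full group into a cheaper partial isomorphism whose domain is still linked to the others through the orbits of $\RR$, producing a graphing of cost strictly less than $c$.

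For the upper bound, set $n = \lfloor c \rfloor + 1$ and fix a small $\epsilon > 0$. By the definition of cost, pick a graphing $(\phi_1, \ldots, \phi_N)$ of $\RR$ with $\sum \mu(\dom \phi_i) < c + \epsilon$. I would construct generators $U_1, \ldots, U_n \in [\RR]$ as follows: choose $U_1$ ergodic and aperiodic (possible by genericity inside $[\RR]$), build a very tall Rokhlin tower of $(X,\mu)$ under $U_1$, and repackage the $\phi_i$'s into $n-1$ further elements $U_2, \ldots, U_n$ whose supports sit in narrow columns of the tower in such a way that every $\phi_i$ is recovered up to conjugation by suitable powers of $U_1$. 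Density of $\la U_1, \ldots, U_n \ra$ in $[\RR]$ is then reduced, via a Kittrell--Tsankov-style criterion, to uniform approximation of all involutions with arbitrarily small support. Conjugates of $U_2, \ldots, U_n$ by powers of $U_1$ slide their supports along the tower, and suitable products of such conjugates realize prescribed transpositions of small equal-measure pieces.

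The technical heart of the argument is this density verification: one must orchestrate $U_1, \ldots, U_n$ so that words in them produce enough local transpositions uniformly across $(X,\mu)$. Sharpness of $n = \lfloor c \rfloor + 1$ appears precisely here, because $U_1$ alone already contributes $1$ to the cost of any graphing containing it, and $\lfloor c \rfloor$ further elements are exactly what is needed to exhaust the remaining cost while still leaving enough margin for the conjugation scheme to access every $\RR$-class. Ergodicity of $\RR$ is used at several points: to choose $U_1$ ergodic, to build a single Rokhlin tower spanning the entire space, and to run the perturbation argument in the lower bound when $c$ is an integer.
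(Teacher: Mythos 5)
Your lower bound fails precisely at the only delicate point, the integer case. The claim you invoke---that no graphing of an ergodic $\RR$ by $n$ full-support elements of $[\RR]$ can attain $\Cost(\RR)$---is simply false: any essentially free pmp action of $\mathbb F_n$ provides $n$ full-support elements of the full group whose graphing attains the cost $n$ (for $n=1$, any ergodic $T$ attains cost $1$). Worse, the perturbation you sketch is supposed to output a generating graphing of cost strictly less than $c=\Cost(\RR)$, which is impossible by the very definition of cost. The correct argument (Miller's, which is what \cite{gentopergo} quotes and what the present paper reuses for the lower bound in Theorem \ref{thethm}) must use the density hypothesis a second time: if the graphing $\{T_1,\dots,T_n\}$ given by the topological generators attained the integer cost $n$, then by Gaboriau's theorem \cite{MR1728876} a finite-cost graphing attaining the cost is a treeing; a treeing by $n$ elements with full domain forces $\la T_1,\dots,T_n\ra\simeq\mathbb F_n$ to act essentially freely, so any two distinct elements of this subgroup are at uniform distance $1$, i.e.\ the subgroup is discrete in $([\RR],d_u)$ and cannot be dense in the perfect Polish group $[\RR]$. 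Hence $\Cost(\RR)<n$, which is what yields $n\geq\lfloor\Cost(\RR)\rfloor+1$.

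For the upper bound your outline has the right shape (one aperiodic generator playing the role of the odometer, the residual cost packaged into $\lfloor c\rfloor$ further elements, density checked by producing transpositions via conjugation by powers of $U_1$), but the step you defer as ``the technical heart'' is the entire proof and is not supplied. A single aperiodic $U_1$ does not topologically generate $[\RR_{U_1}]$, so one needs a Matui-type theorem: two elements $T,U$, with $U$ of arbitrarily small support, topologically generating the hyperfinite full group, together with the recovery property for odd cycles (condition (\ref{CUeng}) of Theorem \ref{thmgenEO}), which via the Chinese remainder trick allows the small generator $U$ to be merged with one of the cycles carrying residual cost. Without that merging device your scheme naturally produces $\lfloor c\rfloor+2$ generators ($U_1$, the transposition-producing element, and $\lfloor c\rfloor$ cost-carrying elements), not $\lfloor c\rfloor+1$; asserting that the same $\lfloor c\rfloor$ elements can do both jobs is exactly what must be proved. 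One also needs Gaboriau's induction lemma (the ergodic case of Lemma \ref{troispointcinq}) to arrange that the residual graphing, of cost $<\lfloor c\rfloor$, together with the subrelation generated by $U_1$ still generates $\RR$, and the Kittrell--Tsankov join theorem (Theorem \ref{ktdense}) to conclude; ``uniform approximation of all involutions with arbitrarily small support'' is not by itself a valid density criterion, and ``suitable products of conjugates realize prescribed transpositions'' is a restatement of the goal rather than an argument.
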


This article is an attempt to further understand algebraic and topological properties of full groups, especially in the non-ergodic case. The results are extracted from the author's thesis.

\subsection{Topological rank for full groups}

Our main result is a direct continuation of \cite{gentopergo}, where the question of the topological rank  of the full group of a pmp ergodic equivalence relation $\mathcal R$ was addressed, using previous works of Kittrell, Tsankov \cite{MR2599891} and Matui \cite{MR3103094}. We may first note that when a pmp equivalence relation $\mathcal R$ has all its orbits of cardinality $\leq n$ for a fixed $n\in\N$, its full group cannot be topologically generated by a finite number of elements, for it is a locally finite group (i.e. all finitely generated subgroups are finite). Moreover, if there is a positive $\mathcal R$-invariant set $A$ onto which $\mathcal R$ has  only finite equivalence classes, then in particular there exists $n\in\N$ and a smaller positive $\mathcal R$-invariant set $B$ onto which $\mathcal R$ has has all its orbits of cardinality $\leq n$. Then the full group of $\mathcal R$ factors continuously onto the full group of its restriction to $B$, which is not topologically finitely generated by the above observation, so that the full group of $\mathcal R$ is not topologically finitely generated. So we can restrict ourselves to \textbf{aperiodic} pmp equivalence relations, that is, having only infinite classes. 

In this article, we compute the topological rank of full groups of pmp aperiodic equivalence relations, using the same line of ideas as in \cite{gentopergo}, but with some modifications which we now briefly explain. First, in \cite{gentopergo} we used a result of Matui \cite[Thm. 3.2]{MR3103094} which gives two topological generators for the full group of the \textit{ergodic} hyperfinite equivalence relation\footnote{Note that Andrew Marks has now a much shorter proof of Matui's result, available as a note \href{http://www.its.caltech.edu/~marks/notes/top_gen.pdf}{on his website}.}, and so we had to generalize Matui's theorem to non-ergodic hyperfinite equivalence relations (Theorem \ref{thmgenEO}). 

Second, instead of working with the ergodic decomposition, we chose a more ‘‘functional'' approach which was already present in Dye's founding paper \cite{MR0131516}. If $\mathcal R$ is a pmp equivalence relation, we have a conditional expectation which turns every measurable map into an $M_{\mathcal R}$-measurable map, where $M_{\mathcal R}$ is the $\sigma$-algebra of all $\mathcal R$-invariant sets. This yields what we call the $\mathcal R$-\textbf{conditional measure} of a set $A$, defined to be the conditional expectation of its characteristic function $\chi_A$. Such a conditional measure can of course be understood as $x\mapsto \mu_x(A)$, where $(\mu_x)_{x\in X}$ is the ergodic decomposition of $\mathcal R$.  Also note that when $\mathcal R$ is ergodic, the $\mathcal R$-conditional measure equals $\mu$. Every element of $[\mathcal R]$ preserves the $\mathcal R$-conditional measure and conversely, if $A,B\subseteq X$ have the same $\mathcal R$-conditional measure, then there is $T\in[\mathcal R]$ such that $T(A)=B$ (Proposition \ref{transitive}). More details can be found in subsection \ref{condmesdefs}.

In the same spirit, we introduce a notion of cost for non-ergodic equivalence relations which can be undersood as a function mapping an ergodic component to its cost, which we refer to as conditional cost. Again rather than using the ergodic decomposition, we see the  conditional cost of a pmp equivalence relation $\mathcal R$ as an $M_{\mathcal R}$-measurable function from $X$ to $\R^+\cup\{+\infty\}$. Also note that when $\mathcal R$ is ergodic, the conditional cost is the usual cost. For more details, cf. subsection \ref{CCost}.

We can now state our main theorem:
\begin{thm}\label{thethm}
Let $\mathcal R$ be an aperiodic pmp equivalence relation. Then the topological rank $t([\mathcal R])$ of the full group of $\mathcal R$ is related to the conditional cost of $\mathcal R$ by the relation

$$t([\mathcal R])=\esssup_{x\in X} \lfloor \CCost(\mathcal R)(x)\rfloor+1.$$
\end{thm}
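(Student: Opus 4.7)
The plan is to split the proof into two inequalities, mimicking the ergodic case from \cite{gentopergo} but carrying out every step in the fibered, non-ergodic setting. Write $n = \esssup_{x \in X} \lfloor \CCost(\mathcal R)(x)\rfloor$; I will treat the finite case in detail and indicate that $n = +\infty$ follows by pushing the same arguments through with arbitrarily large integers $k$.

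For the lower bound $t([\mathcal R]) \geq n + 1$, I would take any $T_1, \ldots, T_k \in [\mathcal R]$ topologically generating $[\mathcal R]$ and show $k \geq n + 1$. The first step is to observe, by a standard uniform-approximation argument, that the $T_i$ generate $\mathcal R$ as an equivalence relation: any $T \in [\mathcal R]$ is a uniform limit of words in the $T_i$, which forces $T(x)$ to lie in the subrelation generated by $T_1, \ldots, T_k$ for almost every $x$. Combined with the natural conditional-cost bound $\CCost(\mathcal R) \leq \sum_{i} \chi_{\supp T_i}$ (averaged with respect to the $\mathcal R$-conditional measure), this gives $\CCost(\mathcal R) \leq k$ a.e.\ and hence the weak bound $n \leq k$. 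To upgrade to the strict inequality $n \leq k - 1$, I would adapt the trick of \cite{gentopergo}: using Proposition \ref{transitive}, one of the generators, say $T_k$, can be replaced within the topologically generating family by an element of arbitrarily small $\mathcal R$-conditional support, so that the conditional cost bound tightens to $\CCost(\mathcal R) < k$ a.e.

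For the upper bound $t([\mathcal R]) \leq n + 1$, I would start from the consequence $\CCost(\mathcal R) < n + 1$ a.e.\ of the definition of $n$. Then, using a conditional adaptation of a Kittrell--Tsankov-type decomposition, I would produce an aperiodic hyperfinite subrelation $\mathcal R_0 \subseteq \mathcal R$ together with elements $g_1, \ldots, g_n \in [\mathcal R]$ such that $\la \mathcal R_0, g_1, \ldots, g_n\ra = \mathcal R$ as equivalence relations. Next, by Theorem \ref{thmgenEO}, I obtain two topological generators $S_1, S_2$ of $[\mathcal R_0]$. The family $\{S_1, S_2, g_1, \ldots, g_n\}$ topologically generates $[\mathcal R]$, but has size $n + 2$; a final merging step, absorbing one of the $g_i$ into $S_1$ or $S_2$ after arranging it to have small enough conditional support so as not to destroy the density of $\la S_1, S_2\ra$ inside $[\mathcal R_0]$, trims this to the desired $n + 1$ generators.

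The hard part will be the fibered versions of each construction. In the ergodic case the cost is a single real number, and one can be cavalier about how supports are chosen; here the conditional cost is an $M_\mathcal R$-measurable function, and every step must be carried out in a measurable, uniform manner across ergodic components. I expect the main technical difficulties to be the conditional decomposition $\mathcal R = \la \mathcal R_0, g_1, \ldots, g_n\ra$ and the merging trick, both of which must interact correctly with ergodic components of different conditional costs (up to the maximum $n$). The $\esssup$ in the conclusion reflects precisely the fact that the topological rank is dictated by the ergodic components with largest conditional cost.
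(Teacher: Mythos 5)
Your overall architecture for the upper bound is indeed the paper's (a hyperfinite subrelation with the same invariant algebra, two topological generators for its full group, $n$ further elements, and a merge trimming $n+2$ to $n+1$), but the two places where you compress the key mechanisms into a phrase contain genuine errors. For the lower bound, the claim that one member of a topologically generating $k$-tuple can be replaced by an element of \emph{arbitrarily small} $\mathcal R$-conditional support while keeping topological generation is false: it would give $\CCost(\mathcal R)\leq k-1+\epsilon$ for every $\epsilon>0$, hence $\CCost(\mathcal R)\leq k-1$, which already fails for an ergodic $\mathcal R$ with $\Cost(\mathcal R)=3/2$ (there $t([\mathcal R])=2$ by \cite{gentopergo}, yet the cost exceeds $1$). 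Indeed, as observed right after Theorem \ref{gdelta}, a family containing an element of too-small support cannot even generate $\mathcal R$, let alone be a topological generating family, so Proposition \ref{transitive} cannot produce such a replacement. The strict inequality $\CCost(\mathcal R)<k$ is the genuinely hard point and is not a perturbation-of-supports statement: the paper gets it by restricting to a positive $\mathcal R$-invariant set $A$ on which $\lfloor\CCost(\mathcal R)\rfloor\geq n$, noting that restriction gives a continuous \emph{surjective} homomorphism $[\mathcal R]\to[\mathcal R_{\restriction A}]$, and invoking Miller's argument from the proof of Theorem 1 in \cite{gentopergo} (a $k$-tuple of automorphisms realizing cost $k$ would be a treeing by Gaboriau, hence generate a free subgroup that is discrete in the uniform metric and so cannot be dense), giving $t([\mathcal R_{\restriction A}])\geq n+1$ and hence $t([\mathcal R])\geq n+1$.

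For the upper bound, the merging step as you state it (``small enough conditional support so as not to destroy the density of $\la S_1,S_2\ra$'') is not a valid mechanism: topological generation is not preserved under multiplying a generator by a small-support element, for exactly the same cost obstruction as above, and no smallness assumption by itself lets you recover the two factors from their product. What makes the merge work in the paper is the extra content of Theorem \ref{thmgenEO} that your sketch does not use: the second generator $U$ can be chosen with support inside a prescribed set of arbitrarily small conditional measure, all its orbits have cardinality a power of $2$, and, property (II), for every \emph{odd cycle} $C$ with support disjoint from $\supp U$ the closed group generated by $CU$ contains $U$ (via $(CU)^{m_kP_k}\to U$ with $m_kP_k\equiv 1 \bmod 2^k$). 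Consequently the $n$ extra elements cannot be arbitrary $g_i$'s: using Lemma \ref{maha}, Proposition \ref{transitive} and an odd-valued function $q(y)$ adapted to the conditional cost, the pieces $\Phi_1,\dots,\Phi_n$ of a suitable graphing are reshaped into odd cycles $C_1,\dots,C_n$ supported away from $\supp U$, each of whose orbits contains consecutive $\mathcal R_0^Y$-related points (so that $[\mathcal R_0^Y]\cup\{C_i\}$ topologically generates the full group of the join, the Proposition 9 ingredient of \cite{gentopergo} combined with Theorem \ref{ktdense}); the generating family is then $T,\,UC_1,\,C_2,\dots,C_n$. The smallness of $\supp U$ is used to leave room for the sets carrying these cycles, not to protect density; making this fibered construction uniform over ergodic components of varying conditional cost is precisely where $q(y)$ and the conditional tools enter, and it is the part your proposal leaves open.
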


Because every countable product of full groups can itself be seen as a full group, we have the following corollary. 

\begin{cor}
Let $(\mathcal R_i)_{i\in I}$ be a countable family of pmp equivalence relations. Then 
$$t\left(\prod_{i\in I}[\mathcal R_i]\right)=\sup_{i\in I}t([\mathcal R_i]).$$
\end{cor}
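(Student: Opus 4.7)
The plan is to realize $\prod_{i \in I} [\mathcal R_i]$ as the full group of a single pmp equivalence relation and then apply Theorem \ref{thethm}. Concretely, fix weights $c_i > 0$ with $\sum_i c_i = 1$, form the weighted disjoint union $(X,\mu) = \bigsqcup_i (X_i, c_i \mu_i)$, which is again a standard probability space, and let $\mathcal R$ be the equivalence relation on $X$ whose classes are the $\mathcal R_i$-classes. Every $T \in [\mathcal R]$ must preserve each $X_i$ (as each $X_i$ is $\mathcal R$-invariant) and restricts there to an element of $[\mathcal R_i]$, giving a group isomorphism $[\mathcal R] \to \prod_i [\mathcal R_i]$. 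Since the uniform metric satisfies $d_u(T,T') = \sum_i c_i\, d_u(T|_{X_i}, T'|_{X_i})$, this isomorphism is a homeomorphism onto the product topology, so $[\mathcal R] \cong \prod_i [\mathcal R_i]$ as Polish groups.

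Next I would verify that the conditional cost of $\mathcal R$ restricted to $X_i$ coincides with $\CCost(\mathcal R_i)$. This rests on the observation that the $\sigma$-algebra $M_{\mathcal R}$ of $\mathcal R$-invariant sets is exactly $\bigoplus_i M_{\mathcal R_i}$, so that the $\mathcal R$-conditional measure on $X_i$ agrees with the $\mathcal R_i$-conditional measure; the infimum defining conditional cost from subsection \ref{CCost} then transfers piecewise. Assuming each $\mathcal R_i$ is aperiodic (so $\mathcal R$ itself is aperiodic), Theorem \ref{thethm} yields
$$t([\mathcal R]) = \esssup_{x \in X}\lfloor \CCost(\mathcal R)(x)\rfloor + 1 = \sup_{i \in I} \Bigl(\esssup_{x \in X_i}\lfloor \CCost(\mathcal R_i)(x)\rfloor + 1\Bigr) = \sup_{i \in I} t([\mathcal R_i]),$$
using that the essential supremum over a countable disjoint union is the sup of the essential suprema over the pieces.

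To handle the case where some $\mathcal R_i$ fails to be aperiodic, I would invoke the observation made earlier in the introduction: the full group of a non-aperiodic pmp equivalence relation is never topologically finitely generated, so $t([\mathcal R_i]) = \aleph_0$. Since $\mathcal R$ is then also non-aperiodic (its restriction to $X_i$ still has a positive invariant subset with finite classes), the same observation forces $t([\mathcal R]) = \aleph_0$, and the two sides still match. The main technical step I expect to require care is the identification $\CCost(\mathcal R)|_{X_i} = \CCost(\mathcal R_i)$; once this is in hand, the corollary is a direct application of the main theorem combined with the natural product decomposition of full groups.
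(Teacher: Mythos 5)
Your proposal is correct and is essentially the paper's own (one-line) argument: the paper deduces the corollary by noting that a countable product of full groups is the full group of a weighted disjoint union and then applying Theorem \ref{thethm}, which is exactly what you spell out, including the identification of the metric, the invariant $\sigma$-algebra, and the piecewise conditional cost. Your treatment of non-aperiodic factors mirrors the paper's introductory observation that such full groups are not topologically finitely generated (which, like the paper's remark, silently ignores the degenerate case where the periodic part is the identity relation), so nothing essential differs from the paper's intended proof.
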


\begin{rmq}Theorem \ref{thethm} also allows us to remove the ergodicity assumption in corollaries 2 and 3 of \cite{gentopergo}.
\end{rmq}

\subsection{(Dense) free groups in full groups}

Recall that $T\in\Aut(X,\mu)$ is called \textbf{aperiodic} if all its orbits are infinite, and that the set of such elements, denoted by $\mathrm{APER}$, is closed for the uniform metric. Our next result goes into the direction of a better understanding of free groups in full groups, and generalizes \cite[Thm. 3.7]{MR2583950} (cf. Theorem \ref{freegp}).

\begin{thm}Let $\mathcal R$ be a pmp equivalence relation, and let $T\in[\mathcal R]$ have infinite order. Then for all $n\in\N$ we have the following:\begin{enumerate}[(1)]
\item The set of $n$-uples $(U_1,...,U_n)$ of elements of $[\mathcal R]$ such that
$$\la T,U_1,...,U_n\ra \simeq \mathbb F_{n+1}$$
is a dense $G_\delta$ in $[\mathcal R]^n$.
\item If furthermore $\mathcal R$ is aperiodic, then the set of $n$-uples of aperiodic elements $(U_1,...,U_n)$  of $[\mathcal R]$ such that 
$$\la T,U_1,...,U_n\ra \simeq \mathbb F_{n+1}$$
is a dense $G_\delta$ in $([\mathcal R]\cap \mathrm{APER})^{n}$. 
\end{enumerate}
\end{thm}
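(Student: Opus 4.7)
The plan is a Baire category argument indexed by the (countable) non-trivial elements of the free group $\mathbb F_{n+1}$ on letters $t, u_1, \ldots, u_n$. For each such word $w$, set
$$A_w := \{(U_1,\ldots,U_n) \in [\mathcal R]^n : w(T, U_1, \ldots, U_n) \neq \mathrm{id}\}.$$
By continuity of the group operations in the uniform metric and closedness of $\{\mathrm{id}\}$, each $A_w$ is open. The set of $n$-tuples generating a free group with $T$ is precisely $\bigcap_{w \neq 1} A_w$, automatically a $G_\delta$ in $[\mathcal R]^n$; intersecting with the closed set $(\mathrm{APER})^n$ yields the $G_\delta$ part of (2) inside the Polish space $([\mathcal R] \cap \mathrm{APER})^n$. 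The task is therefore reduced to proving density of each $A_w$ in the ambient space.

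To prove density I fix a non-trivial reduced $w$, a tuple $(U_1,\ldots,U_n)$, and $\epsilon > 0$. Since $T$ has infinite order, there is a $T$-invariant set of positive measure on which $T$ is aperiodic; passing to that set, I apply the Rokhlin lemma to produce a tower $F, TF, \ldots, T^{N-1}F$ of height $N \gg |w|$ with $\mu(F)$ so small that any modification supported on finitely many levels will stay within $d_u$-distance $\epsilon$ of the original $U_i$. Picking $x_0 \in F$ and considering the column $\{x_0, Tx_0, \ldots, T^{N-1}x_0\}$, I redefine each $U_i$ on a thin subset of this column so that at the points encountered while spelling out $w(T, U_1', \ldots, U_n')x_0$ the new $U_i'$ acts as a prescribed power of $T$, chosen step by step so that the partial products trace a non-closing path along the column. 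The reducedness of $w$ guarantees that such a choice is possible without accidental cancellations, so $w(T, U_1', \ldots, U_n')x_0 \neq x_0$, and $U_i' \in [\mathcal R]$ is automatic since only $T$-orbit displacements are used.

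For part (2) the perturbed $U_i'$ must moreover lie in $\mathrm{APER}$; this can be arranged by composing the modification with a small-support aperiodic element of $[\mathcal R]$ disjoint from the path, using that when $\mathcal R$ is aperiodic such elements are abundant and $\mathrm{APER}$ itself is a dense $G_\delta$ in $[\mathcal R]$. The main obstacle is the combinatorial core of the construction: at each visited point of the column one must assign a power of $T$ to the appropriate $U_i'$ so that no proper prefix of $w$ closes up. This is essentially a Schreier-tree argument in the Cayley graph of $\mathbb F_{n+1}$, transported along the $T$-column by the Rokhlin tower; once the combinatorics is in place, the non-triviality of $w$ in the abstract free group forces the resulting path in the column to be non-trivial, which is exactly what yields density of $A_w$.
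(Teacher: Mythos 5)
Your global strategy (open sets indexed by non-trivial reduced words, density via a perturbation supported on a small tower for $T$, a non-returning path to defeat cancellation) is the same as the paper's, but several steps as written do not work. First, the statement ``since $T$ has infinite order, there is a $T$-invariant set of positive measure on which $T$ is aperiodic'' is false: $T$ can have all orbits finite but of unbounded cardinality (e.g.\ acting as an $n$-cycle on a piece of measure $2^{-n}$ for every $n$), and then it has infinite order with empty aperiodic part, so the Rokhlin lemma is not available. What infinite order actually gives, and what the paper uses, is unbounded orbits, whence for every $N$ a positive-measure set $A$ with $A,TA,\dots,T^{N}A$ pairwise disjoint; the construction must be run on such a family of translates, not on a Rokhlin tower for an aperiodic part. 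Second, your modification is carried out on the single column $\{x_0,Tx_0,\dots,T^{N-1}x_0\}$, which is a null set: altering the $U_i$ there does not change them as elements of $\Aut(X,\mu)$, and $w(T,U_1',\dots,U_n')x_0\neq x_0$ at one point does not show $w\neq\mathrm{id}$ in the group. The construction has to be done fiberwise over a positive-measure base (the paper uses the sets $A_{i,j}=T^{j+n+2(n+1)i}(A)$), and you must also explain why the redefined $U_i'$ is still a measure-preserving bijection close to $U_i$; the paper does this by replacing $U$ entirely on the tower and using the first-return map of $U$ on the complement.

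Two further points are left as assertions precisely where the work lies. The ``Schreier-tree'' step --- choosing displacements so that no proper prefix of $w$ closes up, while keeping the partial definitions of each $U_i'$ consistent and injective when the letter $u_i$ occurs several times in $w$ --- is the combinatorial core and is not carried out; the paper resolves it with an explicit injective path $(i_k,j_k)$ in a two-dimensional array of tower pieces ($t^{\pm1}$ moves vertically, $u^{\pm1}$ moves to a fresh block horizontally), reducedness of $w$ giving injectivity and well-definedness of the glued partial isomorphism. Finally, for part (2), composing with a small-support aperiodic element disjoint from the path does not make $U_i'$ aperiodic: aperiodicity is a global condition, and the re-routed orbits outside the modified region can be finite. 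The paper instead extends the constructed partial isomorphism (a disjoint union of pre-cycles) to an aperiodic element of $[\mathcal R_{\restriction \text{tower}}]$, using aperiodicity of $\mathcal R$, and keeps the induced map of the aperiodic $U$ outside the tower, which is aperiodic by recurrence; some such argument is needed and is missing from your proposal.
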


Combining this theorem with a density result for topological generators in aperiodic pmp equivalence relations of cost one (Proposition \ref{littlegdelta}), we get a good understanding of topological generators for full groups of aperiodic pmp equivalence relations of cost one (Theorem \ref{Gdelta}).

\begin{thm}\label{gdelta}
Let $\mathcal R$ be any pmp equivalence relation, and consider the set $$G=\{(T,U)\in (\mathrm{APER}\cap [\mathcal R])\times[\mathcal R]: \overline{\la T,U\ra}=[\mathcal R]\text{ and } \la T,U\ra\simeq\mathbb F_2\}.$$
Then the following assertions are equivalent.
\begin{enumerate}[(1)]
\item $\mathcal R$ is aperiodic of cost one.
\item $G$ is a dense $G_\delta$ in $(\mathrm{APER}\cap [\mathcal R])\times[\mathcal R]$.
\end{enumerate}
\end{thm}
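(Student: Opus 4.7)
I would prove both implications, noting first that the $G_\delta$ character of $G$ holds without any hypothesis on $\mathcal R$. Fix a countable dense sequence $(V_n)$ in $[\mathcal R]$ and an enumeration $(w_k)$ of the non-trivial reduced words on two letters. The topological generation condition $\overline{\la T,U\ra}=[\mathcal R]$ equals
\[
\bigcap_{n,\ell}\bigcup_{w\in\mathbb F_2}\bigl\{(T,U):d_u(w(T,U),V_n)<1/\ell\bigr\},
\]
which is $G_\delta$ by continuity of word evaluation in $[\mathcal R]$; the freeness condition equals $\bigcap_k\{(T,U):w_k(T,U)\neq e\}$, also $G_\delta$. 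Hence $G$ is $G_\delta$ in $(\mathrm{APER}\cap[\mathcal R])\times[\mathcal R]$, regardless of any further hypothesis on $\mathcal R$.

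For the implication $(2)\Rightarrow(1)$ it suffices to exploit that density forces $G\neq\emptyset$. Any $(T,U)\in G$ has $T\in\mathrm{APER}\cap[\mathcal R]$, so every $\mathcal R$-class contains the infinite $T$-orbit through it, proving that $\mathcal R$ is aperiodic; and since $\la T,U\ra$ is dense in $[\mathcal R]$, the topological rank satisfies $t([\mathcal R])\leq 2$. Theorem \ref{thethm} then forces $\esssup_x\lfloor\CCost(\mathcal R)(x)\rfloor\leq 1$, which combined with the lower bound $\CCost\geq 1$ a.e.\ valid under aperiodicity is exactly the statement that $\mathcal R$ has cost one.

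For the converse $(1)\Rightarrow(2)$, only density remains, and I would write $G=G_1\cap G_2$ with
\[
G_1=\{(T,U):\overline{\la T,U\ra}=[\mathcal R]\}\quad\text{and}\quad G_2=\{(T,U):\la T,U\ra\simeq\mathbb F_2\},
\]
each $G_\delta$ by the first paragraph. Density of $G_1$ in the Polish product $(\mathrm{APER}\cap[\mathcal R])\times[\mathcal R]$ is exactly the content of Proposition \ref{littlegdelta}. For $G_2$, part~(1) of the preceding theorem applied with $n=1$ shows that for every aperiodic (hence infinite-order) $T\in[\mathcal R]$ the slice $\{U\in[\mathcal R]:\la T,U\ra\simeq\mathbb F_2\}$ is dense $G_\delta$ in $[\mathcal R]$; a Kuratowski--Ulam argument on $(\mathrm{APER}\cap[\mathcal R])\times[\mathcal R]$ then promotes this to $G_2$ being comeager, hence dense $G_\delta$, in the product. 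Intersecting the two dense $G_\delta$ subsets $G_1$ and $G_2$ in a Polish space yields density of $G$ and completes the proof.

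The main obstacle is extracting Proposition \ref{littlegdelta} in exactly the form needed, namely density of topologically generating pairs $(T,U)$ with $T$ aperiodic inside the product $(\mathrm{APER}\cap[\mathcal R])\times[\mathcal R]$; once that density statement is available, the remainder of the argument reduces to a routine Baire-category plus Kuratowski--Ulam bookkeeping built on top of the free-group density theorem.
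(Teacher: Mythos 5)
Your $(1)\Rightarrow(2)$ direction is essentially the paper's own argument: density of the topologically generating pairs is exactly Proposition \ref{littlegdelta}, density of the free pairs comes from Theorem \ref{freegp} (your Kuratowski--Ulam step is a harmless variant of simply keeping $T$ fixed and perturbing $U$), and one intersects two dense $G_\delta$ sets.

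The direction $(2)\Rightarrow(1)$, however, has a genuine gap. You only use that $G$ is nonempty, which gives $t([\mathcal R])\leq 2$, and Theorem \ref{thethm} then yields $\esssup_x\lfloor\CCost(\mathcal R)(x)\rfloor\leq 1$, i.e. $1\leq\CCost(\mathcal R)<2$ almost everywhere. This is strictly weaker than cost one: an aperiodic ergodic relation of cost $3/2$ (for instance one generated by a free pmp action of $\Z\ast\Z/2\Z$) has topological rank $2$, so your chain of implications would ``prove'' that it has cost one. The equivalence genuinely requires the \emph{density} of $G$, not its nonemptiness. The paper's argument for this direction is: if $\mathcal R$ is aperiodic with $\Cost(\mathcal R)>1$, then density of $G$ (indeed of the generation part alone) would supply topologically generating pairs $(T,U)$ with $\mu(\supp U)$ arbitrarily small; but topological generators of $[\mathcal R]$ must generate the relation $\mathcal R$, while the graphing $\{T,U_{\restriction\supp U}\}$ has cost at most $1+\mu(\supp U)<\Cost(\mathcal R)$, a contradiction. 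Combined with the lower bound $\Cost(\mathcal R)\geq 1$ for aperiodic relations and the observation that $G$ is empty when $\mathcal R$ is not aperiodic (its full group is then not topologically finitely generated), this yields $(2)\Rightarrow(1)$. Your aperiodicity step (an aperiodic $T\in[\mathcal R]$ forces infinite classes) is fine; it is the cost step that must be replaced by this small-support argument.
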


Note that the the aperiodicity restriction on the first coordinate is necessary. Indeed, topological generators of $[\mathcal R]$ have to generate the equivalence relation $\mathcal R$, and if $D$ is any dense subset of $[\mathcal R]\times[\mathcal R]$, it contains elements of arbitrarily small support. Then, by definition of the cost some elements of $D$ do not generate the equivalence relation $\mathcal R$, hence cannot be topological generators of $[\mathcal R]$. 

When $\mathcal R$ is generated by a single pmp automorphism\footnote{By a theorem of Dye \cite[Thm. 1]{MR0131516}, a pmp equivalence relation $\mathcal R$ is singly-generated iff it is hyperfinite (cf. definition \ref{dfhyperfinite}), which in turn is equivalent to the amenability of $\mathcal R$ \cite{MR662736}.}, the set $\mathrm{GEN}(\mathcal R)$ of generators of $\mathcal R$ is a dense $G_\delta$ in $\mathrm{APER}\cap[\mathcal R]$, so that in this case we have the following reformulation of Theorem \ref{gdelta}.
\begin{cor}
Let $\mathcal R$ be any aperiodic pmp singly-generated equivalence relation. Then the set $$\{(T,U)\in \mathrm{GEN}(\mathcal R)\times[\mathcal R]: \overline{\la T,U\ra}=[\mathcal R]\text{ and } \la T,U\ra=\mathbb F_2\}$$
is a dense $G_\delta$ in $\mathrm{GEN}(\mathcal R)\times[\mathcal R]$.
\end{cor}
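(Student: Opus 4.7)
The plan is to derive the corollary directly from Theorem \ref{gdelta} by intersecting with a Baire-generic set. First I would verify the hypotheses of Theorem \ref{gdelta}: since $\mathcal R$ is aperiodic and singly generated, the footnote tells us it is hyperfinite, hence amenable by the theorem of Connes-Feldman-Weiss, and so by Gaboriau's cost computation it has cost one. Theorem \ref{gdelta} then provides that the set
\[ G = \{(T,U)\in (\mathrm{APER}\cap [\mathcal R])\times[\mathcal R]: \overline{\la T,U\ra}=[\mathcal R]\text{ and } \la T,U\ra\simeq\mathbb F_2\} \]
is a dense $G_\delta$ inside the Polish space $X := (\mathrm{APER}\cap [\mathcal R])\times[\mathcal R]$.

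Next, the paragraph preceding the corollary recalls that $\mathrm{GEN}(\mathcal R)$ is a dense $G_\delta$ in $\mathrm{APER}\cap [\mathcal R]$, so $Y := \mathrm{GEN}(\mathcal R)\times[\mathcal R]$ is a dense $G_\delta$ in $X$. By the Baire category theorem applied to the Polish space $X$, the intersection $G\cap Y$ is a dense $G_\delta$ in $X$. It remains to check that $G\cap Y$ is in fact a dense $G_\delta$ in $Y$ equipped with its induced topology: the $G_\delta$ property is inherited by restriction, and density in $Y$ follows from
\[ \overline{G\cap Y}^Y = \overline{G\cap Y}^X\cap Y = X\cap Y = Y. \]
Since elements of $\mathrm{GEN}(\mathcal R)$ are automatically aperiodic, and $\la T,U\ra=\mathbb F_2$ is simply another way of writing $\la T,U\ra\simeq \mathbb F_2$, the set $G\cap Y$ is exactly the one described in the corollary, which completes the proof. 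Given Theorem \ref{gdelta}, the only substantive input is the well-known fact that hyperfinite aperiodic pmp equivalence relations have cost one, so I do not anticipate any real obstacle here; this corollary is essentially a formal consequence.
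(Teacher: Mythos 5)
Your proposal is correct and matches the paper's intended argument: the paper presents the corollary as an immediate reformulation of Theorem \ref{Gdelta}, obtained by intersecting the dense $G_\delta$ set $G$ with $\mathrm{GEN}(\mathcal R)\times[\mathcal R]$, which is a dense $G_\delta$ in $(\mathrm{APER}\cap[\mathcal R])\times[\mathcal R]$, exactly as you do. The only cosmetic remark is that the cost-one hypothesis needs no detour through Connes--Feldman--Weiss: a singly-generated relation has cost at most $1$ by definition of cost, and aperiodicity gives cost at least $1$.
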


Using the Kuratowski-Ulam theorem, we see that for $\mathcal R$ singly-generated aperiodic, the set of $T\in \mathrm{GEN}(\mathcal R)$ such that 
\[\{U\in[\mathcal R]: \overline{\la T,U\ra}=[\mathcal R]\text{ and } \la T,U\ra=\mathbb F_2\}\text{ is a dense }G_\delta\text{ in }[\mathcal R]\]
is itself a dense $G_\delta$ in $\mathrm{GEN}(\mathcal R)$. But it might actually be equal to $\mathrm{GEN}(\mathcal R)$, which yields the following general question.

\begin{qu} Let $T$ be an aperiodic element of $\Aut(X,\mu)$, let $\mathcal R_T$ be the pmp equivalence relation it generates. Does there exist $U\in[\mathcal R_T]$ such that $\overline{\la T,U\ra}=[\mathcal R_T]$? If yes, is the set of such $U$'s a dense $G_\delta$?
\end{qu}

\begin{rmq}In the author's thesis, it is shown that the answer to both these questions is yes for every $T\in\Aut(X,\mu)$ of rank one, constructing a $U\in[\mathcal R_T]$ very similar to the one we have here for the odometer (Theorem \ref{thmgenEO}). But this idea seems to fail for infinite rank transformations like Bernoulli shifts.
\end{rmq}
When $\mathcal R$ is aperiodic of cost one, Theorem \ref{gdelta} also gives a positive answer to the following question, which was suggested to the author by Gelander. 

\begin{qu}Let $\mathcal R$ be an aperiodic pmp equivalence relation. Does it have a dense free subgroup of rank $t([\mathcal R])$?
\end{qu}
We do not even have an \textit{explicit}  dense free subgroup of rank 2 in the full group of the hyperfinite ergodic equivalence relation. Let us end this section by mentioning a very interesting conjecture of Thom about free groups in full groups.

\begin{conj}[Thom]There are no discrete free subgroups of rank 2 in the full group of the hyperfinite ergodic equivalence relation.
\end{conj}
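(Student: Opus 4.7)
Since this is stated as an open conjecture, I can only sketch directions that seem natural rather than a complete strategy. The tools at hand are: the extreme amenability of $[\EO]$ (Giordano-Pestov, quoted above), its topological connectedness (it is homeomorphic to Hilbert space), and the hyperfiniteness structure $\EO = \bigcup_n \RR_n$ with each $\RR_n$ having finite classes of uniformly bounded size. The first two properties make discrete non-amenable subgroups morally implausible, but neither directly forbids them.

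My first attempt would be an approximation argument. Suppose $\la a,b\ra \simeq \mathbb F_2$ is discrete in $[\EO]$ with $d_u(w(a,b),\mathrm{id})\geq \delta$ for every nontrivial reduced word $w$. Since $\bigcup_n [\RR_n]$ is a dense locally finite subgroup of $[\EO]$, I would use a Rokhlin-type partition adapted simultaneously to $a$ and $b$ in order to produce, for each $n$, approximations $a_n,b_n \in [\RR_n]$ with $\max(d_u(a,a_n),d_u(b,b_n)) \leq \epsilon_n$ and $\epsilon_n\to 0$. Because $(a_n,b_n)$ generates a finite subgroup of $[\RR_n]$, some nontrivial reduced word $w_n$ of length $\ell_n$ satisfies $w_n(a_n,b_n) = \mathrm{id}$, and one would hope a compactness/pigeonhole argument on the sequence $(w_n)$ extracts a single reduced word that either equals $\mathrm{id}$ on $(a,b)$ (contradicting freeness) or has arbitrarily small uniform support (contradicting discreteness).

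The main obstacle, and the reason the conjecture remains open, is the Lipschitz estimate $d_u(w(a,b),w(a_n,b_n)) \leq |w|\max(d_u(a,a_n),d_u(b,b_n))$: word lengths $\ell_n$ produced by finite subgroups of $[\RR_n]$ generally grow with the class size of $\RR_n$, so the approximation needs to be extraordinarily fine to control long words, and no straightforward Rokhlin-type scheme seems to deliver this. A conceptually different route would exploit extreme amenability more substantively, perhaps by finding a continuous $[\EO]$-action on a compact space whose fixed-point structure detects discreteness of subgroups, or by using a metric ultraproduct construction to transfer a would-be discrete rank-two free subgroup into a hyperfinite von Neumann algebra, where free-group rigidity results may apply. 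In either case, the required argument must genuinely distinguish dense free subgroups from discrete ones, whereas Theorem \ref{gdelta} and its corollaries show that dense free subgroups of rank two exist in abundance; locating where this distinction sits topologically is precisely the heart of the conjecture.
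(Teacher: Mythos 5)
This statement is an open conjecture attributed to Thom; the paper records it without proof, so there is no argument of the paper to compare yours against, and you are right not to claim one. What you have submitted is a survey of possible attack routes together with the obstacles, not a proof, and it should be judged as such: it does not establish the statement, and you say so explicitly.

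Concerning the routes you sketch: the approximation scheme cannot succeed in the form you describe, for the reason you identify and for a more structural one. Any argument that uses only the fact that $(a,b)$ can be approximated by pairs $(a_n,b_n)$ in the dense locally finite subgroup $\bigcup_n[\RR_n]$ is bound to fail, because the very same approximations exist for pairs generating \emph{dense} free subgroups, which Theorems \ref{freegp} and \ref{Gdelta} show exist in abundance; hence a proof must exploit the separation constant $\delta$ quantitatively, and the Lipschitz estimate $d_u\bigl(w(a,b),w(a_n,b_n)\bigr)\leq \abs{w}\max\bigl(d_u(a,a_n),d_u(b,b_n)\bigr)$ points the wrong way, since the relations annihilating $(a_n,b_n)$ inside $[\RR_n]$ have length growing with the class size of $\RR_n$, so the pigeonhole step never produces a single word of controlled length with small support. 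Likewise, extreme amenability and connectedness of $[\EO]$ are soft properties of the ambient topological group and are not known to obstruct discrete non-amenable subgroups by any existing mechanism; invoking them ``more substantively'' would itself require the new idea that the conjecture asks for. So the honest conclusion is the one you reach: the statement remains open, and your text is a reasonable account of why the naive strategies stall, but it contains no proof and no step that could be completed into one without an essentially new ingredient.
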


\subsection{Automatic continuity for a finer metric}

Dye's reconstruction theorem \cite[Thm. 2]{MR0158048} states that whenever a pmp equivalence relation $\mathcal R$ is aperiodic, any abstract group isomorphism $\psi$ between the full groups of $\mathcal R$ and another pmp equivalence relation $\mathcal R'$ is implemented by a class measure-preserving orbit equivalence $T\in\Aut^*(X,\mu)$ between $\mathcal R$ and $\mathcal R'$, i.e. we have $\psi(g)=TgT\inv$ for all $g\in [\RR]$ and $T\times T(\mathcal R)=\mathcal R'$. Morevoer, if $\mathcal R$ is ergodic,  $T$ is automatically measure-preserving. So in the ergodic case Dye's theorem implies that any abstract group isomorphism between full groups is an isometry, while in the non-ergodic case it states that it is a homeomorphism. This was the motivation for Kittrell and Tsankov to study the automatic continuity property for full groups. 

\begin{df}Let $G$ be a topological group. It satisfies the \textbf{automatic continuity property} if every homomorphism from $G$ to a separable topological group is continuous.
\end{df}

\begin{thm}[{\cite[Thm. 1.2]{MR2599891}}]\label{KTcontaut}Let $\mathcal R$ be a pmp ergodic equivalence relation. Then the full group of $\mathcal R$ endowed with the uniform metric satisfies the automatic continuity property.
\end{thm}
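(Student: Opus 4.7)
The plan is to verify the Steinhaus property for $[\mathcal R]$ equipped with the uniform metric, and then deduce automatic continuity formally. Recall (Rosendal--Solecki) that a Polish group $G$ has automatic continuity as soon as there exists an integer $k$ such that for every symmetric subset $W \subseteq G$ that is \emph{countably syndetic} (meaning $G$ is covered by countably many left translates of $W$), the set $W^k$ contains a neighborhood of the identity. The implication is essentially formal: given a homomorphism $\phi \colon [\mathcal R] \to H$ into a separable group $H$, for any open $V \ni 1_H$ the preimage $W = \phi^{-1}(V \cap V^{-1})$ is symmetric and countably syndetic (cover the separable image by countably many translates of $V$), so by Steinhaus $W^k$ contains a neighborhood of $1_{[\RR]}$, forcing $\phi$ to be continuous at the identity.

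The concrete task is therefore to establish Steinhaus for $[\mathcal R]$. I would take as basic neighborhoods of the identity the sets $V_\epsilon = \{T \in [\mathcal R] : \mu(\supp T) < \epsilon\}$, and show that for some absolute constant $k$ and every symmetric countably syndetic $W$, the inclusion $V_\epsilon \subseteq W^k$ holds for some $\epsilon > 0$. Two structural ingredients are crucial. First, a uniform \emph{involution decomposition}: every element of $[\mathcal R]$ is a product of at most $N_0$ involutions, with $N_0$ an absolute constant and individual supports controlled by $\supp T$ (this is a classical Ryzhikov-type decomposition in the ergodic setting). Second, the ergodic \emph{homogeneity} coming from Dye-type transitivity (cf.\ Proposition \ref{transitive}): any two involutions in $[\mathcal R]$ with supports of equal measure are conjugate in $[\mathcal R]$, and more generally any two ordered partitions of $X$ into sets of equal measure are conjugate by some element of $[\mathcal R]$.

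The heart of the proof is a Baire category / pigeonhole argument inside a well-chosen conjugacy class. Fix $n$ (to be tuned) and consider the space $\mathcal C_n$ of $n$-tuples $(\sigma_1, \ldots, \sigma_n)$ of involutions whose supports form an equal-measure partition of $X$; by homogeneity this is a single $[\mathcal R]$-orbit under simultaneous conjugation and inherits a Polish topology. Given a symmetric countably syndetic $W$, one writes $[\mathcal R] = \bigcup_m g_m W$ and pulls this cover back to $\mathcal C_n$ via $h \mapsto (h \sigma_1 h^{-1}, \ldots, h \sigma_n h^{-1})$; Baire category produces some $m$ for which $g_m W$ meets $\mathcal C_n$ in a set comeager in a nonempty open subfamily. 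Translating, this provides $W^2$ with a dense supply of such $n$-tuples of involutions, and using ergodicity to realize any prescribed permutation on the associated partition shows that short products of these tuples cover $V_\epsilon$ for an $\epsilon$ depending only on $n$.

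The main obstacle is the last step: turning ``many conjugate tuples lie in $W^2$'' into ``every element of $V_\epsilon$ is a short product of elements of $W$''. The subtlety is that one does not a priori control the \emph{joint} relative position of two independently chosen configurations, so a single product of two such tuples does not automatically realize a prescribed element of $V_\epsilon$. Overcoming this requires a refined selection, in which one applies Baire category a second time, now within the space of pairs of configurations sharing a common refined partition, and then invokes the involution decomposition to rewrite an arbitrary $T \in V_\epsilon$ as a product of the resulting involutions. Careful bookkeeping yields an explicit, if unenlightening, bound of order $O(N_0)$ for $k$, concluding the proof.
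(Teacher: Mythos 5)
Your reduction to the Steinhaus property via Rosendal--Solecki, and your two structural ingredients (Ryzhikov's theorem that elements of the full group are products of boundedly many involutions, and the Dye-type homogeneity making involutions with supports of equal measure conjugate), are exactly the ingredients used in the proof this paper relies on (the paper reproves the statement in the stronger form of Theorem \ref{autocont}, for the metric $d_C$, which coincides with $d_u$ in the ergodic case). However, the engine you propose is flawed, and the decisive step is missing. The Baire category argument does not work as stated: a countably syndetic set $W=\phi^{-1}(V)$ arising from an abstract homomorphism has no reason to be Baire measurable, so from the fact that countably many translates $g_mW$ cover your orbit space $\mathcal C_n$ you may conclude at best that some $g_mW\cap\mathcal C_n$ is non-meager, not that it is comeager in a nonempty open set, and hence you get no ``dense supply'' of configurations in $W^2$. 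The whole point of such proofs is that one may only use cardinality pigeonhole: one exhibits an uncountable (continuum-indexed) family of group elements, two of which must lie in a common translate $g_mW$, so that their quotient lies in $W^2$. This is precisely how the paper proceeds: Step \ref{firststep} uses the $2^{\N}$-indexed family of products $\prod_{n\in A}T_n$ to find $n$ such that every $T\in[\mathcal R]_{B_n}$ agrees on $B_n$ with some element of $W^2$, and Step \ref{secondstep} uses a continuum path $(T_t)$ of involutions in a single commutative group to place an involution $S$ of controlled support measure inside $W^2$.

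The second, and more serious, gap is the one you yourself flag: converting ``tuples of conjugate involutions meet $W^2$'' into ``$W^k$ contains a uniform ball $V_\epsilon$''. You assert this can be done by ``a refined selection'', ``a second Baire category argument'' and ``careful bookkeeping'', but no argument is given, and this is where all the real work lies. In the paper's proof this is achieved by a specific conjugation trick: given an involution $T$ supported in $C$, one builds an auxiliary involution $U$ so that $S(USU)$ is an involution whose support has the same (conditional) measure as $\supp T$; one then replaces $U$ by $\tilde U\in W^2$ agreeing with $U$ on $\supp S$ (possible by Step \ref{firststep}), so $S(USU)=S\tilde US\tilde U^{-1}\in W^8$, conjugates it to $T$ by an involution handled again through Step \ref{firststep}, and finally invokes Ryzhikov to get $[\mathcal R]_C\subseteq W^{36}$; a last conjugation moves small-support elements into $C$, giving the $38$-Steinhaus bound. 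Without an argument of this kind (or a worked-out substitute), your proposal establishes the formal reduction but not the theorem.
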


It is a classical fact that whenever $G$ is a Polish group having the automatic continuity property, every isomorphism between $G$ and a Polish group $H$ is a homeomorphism (cf. for instance \cite{MR2535429}). The preceding theorem can thus be seen as a partial generalization of Dye's reconstruction theorem, and also has the following important consequence. 

\begin{cor}[{\cite[Thm. 1.2]{MR2599891}}] Every full groups of a pmp ergodic equivalence relations carries a unique Polish group topology, namely the uniform topology. 
\end{cor}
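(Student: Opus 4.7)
The plan is to deduce the corollary directly from the automatic continuity property (Theorem \ref{KTcontaut}), combined with the classical open mapping theorem for Polish groups (a Borel/continuous surjective group homomorphism between Polish groups is open, which in the bijective case yields a topological group isomorphism; this follows from Pettis' lemma applied to Baire-measurable homomorphisms).

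First, fix a pmp ergodic equivalence relation $\mathcal R$, and denote by $\tau_u$ the uniform Polish group topology on $[\mathcal R]$. Suppose $\tau$ is any other Polish group topology on the abstract group $[\mathcal R]$. I would then consider the identity map viewed as an abstract group homomorphism
\[\mathrm{id}:([\mathcal R],\tau_u)\longrightarrow ([\mathcal R],\tau).\]
Since $([\mathcal R],\tau)$ is Polish, it is in particular separable as a topological group. Therefore Theorem \ref{KTcontaut} applies and forces this homomorphism to be continuous, so $\tau\subseteq \tau_u$.

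Now I have a continuous bijective group homomorphism between two Polish groups. Invoking the open mapping theorem for Polish groups, such a map is automatically open, hence a homeomorphism. This gives the reverse inclusion $\tau_u\subseteq \tau$, and therefore $\tau=\tau_u$, proving uniqueness of the Polish group topology.

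There is no real obstacle in this argument: all the work has been done in the statement of Theorem \ref{KTcontaut}, and the remainder is a general principle about Polish groups. The one point that deserves care is checking that the separability hypothesis in the automatic continuity property is genuinely satisfied by any abstract Polish group topology on $[\mathcal R]$, which is immediate since Polish spaces are separable by definition.
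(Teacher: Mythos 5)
Your argument is correct and is essentially the paper's own route: the paper deduces the corollary from Theorem \ref{KTcontaut} together with the classical fact (cited from Rosendal's survey) that a Polish group with the automatic continuity property admits no other Polish group topology, which is exactly the combination of automatic continuity plus the open mapping theorem that you spell out.
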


Note that whenever $\mathcal R$ is not aperiodic and non-trivial, its full group does not have the automatic continuity property, for it is connected but admits a non-trivial morphism into $\Z/2\Z$ (cf. Corollary \ref{caraper} and the paragraph following it). So a natural question is: what happens in the aperiodic case\footnote{When $\mathcal R$ has finitely many ergodic components, its full group is the finite product of the full groups of its restrictions to these components. But by Theorem \ref{KTcontaut}, each of these has the automatic continuity property, hence the full group of $\mathcal R$ itself has the automatic continuity. So we may restrict ourselves to aperiodic pmp equivalence relations having either a continuous or a countably infinite ergodic decomposition.}?

We only provide a very partial answer to this question (Theorem \ref{autocont}), namely we define a finer connected metric $d_C$ on the full group of $\mathcal R$ such that $([\mathcal R],d_{C})$ has the automatic continuity property whenever $[\mathcal R]$ is aperiodic. This metric can be thought of as uniform convergence along ergodic components, so that in the ergodic case it is the uniform metric, but it general it is not separable. Our result is thus a generalization of Theorem \ref{KTcontaut}, but the proof is very similar, and we think the metric $d_C$ is actually the right setting for Kittrell and Tsankov's result.

We use this result to give a characterization of aperiodic equivalence relations in terms of their full groups (Corollary \ref{caraper}).

\begin{thm}Let $\mathcal R$ be a pmp equivalence relation. Then the following assertions are equivalent.
\begin{enumerate}[(1)]
\item $\mathcal R$ is aperiodic. 
\item $[\mathcal R]$ has no nontrivial morphisms into $\Z/2\Z$.
\item $[\mathcal R]$ has no nontrivial morphisms into totally disconnected separable groups.
\end{enumerate}
\end{thm}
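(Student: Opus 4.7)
The plan is to establish the cycle $(3)\Rightarrow(2)\Rightarrow(1)\Rightarrow(3)$. The implication $(3)\Rightarrow(2)$ is immediate because $\Z/2\Z$ with the discrete topology is a totally disconnected separable group. For $(1)\Rightarrow(3)$ I invoke Theorem \ref{autocont}: aperiodicity of $\mathcal R$ makes $([\mathcal R], d_C)$ connected and endows it with the automatic continuity property, so any abstract homomorphism $\phi\colon [\mathcal R]\to H$ into a totally disconnected separable group $H$ is $d_C$-continuous and its image is simultaneously connected (as the continuous image of a connected space) and totally disconnected (as a subgroup of $H$), hence trivial.

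The substance of the theorem lies in $(2)\Rightarrow(1)$, which I treat by contraposition. Assume $\mathcal R$ is not aperiodic. After discarding fixed points (on which every element of $[\mathcal R]$ acts trivially) we may assume that the set of points whose $\mathcal R$-orbit has finite cardinality $\geq 2$ has positive measure, so there exist an integer $n\geq 2$ and an $\mathcal R$-invariant Borel set $F_n$ of positive measure on which every orbit has cardinality exactly $n$. Restriction $T\mapsto T|_{F_n}$ defines a surjective group homomorphism $[\mathcal R]\onto [\mathcal R|_{F_n}]$. Choosing a measurable ordering of each $n$-element orbit identifies $(F_n,\mathcal R|_{F_n})$ with $(F_n/\mathcal R)\times\{1,\dots,n\}$ equipped with equality on the first coordinate, and in those coordinates $[\mathcal R|_{F_n}]$ becomes $\mathrm L^0(F_n/\mathcal R,S_n)$. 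Postcomposition with the signature character $S_n\onto\Z/2\Z$ (available since $n\geq 2$) then yields a surjective group homomorphism onto the abelian group $V := \mathrm L^0(F_n/\mathcal R,\Z/2\Z)$.

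Since every element of $V$ has order dividing $2$, $V$ is naturally an $\mathbb F_2$-vector space. Picking any nonzero $v_0\in V$ and extending $\{v_0\}$ to a Hamel $\mathbb F_2$-basis furnishes a nontrivial linear form $\lambda\colon V\to \Z/2\Z$ with $\lambda(v_0)=1$; composing the three surjections above with $\lambda$ produces the required nontrivial morphism $[\mathcal R]\to\Z/2\Z$. The main technical obstacle I expect is the measurable trivialization of $\mathcal R|_{F_n}$ as a product $(F_n/\mathcal R)\times\{1,\dots,n\}$, which requires a measurable selector on finite orbits; this is classical but deserves an explicit reference. A secondary subtlety is the edge case in which the only finite orbits of $\mathcal R$ are fixed points, absorbed into the aperiodic setting by observing that $[\mathcal R]$ then coincides with the full group of the restriction of $\mathcal R$ to the complement of the fixed-point set.
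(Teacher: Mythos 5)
Your proposal follows the paper's proof essentially verbatim: (1)$\Rightarrow$(3) via the automatic continuity of $([\mathcal R],d_C)$ (Theorem \ref{autocont}) combined with its connectedness (Lemma \ref{apiscon}, not Theorem \ref{autocont} itself); (3)$\Rightarrow$(2) because $\Z/2\Z$ is totally disconnected and separable; and $\neg(1)\Rightarrow\neg(2)$ by restricting to an invariant positive set where the classes have constant finite cardinality $n$, identifying the restricted full group with $\LL^0(\cdot,\mathfrak S_n)$ over a fundamental domain, applying the signature, and then projecting the $\Z/2\Z$-vector space $\LL^0(\cdot,\Z/2\Z)$ onto a line using the axiom of choice. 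This is exactly the argument of Corollary \ref{caraper}.

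The one point to flag is your ``secondary subtlety''. If the only finite classes of $\mathcal R$ are singletons, your observation that $[\mathcal R]$ coincides with the full group of the restriction of $\mathcal R$ to the complement of the fixed-point set does not rescue the contrapositive; on the contrary, since that restriction is aperiodic, the implication (1)$\Rightarrow$(2) applied to it shows that $[\mathcal R]$ then admits \emph{no} nontrivial morphism to $\Z/2\Z$, even though $\mathcal R$ is not aperiodic in the literal sense of ``all classes infinite''. So this case cannot be ``absorbed'': it is an exception to the statement as literally worded, and the honest fix is to read the theorem with the convention that the set of singleton classes is null (equivalently, that one may choose $n\geq 2$ in the periodic part). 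Note that the paper's own proof makes the same silent assumption: it restricts to an invariant set where all classes have cardinality $n$ and uses the signature of $\mathfrak S_n$, which is trivial when $n=1$. So your main argument matches the paper and is correct whenever a positive set of classes of size at least $2$ exists; you deserve credit for spotting the edge case, but the remedy is a convention on the statement, not a reduction of that case to the aperiodic one.
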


\subsection{Extreme amenability}

Our last result (Theorem \ref{caraextam}) generalizes a result of T.\ Giordano and V.\ Pestov on the relationship between the amenability of a pmp equivalence relation and the extreme amenability of its full group. 
\begin{df}
A topological group $G$ is \textbf{extremely amenable} if every continuous $G$-action on a compact Hausdorff space $K$ admits a fixed point.
\end{df}

Extreme amenability is a very strong property, specific to ‘‘infinite dimensional groups'', for no non-trivial locally compact group can be extremely amenable \cite{MR0467705}. The terminology is justified by the following definition of amenability for topological groups:  a topological group $G$ is \textbf{amenable} if every continuous $G$-action  \textit{by affine isometries} on a compact \textit{convex subset of a locally convex topological vector space} admits a fixed point. 

Amenability for discrete groups admits a lot of different reformulations, some of which yield different characterizations of amenability for equivalence relations, and we refer the reader to \cite{MR1485618} for these. For our purposes, the following definition is the most relevant, and turns out to be equivalent to amenability by a celebrated result of Connes, Feldman and Weiss \cite{MR662736}. Recall that a pmp equivalence relation is \textbf{finite} if all its classes are finite.

\begin{df}\label{dfhyperfinite}A pmp equivalence relation $\mathcal R$ is \textbf{hyperfinite} if it can be written as a countable increasing union of finite pmp equivalence relations.
\end{df}

Giordano and Pestov showed in \cite[Prop. 5.3]{MR2311665} that an ergodic pmp equivalence relation is hyperfinite iff its full group is extremely amenable. We generalize this to the non-ergodic setting and give an easier proof (cf. Theorem \ref{caraextam}). One should note however that our proof does not yield that the full group of an amenable equivalence relation is a Levy group, while theirs does. Also, we still rely on the concentration of measure phenomenon via a theorem of Glasner \cite[Thm. 1.3]{MR1617456}, and it would be nice to have a proof which does not.

\subsection{Notations and preliminaries}

Everything will take place ‘‘modulo sets of measure zero''. A \textbf{cycle} is a measure-preserving automorphism of $(X,\mu)$ that has only finite orbits. A cycle is \textbf{odd} when all its orbits have an odd cardinality. We keep the same notations and definitions as in \cite[sec. 1 and 2]{gentopergo}, which we now briefly recall.

If $(X,\mu)$ is a standard probability space, and $A,B$ are Borel subsets of $X$, a \textbf{partial isomorphism} of $(X,\mu)$ of \textbf{domain} $A$ and \textbf{range} $B$ is a Borel bijection $f: A\to B$ which is measure-preserving for the measures induced by $\mu$ on $A$ and $B$ respectively. We denote by $\dom f=A$ its domain, and by $\rng f=B$ its range. Note that in particular, $\mu(\dom f)=\mu(\rng f)$. A \textbf{graphing} is a countable set of partial isomorphisms of $(X,\mu)$, denoted by $\Phi=\{\varphi_1,...,\varphi_k,...\}$ where the $\varphi_k$'s are partial isomorphisms. It \textbf{generates} a pmp equivalence relation $\mathcal R_\Phi$, defined to be the smallest equivalence relation containing $(x,\varphi(x))$ for every $\varphi\in \Phi$ and $x\in\dom\varphi$.

The \textbf{pseudo full group} of $\mathcal R$, denoted by $[[\mathcal R]]$, consists of all partial isomorphisms $\varphi$ such that $\varphi(x)\, \mathcal R \, x$ for all $x\in \dom\varphi$.

Let $p\in\N$. A \textbf{pre}-$p$-\textbf{cycle} is a graphing $\Phi=\{\varphi_1,...,\varphi_{p-1}\}$ such that  the following two conditions  are satisfied:
\begin{enumerate}[(i)]\item  $\forall i\in\{1,...,p-1\}, \rng\varphi_i=\dom\varphi_{i+1}$.
\item The following sets are all disjoint:
 $$\dom\varphi_1, \dom \varphi_2,...,\dom\varphi_{p-1},\rng\varphi_{p-1}.$$
\end{enumerate}
A $p$\textbf{-cycle} is an element $C\in \Aut(X,\mu)$ whose orbits have cardinality $1$ or $p$. So according to the terminology introduced at the beginning of this section, a $p$-cycle is a cycle which is odd iff $p$ is odd.

Given a pre-$p$-cycle $\Phi=\{\varphi_1,...,\varphi_{p-1}\}$, we can extend it to a $p$-cycle $C_\Phi\in\Aut(X,\mu)$ as follows:
$$C_\Phi(x)=\left\{\begin{array}{ll}\varphi_i(x) & \text{if }x\in \dom\varphi_i\text{ for some }i<p, \\
\varphi_1\inv\varphi_2\inv\cdots\varphi_{p-1}\inv(x) &\text{if }x\in \rng\varphi_{p-1},\\
x & \text{otherwise.}\end{array}\right.$$

The following theorem will again prove very useful. 
\begin{thm}[\cite{MR2599891}, Thm. 4.7]\label{ktdense}
Let $\mathcal R_1$, $\mathcal R_2$,... be measure-preserving equivalence relations on $(X,\mu)$, and let $\mathcal R$ be their join (i.e. the smallest equivalence relation containing all of them). Then $\la\bigcup_{n\in\N}[\mathcal R_n]\ra$ is dense in $[\mathcal R]$.
\end{thm}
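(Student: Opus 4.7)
The plan is a two-step argument relying on Feldman--Moore: first realize $\mathcal R$ and $T$ via a single countable group built from the $[\mathcal R_n]$'s, then patch finitely many local pieces of $T$ together inside $\la\bigcup_n [\mathcal R_n]\ra$.

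First I would invoke the Feldman--Moore theorem to find, for each $n$, a countable subgroup $\Gamma_n\leq [\mathcal R_n]$ whose orbits are the $\mathcal R_n$-classes. Let $\Gamma=\la\bigcup_n\Gamma_n\ra\leq\Aut(X,\mu)$, a countable subgroup. Since each generator lies in $[\mathcal R]$, $\Gamma$-orbits sit inside $\mathcal R$-classes; conversely each $\mathcal R_n$-class is a union of $\Gamma$-orbits, so taking joins the $\Gamma$-orbits are exactly the $\mathcal R$-classes. Crucially, $\Gamma\subseteq\la\bigcup_n [\mathcal R_n]\ra$ by construction, and this is the reservoir from which approximations will be drawn.

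Applying Feldman--Moore a second time to $T\in[\mathcal R]$ via the $\Gamma$-action yields a Borel partition $X=\bigsqcup_{\gamma\in\Gamma}A_\gamma$ with $T|_{A_\gamma}=\gamma|_{A_\gamma}$. Given $\epsilon>0$, I fix a finite set $F=\{\gamma_1,\ldots,\gamma_m\}\subseteq\Gamma$ with $\mu\bigl(\bigsqcup_j A_{\gamma_j}\bigr)>1-\epsilon$ and aim to produce $S\in\la\bigcup_n[\mathcal R_n]\ra$ satisfying $S|_{A_{\gamma_j}}=\gamma_j|_{A_{\gamma_j}}$ for every $j$; this automatically gives $d_u(S,T)<\epsilon$.

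I build $S$ inductively. Start with $S_0=\mathrm{id}$, and given $S_{j-1}\in\la\bigcup_n [\mathcal R_n]\ra$ agreeing with $T$ on $A_{\gamma_1}\sqcup\cdots\sqcup A_{\gamma_{j-1}}$, set $S_j=\tau_j S_{j-1}$ for a correction element $\tau_j\in\la\bigcup_n [\mathcal R_n]\ra$ that acts as $\gamma_j S_{j-1}^{-1}$ on $S_{j-1}(A_{\gamma_j})$ and fixes each $T(A_{\gamma_i})$ pointwise for $i<j$. To produce $\tau_j$, write $\gamma_j S_{j-1}^{-1}=g_k g_{k-1}\cdots g_1$ as a word in $\bigcup_n\Gamma_n$, consider the chain of iterates $B_l=g_l\cdots g_1(S_{j-1}(A_{\gamma_j}))$, and compose the elementary involutions in $[\mathcal R_{n_l}]$ that swap $B_{l-1}$ with $B_l$. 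Each such involution lies in $[\mathcal R_{n_l}]\subseteq\la\bigcup_n [\mathcal R_n]\ra$, and the composition produces the desired action on $S_{j-1}(A_{\gamma_j})$.

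The main obstacle I expect is the disjointness bookkeeping: at each inductive step one must ensure that the intermediate sets $B_l$ are pairwise disjoint (so that the elementary involutions compose as intended), and further disjoint from the previously used ranges $T(A_{\gamma_i})$ for $i<j$ (so that the values of $S_{j-1}$ there are left untouched). This is handled by subdividing $S_{j-1}(A_{\gamma_j})$ further into countably many Borel pieces on which the iterates of the word behave predictably, and performing the construction piece by piece; Feldman--Moore-style enumeration guarantees that only countably many such pieces are needed, which suffices to produce $\tau_j$ inside $\la\bigcup_n[\mathcal R_n]\ra$ up to an arbitrarily small error that can be absorbed into $\epsilon$.
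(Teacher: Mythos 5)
Your reduction steps are fine: taking Feldman--Moore groups $\Gamma_n\leq[\mathcal R_n]$, observing that $\Gamma=\langle\bigcup_n\Gamma_n\rangle\subseteq\langle\bigcup_n[\mathcal R_n]\rangle$ has the $\mathcal R$-classes as orbits, decomposing $T$ as $T|_{A_\gamma}=\gamma|_{A_\gamma}$, and keeping only finitely many pieces of total measure $>1-\epsilon$ is a standard and correct beginning. The problem is that everything you have done up to that point is soft; the entire content of the theorem is the existence of the correction $\tau_j$, and your construction of it does not work. The ``elementary involution swapping $B_{l-1}$ with $B_l$'' only exists when $B_{l-1}\cap B_l=\emptyset$, and, more seriously, the intermediate sets $B_l=g_l\cdots g_1(S_{j-1}(A_{\gamma_j}))$ are completely forced by the word: subdividing the domain $S_{j-1}(A_{\gamma_j})$ into smaller pieces does not move the trajectory of any individual point, so it can neither separate consecutive $B_l$'s nor push them off the protected sets $T(A_{\gamma_i})$, $i<j$. (Rerouting through other elements of $[\mathcal R_{n_l}]$ is not available either, since the $\mathcal R_{n_l}$-classes of points of $B_{l-1}$ may be entirely contained in the region you need to protect, e.g.\ when $\mathcal R_{n_l}$ is finite.) Nor can the damage be ``absorbed into $\epsilon$'': even doing the construction piece by piece, the supports of your corrections sweep out sets of measure comparable to (word length)$\times\mu(A_{\gamma_j})$, and their overlaps with the already-corrected region are of that order, not arbitrarily small. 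A minor additional slip: $\gamma_jS_{j-1}^{-1}$ is a word in $\bigcup_n[\mathcal R_n]$, not in $\bigcup_n\Gamma_n$, since your earlier corrections are full-group elements; this is cosmetic, but the disjointness issue is not.

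What is missing is precisely the mechanism that Kittrell and Tsankov develop to prove their Theorem 4.7 (the present paper does not reprove it; it only proves, in Section \ref{exam}, the much easier special case of an increasing union of finite relations, where one extends $T$ inside each finite class directly and no intermediate sets appear). The key point there is to produce, for a partial isomorphism $\varphi=\psi_k\cdots\psi_1$ with $\psi_l\in[[\mathcal R_{n_l}]]$ and $\dom\varphi\cap\rng\varphi=\emptyset$, an element of $\langle\bigcup_n[\mathcal R_n]\rangle$ (or its closure) which implements $\varphi$ and whose support is contained in $\dom\varphi\cup\rng\varphi$ --- an induction on the word length that deals internally with the overlaps of the intermediate images, rather than hoping they are disjoint. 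Once corrections are supported exactly on $\dom\varphi\cup\rng\varphi$, one can cut the space into countably many pieces, build the corresponding corrections with pairwise disjoint supports, and use that finite subproducts of an infinite product of disjointly supported elements converge uniformly, so the limit lies in the closure. Without a lemma of this type your induction on $j$ cannot be closed, so as written the proposal has a genuine gap at its central step.
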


%If $f$ and $g$ are two elements of $\LL^\infty(X,\mu)$, say $f\leq g$ if for all $x\in X$, $f(x)\leq g(x)$, and $f<g$ if $f\leq g$ and there exists a positive set $A$ such that $f(x)<g(x)$ for all $x\in A$, that is, $f\leq g$ but $f\neq g$. When we say a function $f$ is \textbf{positive}, we mean that $f>0$, and we will say that $f$ is \textbf{everywhere positive} if for all $x\in X$, $f(x)>0$. 

\section{Conditional expectation and non-ergodic equivalence relations}\label{condexp}

\subsection{Conditional expectation and conditional measure}\label{condmesdefs}

Given a pmp equivalence relation $\mathcal R$, one may look at the measure subalgebra of the $\mathcal R$-invariant sets, i. e. the set of all $A\in \MAlg(X,\mu)$ such that for any $\varphi\in [\mathcal R]$, $\varphi\inv(A)=A$. We denote this closed subalgebra by $M_{\mathcal R}$, and by $\LL^2_{\mathcal R}(X)$ the set of square integrable $M_{\mathcal R}$-measurable real functions.

The orthogonal projection $\mathbb E_\mathcal R$ from the Hilbert space $\LL^2(X)$ onto the closed subspace $\LL^2_{\mathcal R}(X)$ satisfies the following equality, which defines it uniquely: for any $f\in \LL^2(X)$ and $g\in \LL^2_{\mathcal R}(X)$,
$$\int_X fg=\int_X\mathbb E_{\mathcal R}(f)g.$$
$\mathbb E_{\mathcal R}$ is called a \textbf{conditional expectation}. When $A$ is a subset of $X$, its characteristic function is an element of $\LL^2(X)$, and we call $\mathbb E_{\mathcal R}(\chi_A)$ the $\mathcal R$-\textbf{conditional measure} of $A$, denoted by $\mu_{\mathcal R}(A)$. Because $\mathbb E_{\mathcal R}$ is a contraction for the $\LL^\infty$ norm, $\mu_{\mathcal R}(A)$ is an $M_\mathcal R$-measurable function taking values in $[0,1]$. Proposition \ref{maha} roughly states that any $M_{\mathcal R}$-measurable function $f:(X,\mu)\to[0,1]$ is equal to $\mu_{\mathcal R}(A)$ for some $A\subseteq X$. 

We will use the partial order on $\LL^2(X)$ defined by $f\leq g$ if $f(x)\leq g(x)$ for almost every $x\in X$. Note that by $f<g$, we mean that $f\leq g$ and $f\neq g$, and not that $f(x)< g(x)$ for almost every $x\in X$. \\

%Let $\mathcal R$ be a pmp equivalence relation on $(X,\mu)$. We say it is of type I if there exists a Borel set $A\subseteq X$ such that for  all $x\in X$, there is a unique $a\in A$ such that $x \,\mathcal R\,a$. Such an $A$ is called a \textbf{transversal} of $\mathcal R$. It is a standard fact that type I equivalence relations are \textbf{finite}, i.e. only have finite equivalence classes. One can also look at restrictions of $\mathcal R$ to non null sets $A$, and if none of these is finite we say $\mathcal R$ is of type $\mathrm{II}_1$. This condition is equivalent to being \textbf{aperiodic}, i.e. to having only infinite classes. 
The following proposition is due to Dye \cite{MR0131516}; we give a simple proof, based on the marker lemma which we now recall.

\begin{lem}[{\cite[Lem. 6.7]{MR2095154}}]\label{marker} Let $\mathcal R$ be an aperiodic equivalence relation, then there exists a decreasing sequence of $A_n\subseteq X$ which intersect every $\mathcal R$ class and such that $\bigcap_n A_n=\emptyset$. 
\end{lem}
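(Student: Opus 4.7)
My plan is to produce the sequence $(A_n)$ in two stages. First, I would show that for every $\epsilon > 0$ there exists a Borel complete section of $\mathcal R$ (i.e.\ a Borel set meeting every $\mathcal R$-class) of measure less than $\epsilon$. Once this is established, pick for each $n$ a complete section $C_n$ with $\mu(C_n)\leq 2^{-n}$ and set $A_n := \bigcup_{k\geq n}C_k$. Each $A_n$ contains the complete section $C_n$ and is therefore itself a complete section, the sequence $(A_n)$ is visibly decreasing, and $\mu(A_n) \leq 2^{-n+1}$. The Borel--Cantelli inequality gives $\mu\bigl(\bigcap_n A_n\bigr)=0$, so $\bigcap_n A_n = \emptyset$ modulo null sets, which matches the paper's convention.

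To produce complete sections of arbitrarily small measure I would apply the classical Rokhlin lemma to an aperiodic element $T \in [\mathcal R]$. For any $N \in \N$, Rokhlin furnishes a Borel set $B$ such that $B, TB, \dots, T^{N-1}B$ are pairwise disjoint and $\mu\bigl(X\setminus \bigsqcup_{k<N} T^k B\bigr) < 1/N$. Letting $R$ denote this remainder, the set $B\cup R$ has measure at most $2/N$ and meets every $T$-orbit: any orbit lying inside the Rokhlin tower hits $B$, and any orbit intersecting $R$ hits $R$ by definition. Since $T \in [\mathcal R]$, every $\mathcal R$-class is a union of $T$-orbits, so $B\cup R$ is a complete section of $\mathcal R$; taking $N$ large yields the small sections needed above.

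The main obstacle in this plan is the existence of an aperiodic $T \in [\mathcal R]$ under the sole assumption that $\mathcal R$ is aperiodic. A standard way to get one is to fix a Feldman--Moore enumeration $\mathcal R = \bigcup_n \mathrm{graph}(T_n)$ with $T_n \in [\mathcal R]$ and build $T$ by a Borel inductive construction that, on every $\mathcal R$-class, stitches pieces of the $T_n$'s together so that the resulting $T$-orbit is infinite. An alternative avoiding Rokhlin altogether would be to work directly with such an enumeration and take $C_n$ to be a Borel choice of ``$n$-th tails'' along each class, but producing such an enumeration requires essentially the same effort as producing an aperiodic $T$, so the Rokhlin route is the cleanest.
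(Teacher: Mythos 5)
Your reduction is fine as far as it goes: if for every $\epsilon>0$ one can produce a Borel complete section of measure less than $\epsilon$, then taking tails of unions gives a decreasing sequence of complete sections whose measures tend to $0$, hence with null intersection; since the paper works modulo null sets (and the only way the lemma is used, in Proposition \ref{maha}, is through $\mu(A_n)\to 0$), that weaker conclusion is acceptable here, even though the cited statement in \cite{MR2095154} is a genuinely Borel one with literally empty intersection. The Rokhlin step is also correct: for an aperiodic $T\in[\mathcal R]$ the set $B\cup R$ meets every (two-sided) $T$-orbit and has measure at most $2/N$, and every $\mathcal R$-class is a union of $T$-orbits.

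The genuine gap is the step you yourself flag as "the main obstacle": the existence of an aperiodic element of $[\mathcal R]$ for an arbitrary aperiodic $\mathcal R$. This is not proved; the suggestion to "stitch pieces of a Feldman--Moore enumeration together so that the orbits come out infinite" is not an argument, and carrying it out is delicate (one must keep the map a measure-preserving bijection, keep the construction measurable, and rule out the stitching closing up into finite cycles on a positive measure set). More importantly, this existence statement is at least as deep as the marker lemma itself: the standard proofs, Borel or measure-theoretic, obtain an aperiodic element (equivalently an aperiodic hyperfinite subrelation) \emph{from} vanishing markers or from Maharam-type exhaustion arguments. Within this paper the aperiodic element would come from Proposition \ref{zimnonergo} (Dye), whose proof rests on Proposition \ref{maha}, which is itself deduced from the very lemma you are proving — so your route is circular in the paper's logical order, and incomplete as a standalone proof. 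Note that the paper does not prove the lemma at all but quotes \cite[Lem.\ 6.7]{MR2095154}, where it is established by a short direct Borel construction (no Rokhlin lemma, no aperiodic element of the full group), which is why it can sit at the very start of the development; if you want a self-contained proof, that is the argument to reproduce rather than the Rokhlin route.
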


We call such $A_n$'s a sequence of \textbf{markers}.

\begin{prop}[\cite{MR0131516}, Maharam's lemma]\label{maha}
$\mathcal R$ is aperiodic iff for any $A\subseteq X$, and for any $M_\mathcal R$-measurable function $f$ such that $0\leq f\leq \mathbb \mu_{\mathcal R}(A)$, there exists $B\subseteq A$ such that the $\mathcal R$-conditional measure of $B$ equals $f$.
\end{prop}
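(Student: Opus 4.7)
The statement is an iff, which I would address in two separate parts, with the substantive work concentrated in the $(\Rightarrow)$ direction.

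For $(\Leftarrow)$, I plan to argue contrapositively: if $\mathcal R$ is not aperiodic, then partitioning the periodic part of $X$ by class size yields, for some $n \geq 1$, a positive-measure $\mathcal R$-invariant set $C$ on which every class has exactly $n$ elements. A Borel fundamental domain $F \subseteq C$ for $\mathcal R|_C$ satisfies $\mu_\mathcal R(F) = (1/n)\chi_C$, and the $M_\mathcal R$-measurable function $f := (1/(2n))\chi_C$ satisfies $0 \leq f \leq \mu_\mathcal R(F)$. However, no $B \subseteq F$ realizes $f$: since $|B \cap [x]_\mathcal R| \leq 1$ for $x \in C$, the function $\mu_\mathcal R(B)$ only takes values in $\{0, 1/n\}$ on $C$.

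For $(\Rightarrow)$, assume $\mathcal R$ is aperiodic and fix $A$ together with an $M_\mathcal R$-measurable $f$ satisfying $0 \leq f \leq \mu_\mathcal R(A)$. My plan is an exhaustion argument in the family $\Sigma := \{B \subseteq A : \mu_\mathcal R(B) \leq f \text{ a.e.}\}$: starting from $B_0 = \emptyset$, at each stage, if $g_n := f - \mu_\mathcal R(B_n)$ is positive on a set of positive measure, enlarge $B_n$ by adjoining a piece $C_n \subseteq A \setminus B_n$ with $\mu_\mathcal R(C_n) \leq g_n$ and $\mu(C_n)$ within a factor of the supremum of admissible $\mu(C)$'s. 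Setting $B^\star := \bigcup_n B_n$, monotone convergence for conditional expectations shows $B^\star \in \Sigma$, and the standard argument (contradiction by running the construction further if the residual gap $f - \mu_\mathcal R(B^\star)$ is positive somewhere) yields $\mu_\mathcal R(B^\star) = f$ a.e.

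The main obstacle is the inductive step: after restricting to an $M_\mathcal R$-measurable set on which $g_n \geq 1/k$ for a suitable integer $k$ and setting $A' := (A \setminus B_n) \cap \{g_n \geq 1/k\}$, I need to find $C \subseteq A'$ of positive $\mu$-measure with $\mu_\mathcal R(C) \leq 1/k$ almost everywhere. This is where Lemma~\ref{marker} is essential. First, I would argue that $\mathcal R \cap (A' \times A')$ is aperiodic on $(A', \mu|_{A'})$: atomlessness of the ergodic components of $\mathcal R$, combined with ergodicity, rules out the possibility that $A'$ meets almost every class in only finitely many points, since otherwise $A'$ would decompose as a finite union of Borel selectors, each of zero measure in each component it meets. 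Applying Lemma~\ref{marker} to this aperiodic restriction yields markers $(C_m) \subseteq A'$ decreasing to $\emptyset$; dominated convergence then gives $\mu_\mathcal R(C_m) \downarrow 0$ a.e., and Egorov's theorem on $(X, M_\mathcal R, \mu)$ produces an $M_\mathcal R$-measurable set $E'$ of almost full measure on which the convergence is uniform. Picking $m$ with $\mu_\mathcal R(C_m) \leq 1/k$ on $E'$, the candidate $C := C_m \cap E'$ has the right upper bound on its conditional measure. The delicate point, which I expect to be the hardest step, is ensuring $\mu(C) > 0$: this forces one to reconcile the uniform smallness of $\mu_\mathcal R(C_m)$ on $E'$ with positivity of its essential support there, which I would arrange by choosing the markers inside $A'$ recursively so that the essential support of $\mu_\mathcal R(C_m)$ covers all but a set of $\mu$-measure smaller than $\mu(X \setminus E')$.
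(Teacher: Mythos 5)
Your overall route is essentially the paper's: the converse via finite classes making a function with value $\tfrac1{2n}$ unattainable, and the forward direction via the marker lemma (Lemma \ref{marker}) applied to the restriction to produce a piece of small conditional measure, followed by an exhaustion argument. Your countable greedy exhaustion is a perfectly good substitute for the paper's transfinite maximality argument, and you are right --- indeed more careful than the paper --- to check that the restriction of $\mathcal R$ to $A'$ is aperiodic (though if $A'$ met a.e.\ class finitely it would decompose into \emph{countably} many partial transversals, not finitely many; the conclusion is the same since each is null for the invariant atomless components).

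The one genuine gap is precisely the step you flag, positivity of $\mu(C)$, and the fix you sketch is circular: the Egorov set $E'$ is extracted from the sequence $\left(\mu_{\mathcal R}(C_m)\right)_m$, so you cannot choose the markers ``so that the essential support of $\mu_{\mathcal R}(C_m)$ covers all but a set of measure smaller than $\mu(X\setminus E')$'' before knowing $E'$. But no such arrangement is needed: each $C_m$ meets every class meeting $A'$, and the $\mathcal R$-saturation of a $\mu$-null set is $\mu$-null (the relation is generated by countably many measure-preserving maps), so $\mu_{\mathcal R}(C_m)>0$ almost everywhere on the saturation $[A']_{\mathcal R}$. Hence it suffices to demand $\mu(X\setminus E')<\mu\left([A']_{\mathcal R}\right)$, and then $\mu(C_m\cap E')=\int_{E'}\mu_{\mathcal R}(C_m)\,d\mu>0$ for your chosen $m$. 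In fact Egorov can be dispensed with altogether: taking $C:=C_m\cap\{\mu_{\mathcal R}(C_m)\leq 1/k\}$ for $m$ large works, since the cutoff set is $\mathcal R$-invariant; this is exactly the paper's trick, where $B$ is defined as the set of $x$ lying in some marker $A_n$ with $\mu_{\mathcal R}(A_n)(x)<f(x)$, giving the upper bound and positivity in one stroke. With this observation inserted, your argument closes.
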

\begin{proof}
First note that we can restrict ourselves to the case $0<f\leq \mu_{\mathcal R}(A)$. By a maximality argument\footnote{Such a maximality argument goes as follows: one builds by transfinite induction a non-decreasing sequence $(B_\alpha)_{\alpha<\omega_1}$ of Borel subsets of $A$  such that for all $\alpha<\omega_1$, $\mu_{\mathcal R}(B_\alpha)\leq f$ and whenever $\mu_\mathcal R(B_\alpha)< f$, then $\mu_{\mathcal R}(B_{\alpha+1})>\mu_{\mathcal R}(B_\alpha)$. Such a sequence will then have to be stationnary, for there is no uncountable disjoint family of Borel sets of positive measure. But that means that there is some $\alpha<\omega_1$ for which  $\mu_\mathcal R(B_\alpha)=f$. See \cite[Prop. D.1]{lemai2014} for another approach based on the measure algebra of $(X,\mu)$. }, it suffices to show that we can find $B\subseteq A$ such that $0<\mu_{\mathcal R}(B)\leq f$. Let $A_n$ be a sequence of markers for $\mathcal R_{\restriction A}$, note that $ \mathbb \mu(A_n)=\int \mu_{\mathcal R}(A_n)\to 0$, so up to taking a subsequence we can assume $\mu_{\mathcal R}(A_n)$ converges pointwise to 0. Now define $B$ to be the set of $x\in X$ such that here exists an integer $n$  for which $x\in A_n$ and $\mu_{\mathcal R}(A_n)(x)<f(x)$. 

Conversely, if $\mathcal R_{\restriction A}$ is finite, one can further restrict $A$ and suppose all its classes are of size $n$. Now the function $\frac 1{2n}$ cannot arise as the conditional measure of $B\subseteq A$. 
\end{proof}

\begin{rmq}\label{split}The above proposition has the following nice consequence: given $A\subseteq X$, we can split it ‘‘equally among ergodic components'': one puts $f= \mu_{\mathcal R}(A)/2$ and gets $B\subseteq A$ such that $\mu_{\mathcal R}(B)=\mu_{\mathcal R}(A\setminus B)=\mu_{\mathcal R}(A)/2$ (in particular, $\mu(B)=\frac{\mu(A)}2$). And of course one can also split $A$ in any finite number of pieces, all which have the same conditional measure.
\end{rmq}

\begin{prop}\label{transitive}Let $\mathcal R$ be a pmp equivalence relation. Then if two sets $A$ and $B$ have the same $\mathcal R$-conditional measure, there exists $\varphi\in[[\mathcal R]]$ whose domain is $A$ and whose range is $B$.
\end{prop}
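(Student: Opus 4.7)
The plan is to build $\varphi$ by exhaustion along a countable family of generators of $\mathcal R$. By Feldman-Moore, fix a countable group $\Gamma=\{\gamma_n\}_{n\in\N}$ of measure-preserving bijections of $(X,\mu)$ such that $\mathcal R=\mathcal R_\Gamma$. The goal is to realize $\varphi$ as a disjoint union $\bigsqcup_n\psi_n$, where each $\psi_n$ is a piece of $\gamma_n$ sending part of what remains of $A$ onto part of what remains of $B$, and then to check that nothing is left over.

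The engine of the proof is the following elementary observation: if $\psi\in[[\mathcal R]]$ and $C\subseteq X$ is $\mathcal R$-invariant, then because $\psi$ preserves $\mathcal R$-classes and $C$ is saturated, $\psi$ restricts to a measure-preserving bijection between $\dom\psi\cap C$ and $\rng\psi\cap C$. Testing against the indicators of all elements of $M_{\mathcal R}$, this gives $\mu_\mathcal R(\dom\psi)=\mu_\mathcal R(\rng\psi)$. With this in hand, I would set $A_0:=A$, $B_0:=B$, and recursively
\[\psi_n:=\gamma_n|_{A_n\cap\gamma_n\inv(B_n)},\quad A_{n+1}:=A_n\setminus\dom\psi_n,\quad B_{n+1}:=B_n\setminus\rng\psi_n.\]
The domains (resp. ranges) of the $\psi_n$ are pairwise disjoint subsets of $A$ (resp. $B$), each $\psi_n$ lies in $[[\mathcal R]]$ since it is a restriction of $\gamma_n\in[\mathcal R]$, and hence $\varphi:=\bigsqcup_n\psi_n$ is a partial isomorphism in $[[\mathcal R]]$ with $\dom\varphi\subseteq A$ and $\rng\varphi\subseteq B$.

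It remains to prove that $A_\infty:=\bigcap_n A_n$ and $B_\infty:=\bigcap_n B_n$ are null. The key observation applied to each $\psi_n$ gives $\mu_\mathcal R(A_n)=\mu_\mathcal R(B_n)$ for every $n$, and passing to the limit yields $\mu_\mathcal R(A_\infty)=\mu_\mathcal R(B_\infty)$. On the other hand, for every $n$, the set $A_\infty\cap\gamma_n\inv(B_\infty)$ is contained both in $\dom\psi_n=A_n\cap\gamma_n\inv(B_n)$ and in $A_{n+1}=A_n\setminus\dom\psi_n$, so it is empty. Since $\Gamma$ is a group this means that $A_\infty$ is disjoint from the $\mathcal R$-saturation of $B_\infty$, which (up to null sets) coincides with the support of $\mu_\mathcal R(B_\infty)$; hence $\mu_\mathcal R(B_\infty)=\mu_\mathcal R(A_\infty)$ vanishes on $A_\infty$. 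Since $\mu_\mathcal R(A_\infty)>0$ almost everywhere on $A_\infty$ as soon as $\mu(A_\infty)>0$, we conclude $\mu(A_\infty)=0$, and symmetrically $\mu(B_\infty)=0$, finishing the proof. The only non-cosmetic step is the final dovetailing between the conditional-measure identity $\mu_\mathcal R(A_\infty)=\mu_\mathcal R(B_\infty)$ and the saturation-disjointness of $A_\infty$ and $B_\infty$; everything else is bookkeeping, and notably no aperiodicity assumption is needed.
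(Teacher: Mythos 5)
Your proof is correct and follows essentially the same route as the paper: the paper's proof invokes the exhaustion construction of Kechris--Miller (Lem.\ 7.10) over a Feldman--Moore enumeration $\{\gamma_n\}$ — exactly the greedy recursion you spell out — and rules out non-null leftovers by the same combination of equality of their conditional measures and the saturation/disjointness contradiction. Your write-up just makes explicit the details the paper leaves to the cited reference.
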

\begin{proof}This is a standard maximality argument. One can for instance use the construction in  \cite[Lem. 7.10]{MR2095154}, and check that if $\mu(A\setminus A')>0$, then the $\mathcal R$-conditional measures of $A\setminus A'$ and $B\setminus B'$ are the same and positive. But then the $\mathcal R$-saturation of $A\setminus A'$ contains $B\setminus B'$, so that there exists a minimal $n$ such that $\gamma_n\inv(B\setminus B')\cap (A\setminus A')$ is non-null, a contradiction.
\end{proof}

\subsection{A non-ergodic analogue of Zimmer's lemma}
%Ref pour measure algebras (pt ê plus tôt)
Recall that every closed subalgebra of $\MAlg(X,\mu)$ arises as the inverse image of $\MAlg(Y,\nu)$ by some measure-preserving $\pi: (X,\mu)\to(Y,\nu)$, where $(Y,\nu)$ is a standard probability space possibly with atoms. So if $\mathcal R$ is a measure-preserving equivalence relation, one can find a standard probability space $(Y_{\mathcal R},\nu_{\mathcal R})$, possibly with atoms, and a measure-preserving map $\pi_{\mathcal R}: (X,\mu)\to (Y_\mathcal R,\nu_{\mathcal R})$ such that $\pi_{\mathcal R}\inv(\MAlg(Y_{\mathcal R}, \nu_{\mathcal R}))=M_{\mathcal R}$. This yields an identification between $\LL^2_{\mathcal R}(X)$ and $\LL^2(Y_{\mathcal R},\nu_{\mathcal R})$. 

The following proposition is essentially due to Dye \cite[Thm.\ 4 and 5]{MR0131516}, and generalizes the well-known fact often attributed to Zimmer that any ergodic equivalence relation contains an ergodic subequivalence relation generated by a single automorphism. Before we state this proposition, recall that the pmp equivalence relation $\mathcal R_0$ is the equivalence relation induced by the odometer $T_0$ acting on $\{0,1\}^\N$ equipped with the Bernoulli 1/2 product measure (see Section \ref{secro} for more details).

\begin{prop}[Dye]\label{zimnonergo}Let $\mathcal R$ be a pmp aperiodic equivalence relation on $(X,\mu)$, let $\pi_{\mathcal R}: (X,\mu)\to (Y_{\mathcal R},\nu_{\mathcal R})$ be the measure-preserving map associated with the algebra of $\mathcal R$-invariant subsets of $X$. Then there exists an isomorphism $\varphi: (X,\mu)\to (Y_{\mathcal R}\times2^\N,\nu_{\mathcal R}\times\lambda)$ such that $\mathrm{id}_{Y_{\mathcal R}}\times \mathcal R_0$ is mapped into $\mathcal R$ by $\varphi\inv$, and the following diagram commutes:
$$\begin{tikzcd}[column sep=0ex]
(X,\mu)
  \arrow{rr}{\varphi}\arrow[rightarrow]{rd}[description]{\pi_{\mathcal R}}& &(Y_{\mathcal R}\times 2^\N, \nu_{\mathcal R}\times\lambda)\arrow{ld}[description]{\pi}\\
      &(Y_{\mathcal R},\nu_{\mathcal R}) &
 \end{tikzcd}$$
\end{prop}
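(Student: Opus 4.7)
The plan is to construct a dyadic tower of measurable partitions of $(X,\mu)$ that mimics the dyadic structure of $(2^{\N},\lambda)$ along each fibre of $\pi_{\mathcal R}$, and then read $\varphi$ off the resulting cells. Using Maharam's lemma (Proposition \ref{maha}) with $f=\mu_{\mathcal R}(A_s)/2$, I would inductively build partitions $\mathcal P_n=\{A_s: s\in\{0,1\}^n\}$ with $\mu_{\mathcal R}(A_s)=1/2^n$, each $\mathcal P_n$ refining $\mathcal P_{n-1}$ via a dyadic split $A_s=A_{s0}\sqcup A_{s1}$ of equal $\mathcal R$-conditional measure. Proposition \ref{transitive} then supplies, for each $s\in\{0,1\}^{<\omega}$, a partial isomorphism $\psi_s\in[[\mathcal R]]$ with $\dom\psi_s=A_{s0}$ and $\rng\psi_s=A_{s1}$.

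Next, I would need to make these splits rich enough that the $\sigma$-algebra generated by $\bigcup_n\mathcal P_n$ together with $M_{\mathcal R}$ recovers all of $\MAlg(X,\mu)$. Fixing a countable family $(B_k)_{k\in\N}$ generating $\MAlg(X,\mu)$ over $M_{\mathcal R}$, at stage $n$ I would exploit the flexibility in Maharam's lemma to choose the splits so that each $B_k$ with $k\leq n$ is approximated within $2^{-n}$ in symmetric difference by a union of cells of $\mathcal P_n$; this is the crucial separation step, and the marker lemma (Lemma \ref{marker}) may be useful here to witness that inside each $A_s$ there is enough room for such approximations without violating the conditional-measure constraints.

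Once the tower is in place, I define $\varphi(x)=(\pi_{\mathcal R}(x),(\epsilon_n(x))_{n\in\N})$, where $\epsilon_n(x)$ is the last coordinate of the unique $s\in\{0,1\}^n$ with $x\in A_s$. Because $\mu_{\mathcal R}(A_s)=1/2^n$, the push-forward of $\mu$ along $\varphi$ matches $\nu_{\mathcal R}\times\lambda$ on each cylinder $\pi_{\mathcal R}(A_s)\times[s]$; the separation property from the previous step makes $\varphi$ an isomorphism of measure spaces, and commutativity of the diagram holds by construction, since the first coordinate of $\varphi$ is $\pi_{\mathcal R}$.

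It remains to check that $\varphi^{-1}(\mathrm{id}_{Y_{\mathcal R}}\times\mathcal R_0)\subseteq\mathcal R$. The equivalence relation $\mathcal R_0$ on $2^{\N}$ is generated, up to $\mathcal R_0$-equivalence, by the dyadic flips swapping the cylinders $[s0]$ and $[s1]$ for $s\in\{0,1\}^{<\omega}$, and under $\varphi$ these flips correspond precisely to the partial isomorphisms $\psi_s\in[[\mathcal R]]$, so the equivalence relation they generate is a subequivalence relation of $\mathcal R$ by construction. The hard step is the second one: controlling the freedom in Maharam's lemma well enough to guarantee both the conditional-measure constraints and sufficient refinement to recover $\MAlg(X,\mu)$ modulo $M_{\mathcal R}$.
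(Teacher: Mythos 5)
The paper itself gives no proof of this proposition (it is cited to Dye), so your attempt has to be judged against the standard argument, and as written it has a genuine gap at the decisive step. The pullback under $\varphi$ of the flip $N_{s0}\to N_{s1}$ (crossed with $\mathrm{id}_{Y_{\mathcal R}}$) is \emph{not} $\psi_s$: it is the map sending $x\in A_{s0}$ to the unique point of $A_{s1}$ lying over the same point of $Y_{\mathcal R}$ and having the same later digits as $x$. Since the cells refining $A_{s1}$ were chosen with no reference to $\psi_s$, this map has nothing to do with $\psi_s$ and there is no reason for it to belong to $[[\mathcal R]]$; so the inclusion $\varphi\inv(\mathrm{id}_{Y_{\mathcal R}}\times\mathcal R_0)\subseteq\mathcal R$ is unjustified. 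Concretely, in the ergodic case nothing in your construction prevents the tower from being the standard dyadic cylinders $A_s=N_s$ (these satisfy all your constraints, and Proposition \ref{transitive} supplies some $\psi_s$); then $\varphi=\mathrm{id}$ and your conclusion would assert $\mathcal R_0\subseteq\mathcal R$ for every ergodic aperiodic $\mathcal R$, which is false. The missing idea — the actual content of Dye's argument — is to build the tower and the partial isomorphisms \emph{coherently}: at each refinement split only one cell (say $A_{0^n}$) using Maharam's lemma, transport that split to all other cells by the already-constructed partial isomorphisms, and adjoin one new element of $[[\mathcal R]]$ between the two halves of the base cell. This produces an increasing sequence of finite subrelations $\mathcal S_n\subseteq\mathcal R$ whose classes meet each cell of $\mathcal P_n$ exactly once, the digit maps become equivariant with respect to them, and $\varphi$ then carries $\bigcup_n\mathcal S_n$ onto $\mathrm{id}_{Y_{\mathcal R}}\times\mathcal R_0$, which is exactly what yields the required inclusion.

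Your second step also needs repair, partly for the same reason. As literally stated it cannot work: a union of cells of $\mathcal P_n$ has constant $\mathcal R$-conditional measure, a multiple of $2^{-n}$, so it can never approximate a set $B_k$ whose conditional measure is non-constant; what you must approximate by are sets of the algebra generated by $\mathcal P_n$ together with $M_{\mathcal R}$. More importantly, once the coherence above is imposed you are no longer free to adapt the split of every cell to $B_k$: only the split of the base cell is at your disposal, its transports being forced, so the bookkeeping ensuring that $\sigma\bigl(\bigcup_n\mathcal P_n\bigr)\vee M_{\mathcal R}=\MAlg(X,\mu)$ has to be organized around the traces of $B_k$ pulled back to the base cell and handled with error estimates over infinitely many stages per $B_k$. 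This is doable (and aperiodicity, via Maharam's lemma, is what makes the fibre measures non-atomic so that such a generating dyadic filtration can exist at all), but it is the delicate part of the proof, the marker lemma plays no role in it, and your sketch does not yet contain the argument.
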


\subsection{Conditional cost}\label{CCost}

Let $\mathcal R$ be an aperiodic equivalence relation. As orbit equivalence remembers the ergodic decomposition of $\mathcal R$, the following notion makes sense as an invariant for non-ergodic equivalence relations. It uses the conditional measure $\mu_\mathcal R$, defined in Section \ref{condmesdefs}.

\begin{df}
The \textbf{C-cost} (conditionnal cost) of a pmp equivalence relation $\mathcal R$ is the infimum over all graphings $\Phi=\{\varphi_i\}_{i\in\N}$ which generate $\mathcal R$ of the functions
$$\CCost(\Phi)=\sum_{i\in\N}\mu_{\mathcal R}(\varphi_i).$$
\end{df}

Let us check that this definition makes sense. The functions $\mu_{\mathcal R}(\Phi)$ are positive elements of the von Neumann algebra $\LL^\infty_{\mathcal R}(X,\mu)=\LL^{\infty}(Y_{\mathcal R},\nu_{\mathcal R})$, to show that they have an infimum we must check that they constitute a directed set. But if $\Phi$ and $\Phi'$ are two graphings which generate $\mathcal R$, the set $A=\{x\in X: \CCost(\Phi)(x)<\CCost(\Phi')(x)\}$ is $\mathcal R$-invariant, and one can define a new graphing $\Psi$ made of   the restrictions of the elements of $\Phi$ to $A$ and the restrictions of the elements of $\Phi'$ to $X\setminus A$. Such a $\Psi$ still generates $\mathcal R$, and we have
$$\CCost(\Psi)=\inf(\CCost(\Phi),\CCost(\Phi')).$$

Using a maximality argument, one can then see that for every $M_\mathcal R$-measurable everywhere positive function $f$, there exists a graphing $\Phi$ which generates $\mathcal R$ such that its C-cost is less than $f+\CCost\mathcal R$.

We now give a non-ergodic version of Lemma III.5 in \cite{MR1728876}.
\begin{lem}\label{troispointcinq}Let $\mathcal R$ be an aperiodic pmp equivalence relation, let $\mathcal R_0^Y\subseteq \mathcal R$ be hyperfinite with the same algebra of invariant sets as $\mathcal R$ (cf. Proposition \ref{zimnonergo}), and fix a graphing $\Psi_0$ of C-cost $1$ which generates $\mathcal R_0^Y$. Let $f$ be an $M_{\mathcal R}$-measurable function, everywhere positive. Then there exists a graphing $\Phi$ whose C-cost is everywhere less than $\CCost(\mathcal R)-1+f$, such that $\Phi_0\cup\Phi$ generates $\mathcal R$.
\end{lem}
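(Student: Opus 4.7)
The plan is to adapt Gaboriau's proof of Lemma III.5 of \cite{MR1728876} (the ergodic case) to the present non-ergodic setting, working throughout with conditional costs and $M_{\mathcal R}$-measurable data (I assume the statement's ``$\Phi_0\cup\Phi$'' is a typo for ``$\Psi_0\cup\Phi$''). The first step is to pick, using the directedness of C-cost noted just before the lemma, a graphing $\Psi=\{\psi_i\}_{i\in\N}$ of $\mathcal R$ with $\CCost(\Psi)\leq\CCost(\mathcal R)+f/2$ everywhere on $X$. The target is to construct $\Phi$ by absorbing (almost) one full conditional unit of mass from $\Psi$ into $\Psi_0$, so that the resulting $\CCost(\Phi)\leq\CCost(\Psi)-1+f/2\leq\CCost(\mathcal R)-1+f$.

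The heart of the argument is Gaboriau's cost-absorption surgery, which I apply fibrewise over the ergodic decomposition. In the ergodic case, Gaboriau shows that, starting from any graphing $\Psi_0$ of an aperiodic subrelation $\mathcal R_0$ of cost $1$ and any graphing $\Psi$ of the ambient relation of cost close to $\Cost(\mathcal R)$, one can construct $\Phi$ such that $\Psi_0\cup\Phi$ generates the same relation and $\Cost(\Phi)\leq\Cost(\Psi)-1+\epsilon$. His surgery iteratively trades mass from pieces of elements of $\Psi$ for compositions involving elements of $\Psi_0$ (which cost nothing, as $\Psi_0$ is already in the graphing), leaving behind only an $\epsilon$-residue; the crucial fact is the aperiodicity of $\mathcal R_0$, which via the marker lemma and a transitivity argument in the spirit of Proposition \ref{transitive} ensures that $[[\mathcal R_0]]$ is rich enough to produce the needed compositions. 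In our setting, $\mathcal R_0^Y\subseteq\mathcal R$ is aperiodic with the same invariant algebra as $\mathcal R$, hence aperiodic on every ergodic component, so Gaboriau's construction applies fibrewise, yielding graphings $\Phi_y$ of the fibre relations $\mathcal R_y$ with $\Cost(\Phi_y)\leq\CCost(\mathcal R)(y)-1+f(y)/2$.

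The main obstacle is performing all these fibrewise constructions coherently and measurably over $(Y_{\mathcal R},\nu_{\mathcal R})$, so as to assemble the $\Phi_y$'s into a single graphing $\Phi$ of $X$. This is where Proposition \ref{zimnonergo} becomes essential: it provides a measurable trivialization $\mathcal R_0^Y\simeq\mathrm{id}_{Y_{\mathcal R}}\times\mathcal R_0$, so that the aperiodic hyperfinite structure of $\mathcal R_0^Y$ looks the same on every fibre. Together with Maharam's lemma (Proposition \ref{maha}), which lets one measurably split sets according to any $M_{\mathcal R}$-measurable target mass---and which I use, in particular, to allocate $M_{\mathcal R}$-measurable sub-masses $\delta_i\leq\mu_{\mathcal R}(\psi_i)$ with $\sum_i\delta_i=1-f/2$ everywhere, valid since $\CCost(\Psi)\geq\CCost(\mathcal R)\geq 1$ by aperiodicity---this reduces every fibrewise choice in Gaboriau's argument (sub-masses to absorb, selection of partial isomorphisms in $[[\mathcal R_0^Y]]$, etc.) to standard Borel selection. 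The outcome is a global graphing $\Phi$ on $X$ with $\CCost(\Phi)\leq\CCost(\mathcal R)-1+f$ everywhere, and $\Psi_0\cup\Phi$ generates $\mathcal R$ because generation holds on every fibre.
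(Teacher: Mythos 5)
Your proposal is correct in substance and rests on the same engine as the paper's proof --- Gaboriau's absorption of one unit of cost into a fixed aperiodic subrelation via a small complete section (and your reading of the statement's ``$\Phi_0$'' as a typo for ``$\Psi_0$'' is right) --- but you globalize it differently. You disintegrate over the ergodic decomposition, run Gaboriau's Lemme III.5 surgery fibre by fibre, and then reassemble, invoking the trivialization of Proposition \ref{zimnonergo} and ``standard Borel selection''; that last step is where all the remaining work is hidden, since one must actually check that every choice in Gaboriau's construction (markers, the transport maps in $[[\mathcal R_0^Y]]$, the induced graphing) can be made Borel in the ergodic parameter, and you assert this rather than prove it. The paper never disintegrates: it takes one graphing $\Phi$ of $\mathcal R$ with $\CCost(\Phi)\leq \CCost(\mathcal R)+f/2$, uses Maharam's lemma (Proposition \ref{maha}) to produce a single global set $A$ with $\mu_{\mathcal R}(A)=f/3$ --- which, because $f>0$ everywhere and $M_{\mathcal R}=M_{\mathcal R_0^Y}$, is a complete section for $\mathcal R_0^Y$ as well --- and applies Gaboriau's induction procedure (lemme II.8) once, taking the forest connecting $X$ to $A$ inside $\mathcal R_0^Y$ so that $\Psi_0$ absorbs it; the leftover induced graphing $\tilde\Phi$ has C-cost $\leq \CCost(\mathcal R)+f/2-(1-f/3)<\CCost(\mathcal R)-1+f$ and $\Psi_0\cup\tilde\Phi$ generates $\mathcal R$. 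In other words, the only place non-ergodicity enters the paper's argument is the choice of $A$ with prescribed conditional measure; the conditional-cost formalism is precisely designed to make the fibrewise selection problem you identify as the ``main obstacle'' disappear. Your route can certainly be completed, but it is heavier, and as written your final bound is the non-strict $\CCost(\mathcal R)-1+f$ (the statement asks for strictly less); shrinking your residue from $f/2$ to, say, $f/3$ fixes this trivially.
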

\begin{proof}
Begin with a graphing $\Phi$ which generates $\mathcal R$, and whose C-Cost is less than $f/2+\CCost(\mathcal R)$.
By Proposition \ref{maha}, we can find $A\subseteq X$ whose conditional measure is equal to $f/3$. Because $f$ is everywhere positive, $A$ is a complete section for $\mathcal R$, meaning that it meets the $\mathcal R$-class of almost every $x\in X$. Now the induction procedure as described in \cite[lemme II.8]{MR1728876} yields a treeing $\Phi_0$ of $\mathcal R_0$ whose C-cost is $(1-\mu_{\mathcal R}(A))$ and a graphing $\tilde\Phi$ of $\mathcal R$ whose C-cost is $(\CCost(\Phi)-(1-\mu_{\mathcal R}(A)))$ such that $\Phi_0$ and $\tilde\Phi$ generate $\mathcal R$. Then in particular $\Psi_0\cup \tilde\Phi$ generates $\mathcal R$. 
\end{proof}

The relation between the C-cost and the cost of the ergodic components is very simple, as shown by the following proposition.

\begin{prop}Let $\mathcal R$ be a pmp equivalence relation. Then the conditionnal cost of $\mathcal R$ is the function which associates to $x\in X$ the cost of $\mathcal R$ for the $\mathcal R$-ergodic measure whose support contains $x$. 
\end{prop}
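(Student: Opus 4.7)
The plan is to prove the equality as $M_{\mathcal R}$-measurable functions by establishing two inequalities. Denote by $(\mu_x)_{x\in X}$ the ergodic decomposition of $\mu$ with respect to $\mathcal R$ (so $\mu_x = \mu_{x'}$ whenever $\pi_{\mathcal R}(x) = \pi_{\mathcal R}(x')$), and write $c(x) = \Cost(\mathcal R, \mu_x)$ for the cost of $\mathcal R$ viewed as a pmp equivalence relation on $(X,\mu_x)$. The function $c$ is $M_{\mathcal R}$-measurable (this is essentially contained in Gaboriau's work).

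For the inequality $\CCost(\mathcal R) \geq c$, fix a graphing $\Phi = \{\varphi_i\}$ generating $\mathcal R$. The compatibility of $\mathbb E_{\mathcal R}$ with the ergodic decomposition yields $\mu_{\mathcal R}(A)(x) = \mu_x(A)$ almost everywhere, for every Borel $A \subseteq X$, and in particular $\CCost(\Phi)(x) = \sum_i \mu_x(\dom\varphi_i)$. Since $\Phi$ still generates $\mathcal R$ on the ergodic component at $x$ modulo $\mu_x$-null sets (for $\mu$-a.e. $x$), that sum is at least $c(x)$. The set of C-costs of generating graphings is downward directed (as pointed out in subsection \ref{CCost}), so its infimum can be realized along a countable decreasing sequence, which gives $\CCost(\mathcal R)(x) \geq c(x)$ almost everywhere.

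For the reverse inequality, it suffices to produce, for every everywhere positive $M_{\mathcal R}$-measurable $f$, a graphing $\Phi$ of $\mathcal R$ with $\CCost(\Phi) \leq c + f$ a.e. I would proceed by partitioning $Y_{\mathcal R}$ into countably many Borel pieces on which $c$ varies by less than $f$ (handling $\{c = +\infty\}$ separately by approximation from below and truncating the number of generators), and on each piece applying the Kuratowski--Ryll-Nardzewski selection theorem to the Polish space of sequences of partial isomorphisms. The input to the selection theorem is the Borel set of pairs $(y, \Phi)$ for which $\Phi$ generates $\mathcal R$ on the ergodic component above $y$ with $\sum_i \mu_y(\dom\varphi_i) \leq c(y) + f(y)$; gluing a Borel section across the partition yields the desired global graphing.

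The main obstacle is the measurable selection step: one must verify that the relations ``$\Phi$ generates $\mathcal R$ on the fiber above $y$'' and ``$\sum_i \mu_y(\dom\varphi_i) \leq c(y) + f(y)$'' define a Borel set with nonempty sections, which requires rewriting ``generates'' in terms of countable words in $\Phi$ acting on a Borel transversal and using the Borel dependence $y \mapsto \mu_y$ coming from Proposition \ref{zimnonergo}. A possibly smoother alternative, in the spirit of this paper, is to fix once and for all the hyperfinite subrelation $\mathcal R_0^Y \subseteq \mathcal R$ with the same invariant algebra provided by Proposition \ref{zimnonergo} together with a generating graphing of C-cost one, and to adapt Lemma \ref{troispointcinq} fiberwise: this reduces the selection to choosing the ``excess'' $\CCost(\mathcal R) - 1$ over a fixed hyperfinite backbone, which can be done globally rather than ergodic-component-wise.
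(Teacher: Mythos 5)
Your two-inequality scheme --- the easy bound by restricting a generating graphing to almost every ergodic component, and the converse via a measurable selection of nearly optimal fiberwise graphings --- is essentially the argument the paper itself relies on, since its proof consists of citing Proposition 18.4 of \cite{MR2095154}, whose proof is exactly this reformulated with conditional measures. One caution: your proposed ``smoother alternative'' via Lemma \ref{troispointcinq} does not actually bypass the selection step, because that lemma controls graphings in terms of the conditional cost $\CCost(\mathcal R)$ (an infimum over global graphings), not in terms of the fiberwise ergodic costs $c(y)$, so fixing the hyperfinite backbone still leaves you needing to choose the excess measurably in $y$ with fiberwise cost close to $c(y)-1$, which is the same selection problem.
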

\begin{proof}
This is a simple reformulation of the proof of Proposition 18.4.  in \cite{MR2095154}. \end{proof}

\section{The hyperfinite ergodic equivalence relation \texorpdfstring{$\mathcal R_0$}{R0}} \label{secro}

Let $2^\N=\{0,1\}^\N$, and for $n\in\N$, $2^n=\{0,1\}^n$. Given $s\in 2^n$, we define the basic clopen set $$N_s=\{x\in 2^\N: x_i=s_i\text{ for } 1\leq i\leq n\}.$$
We can see elements $a\in 2^n$ and $b\in2^\N\cup\bigcup_{n\in\N}2^n$ as words in $\{0,1\}$, and denote their concatenation by $a\smallfrown b$. For $\epsilon\in\{0,1\}$ and $n\in\N$,  $\epsilon^n$ is the word $(x_i)_{i=1}^n\in 2^n$ defined by $x_i=\epsilon$.

\subsection{Dyadic permutations}\label{secdyad}

Let $n\in\N$. We view $\mathfrak S_{2^n}$ as the group of permutations of the set $2^n$. This defines a natural inclusion $\alpha_n:\mathfrak S_{2^n}\into\mathfrak S_{2^{n+1}}$ given by 
$$\alpha_n(\sigma)(x_1,...,x_{n+1})=(\sigma(x_1,...,x_n),x_{n+1})$$
for $\sigma\in\mathfrak S_{2^n}$ and $(x_1,...,x_{n+1})\in2^{n+1}$. Let $\mathfrak S_{2^\infty}$ be the inductive limit of these groups, called the group of \textbf{dyadic permutations}.

Let us now define a function $\sqrt\cdot:\mathfrak S_{2^p}\to \mathfrak S_{2^{p+1}}$ such that for any $\sigma\in \mathfrak S_{2^p}$, $\sigma=(\sqrt \sigma)^2$ and $\sqrt \sigma$ has same support as $\sigma$ (seeing $\mathfrak S_{2^p}$ as a subgroup of $\mathfrak S_{2^{p+1}}$). The idea is just to make $\sqrt \sigma$ ‘‘twice as slow'', as  Figure \ref{figsqrt} shows.

\begin{figure}[htbp]\label{figsqrt}
\centering
\begin{tikzpicture}[scale=2]
	\begin{pgfonlayer}{nodelayer}
	\node [style=none] (N0) at (-1, 1.25) {$0$};
	\node [style=none] (N1) at (1, 1.25) {$1$};
	\node [style=none] (N00) at (-1.5, -1.25) {$00$};
	\node [style=none] (N01) at (-0.5, -1.25) {$01$};
	\node [style=none] (N10) at (0.5, -1.25) {$10$};
	\node [style=none] (N00) at (1.5, -1.25) {$11$};	
		\node [style=none] (0) at (-2, 1) {};
		\node [style=none] (1) at (-2, 0) {};
		\node [style=none] (2) at (0, 1) {};
		\node [style=none] (3) at (0, 0) {};
		\node [style=none] (4) at (2, 0) {};
		\node [style=none] (5) at (2, 1) {};
		\node [style=none] (6) at (1, 0) {};
		\node [style=none] (7) at (1, -1) {};
		\node [style=none] (8) at (0, -1) {};
		\node [style=none] (9) at (2, -1) {};
		\node [style=none] (10) at (-1, -1) {};
		\node [style=none] (11) at (-2, -1) {};
		\node [style=none] (12) at (-1, 0) {};
		\node [style=new] (13) at (-1.5, -0.5) {};
		\node [style=new] (14) at (-0.5, -0.5) {};
		\node [style=new] (15) at (0.5, -0.5) {};
		\node [style=new] (16) at (1.5, -0.5) {};
		\node [style=new2] (17) at (-1, 0.5) {};
		\node [style=new2] (18) at (1, 0.5) {};

	\end{pgfonlayer}
	\begin{pgfonlayer}{edgelayer}
		\draw (0.center) to (5.center);
		\draw (5.center) to (9.center);
		\draw (9.center) to (11.center);
		\draw (11.center) to (0.center);
		\draw (1.center) to (4.center);
		\draw (6.center) to (7.center);
		\draw [in=-90, out=90] (8.center) to (2.center);
		\draw (12.center) to (10.center);

		\draw [bigtiparrow2,bend left=20] (17) to (18);
		\draw [bigtiparrow2,bend left=20] (18) to (17);		
		\draw [bigtiparrow2,bend left=45] (14) to (15);
		\draw [bigtiparrow2,bend left=45] (15) to (16);
		\draw [bigtiparrow2, bend left=45] (13) to (14);
		\draw [bigtiparrow2,bend left=20] (16) to (13);
				\node [style=new] (19) at (0, 0.5) {$\sigma$};
		\node [style=new] (20) at (0, -0.5) {$\sqrt \sigma$};
	\end{pgfonlayer}
\end{tikzpicture}
\caption{A transposition $\sigma\in\mathfrak S_2$ and its square root $\sqrt \sigma\in \mathfrak S_4$.}
\end{figure}
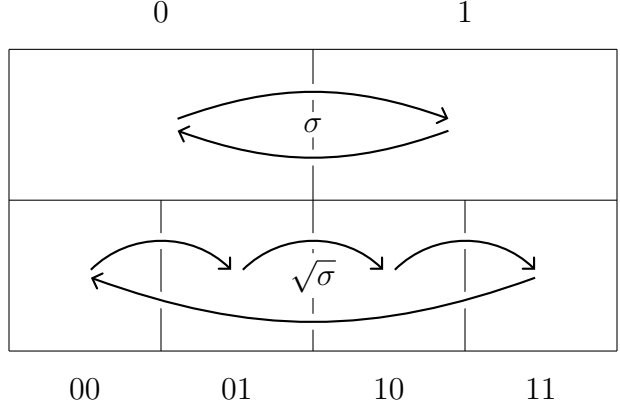

To be more precise, if $A\subseteq 2^p$ is the support of $\sigma$, define $\sqrt \sigma$ by $$\sqrt \sigma(s\smallfrown\epsilon)=\left\{\begin{array}{ll}s\smallfrown 1 & \text{if } s\in A\text{ and }\epsilon=0 \\\sigma(s)\smallfrown 0 & \text{if }s\in A\text{ and }\epsilon=1  \\s\smallfrown \epsilon & \text{if }s\not\in A\end{array}\right..$$It is easily checked that $\sqrt \sigma^2=\sigma$, and that $\sqrt \sigma$ has the same support as $\sigma$. 

This allows us to define functions $\sqrt[2^q]{\cdot}: \mathfrak S_{2^p}\to \mathfrak S_{2^{p+q}}$ inductively on $q\in\N$  by $\sqrt[2^{q+1}]T=\sqrt{\sqrt[2^q]T}$. 
\\

The key feature of $\mathfrak S_{2^\infty}$ is that it acts in a measure-preserving way on $(2^\N,\lambda)$ by, for $\sigma\in\mathfrak S_{2^n}$, $s\in 2^n$ and $x\in 2^\N$,
$$\sigma(s\smallfrown x)=\sigma(s)\smallfrown x.$$
Let $\mathcal R_0$ be the measure-preserving equivalence relation generated by this countable group. It is immediate that
$$(x_i)_{i\in\N}\,\mathcal R_0\,(y_i)_{i\in\N}\iff\exists n\in\N\text{ such that }\forall i\geq n, x_i=y_i.$$
To avoid confusion, when we see $\mathfrak S_{2^n}$ as a subgroup of $[\mathcal R_0]$, we will denote it by $\Stild_{2^n}$. We still have square root functions  $\sqrt[2^q]{\cdot}: \Stild_{2^p}\to \Stild_{2^{p+q}}$.\\

The following proposition belongs to the folklore, for a proof see e.g. \cite[Prop. 3.8]{MR2583950}.
\begin{prop}\label{permdense}The group of dyadic permutations is dense in the full group of $\mathcal R_0$.
\end{prop}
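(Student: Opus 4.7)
The plan is a two-step approximation going through the intermediate finite pmp subrelations $\mathcal R_N \subseteq \mathcal R_0$ generated by $\Stild_{2^N}$, namely $x\,\mathcal R_N\,y \iff x_i = y_i$ for all $i > N$. Since $\mathcal R_0 = \bigcup_N \mathcal R_N$ and $\Stild_{2^N} \subseteq [\mathcal R_N] \subseteq [\mathcal R_0]$, it suffices to show (i) $\bigcup_N [\mathcal R_N]$ is uniformly dense in $[\mathcal R_0]$, and (ii) for each $N$, every $T' \in [\mathcal R_N]$ is a uniform limit of elements of $\bigcup_{k \geq 0}\Stild_{2^{N+k}}$.

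For (i), fix $T \in [\mathcal R_0]$ and $\epsilon > 0$. Since the level function $n(T,x) = \min\{n : T(x)_i = x_i \text{ for all } i > n\}$ is a.e.\ finite by definition of $[\mathcal R_0]$, one can pick $N$ with $\lambda(B) > 1 - \epsilon$, where $B = \{x : n(T,x) \leq N\}$. On $B$, $T$ preserves $\mathcal R_N$-classes. Writing $x = s \smallfrown y$ with $s \in 2^N$ and $y \in 2^\N$, the restriction of $T$ to $[x]_{\mathcal R_N} \cap B$ gives, at fixed $y$, a partial injection between two subsets of $2^N$ of equal size; extend it to a permutation $\pi_y \in \mathfrak S_{2^N}$ by matching the complementary subsets via the lexicographic order on $2^N$. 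The resulting $T'(s \smallfrown y) := \pi_y(s) \smallfrown y$ is Borel in $y$, lies in $[\mathcal R_N]$, and agrees with $T$ on $B$, so $d_u(T,T') < \epsilon$.

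For (ii), any $T' \in [\mathcal R_N]$ takes the form $T'(s \smallfrown y) = \sigma_y(s) \smallfrown y$ for some measurable map $y \mapsto \sigma_y$ into the \emph{finite} group $\mathfrak S_{2^N}$. The level sets $E_\tau = \{y : \sigma_y = \tau\}$ ($\tau \in \mathfrak S_{2^N}$) partition $2^\N$ into finitely many Borel pieces. Since the dyadic cylinder algebra is dense in $\MAlg(2^\N,\lambda)$, for $k$ large enough each $E_\tau$ can be approximated up to $\epsilon/|\mathfrak S_{2^N}|$ by a union of length-$k$ cylinders, and these approximations may be adjusted (by processing the $\tau$'s in order and taking successive differences) to form an actual partition of $2^\N$ indexed by a map $\pi\colon 2^k \to \mathfrak S_{2^N}$. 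The corresponding dyadic permutation $\sigma(s \smallfrown t \smallfrown z) = \pi_t(s) \smallfrown t \smallfrown z$ lies in $\Stild_{2^{N+k}}$ and satisfies $d_u(\sigma, T') < \epsilon$.

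Combining (i) and (ii) with $\epsilon/2$ at each step completes the proof. The main delicate point is (i), where the good set $B$ is not $\mathcal R_N$-invariant, so one cannot naively set $T' = T$ on $B$ and $T' = \mathrm{id}$ elsewhere; the class-by-class lexicographic extension gives a genuinely measurable $T' \in [\mathcal R_N]$. Step (ii) is essentially routine once one observes that $\sigma_y$ takes values in a \emph{finite} group, reducing the question to approximating finitely many Borel sets by dyadic cylinders.
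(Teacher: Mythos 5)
Your proof is correct, and it is essentially the standard argument behind this folklore fact: the paper itself gives no proof (it defers to \cite[Prop.~3.8]{MR2583950}), and your step (i) is exactly the class-by-class Borel extension argument the paper uses in its lemma on density of $\bigcup_n[\mathcal R_n]$ for increasing finite subrelations, while step (ii) is the usual approximation of a measurable $\mathfrak S_{2^N}$-valued cocycle by one constant on dyadic cylinders. No gaps worth flagging; the lexicographic extension in (i) is indeed the right fix for $B$ not being $\mathcal R_N$-invariant.
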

\subsection{The odometer \texorpdfstring{$T_0$}{T0}}

The \textbf{odometer} is the map $T_0\in\Aut(2^\N,\lambda)$ defined by
$$(x_i)_{i\in\N}\in 2^\N\mapsto 0^{n-1}1\smallfrown (x_i)_{i>n},$$
where $n$ is the first integer such that $x_n=0$ (note that this is well defined on a set of full measure). This can be understood as adding $(1,0,0,...)$ to $(x_i)_{i\in\N}$ with right carry. 

The map $T_0$ belongs to $\Aut(2^\N,\lambda)$,  an explicit inverse being given by 
$$T_0\inv: (x_i)\mapsto 1^{n-1}0\smallfrown (x_i)_{i>n}$$
where $n$ is the first integer such that $x_n=1$. This inverse can of course be understood as substraction with  right borrow. Moreover, one can check that $T_0$ generates $\mathcal R_0$.

Let $n\in\N$, then we define a ‘‘finite odometer'' $\sigma_n\in\mathfrak S_{2^n}$ by 
$$\sigma_n((s_i)_{i=1}^n)=\left\{\begin{array}{ll}0^n &\text{if }(s_i)=1^n  \\0^{k-1}1\smallfrown (s_i)_{i>k} & \text{else, where }k \text{ is the first integer such that }s_k=0. \end{array}\right.
$$
We denote by $T_n$ the corresponding element in $\Stild_{2^n}$.
Note that by definition, $T_n$ and $T_0$ coincide on $2^\N\setminus N_{1^n}$.

\section{Two topological generators for the full group of \texorpdfstring{$\mathcal R_0^Y=\Delta_Y\times\mathcal R_0$}{R0Y}}

In this section, our aim is to prove the following theorem, which gives two topological generators for non necessarily ergodic aperiodic hyperfinite equivalence relations, the second topological generator having an arbitrarily small support.

\begin{thm}\label{thmgenEO}
Let $(Y,\nu)$ be a standard probability space, possibly with atoms, and let $(X,\mu)=(Y\times 2^\N, \nu\times\lambda)$ where $\lambda$ is the standard Bernoulli measure.. Let $(Y_k)_{k\in\N}$ be a partition of $Y$. Finally, let $\mathcal R_0^Y=\Delta_Y\times\mathcal R_0$ be the pmp equivalence relation on $(X,\mu)=(Y\times 2^\N, \nu\times\lambda)$ defined by
$$(x,x')\,\mathcal R_0^Y\,(y,y')\iff x=y\text{ and }x'\,\mathcal R_0\,y'.$$

Then there exists  $U\in[\mathcal R_0^Y]$ whose support is contained in $Z=\bigsqcup_{k\in\N} Y_k\times (N_{0^{k+1}}\cup N_{1^{k+1}})$ such that  the following properties are satisfied for all $m\geq 0$:

\begin{enumerate}[(I)]\item\label{Ueng} The full group of $\mathcal R_0^Y$ is topologically generated by $T=\mathrm{id}_Y\times T_0$ and  $U^{2^m}$, where $T_0$ is the odometer.
\item\label{CUeng} For every odd cycle $C$ with disjoint support from $U^{2^m}$, the closed group generated by $(CU)^{2^m}$ contains $U^{2^m}$
\end{enumerate}
\end{thm}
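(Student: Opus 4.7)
The plan is to construct $U$ explicitly, piece by piece: on each $Y_k \times (N_{0^{k+1}} \cup N_{1^{k+1}})$, I identify $N_{0^{k+1}} \cup N_{1^{k+1}}$ with $\{0,1\} \times 2^\N$ via the prefix indicator and take $U$ there to be a carefully chosen dyadic $2^{k+2}$-cycle, obtained from the $\sqrt{\cdot}$ construction of Section \ref{secdyad} starting from the prefix-swap. This gives a $U$ supported exactly in $Z$, with dyadic orbit size $2^{k+2}$ on piece $k$, and whose iterate $U^{2^m}$ has support equal to the union of pieces $k$ with $k+2 > m$ (the ones whose $U$-orbits exceed $2^m$). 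The variation of $U$ with $k$ across $(Y_k)$ is what allows one to recover fiberwise-varying elements of $[\mathcal{R}_0^Y]$ in the closure of $\langle T, U^{2^m}\rangle$, since $T = \mathrm{id}_Y \times T_0$ acts identically on every fiber of $Y$.

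For (I), the plan is to show that $\overline{\langle T, U^{2^m}\rangle}$ contains all $Y$-measurable families of dyadic permutations in $\Stild_{2^n}$; density in $[\mathcal{R}_0^Y]$ then follows from Proposition \ref{permdense} together with Theorem \ref{ktdense}. Conjugation of $U^{2^m}$ by powers $T^j$ of $\mathrm{id}_Y \times T_0$ transports its support across the dyadic tree of $2^\N$, and suitable group-theoretic combinations of these conjugates realize, in the closure, the basic transpositions of $\Stild_{2^n}$ for every $n$, hence (by Proposition \ref{permdense}) all dyadic permutations acting fiberwise. The $k$-dependence of $U$ then lets one reach \emph{non-diagonal} fiberwise families: since $\mu(\supp U^{2^m} \cap (Y_k \times 2^\N)) \leq 2^{-k}\nu(Y_k)$, arbitrary $Y$-measurable elements of $[\mathcal{R}_0^Y]$ are approximated by families that are piecewise constant along sufficiently fine refinements of $(Y_k)$, invoking Theorem \ref{ktdense} for the final density step.

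For (II), observe first that $\supp U^{2^m}$ is $U$-invariant (being a union of $U$-orbits whose period does not divide $2^m$); since $C$ acts trivially there by disjoint supports, $CU = U$ on $\supp U^{2^m}$ and therefore $(CU)^{2^m} = U^{2^m}$ there. Decompose $(CU)^{2^m} = U^{2^m}\cdot V$ where $V$ is supported on $(\supp U^{2^m})^c$ and commutes with $U^{2^m}$ by disjoint supports. The key algebraic step is to show that $V$ has \emph{odd} order on each orbit: on $(\supp U^{2^m})^c$, the orbits of $U$ are dyadic of size dividing $2^m$ by the construction, and since $C$ is purely odd and $\supp U = Z$ (with $U = \mathrm{id}$ off $Z$), chaining an odd-order cycle with dyadic cycles of $2$-part $\leq 2^m$ cannot yield $CU$-orbits whose $2$-adic valuation exceeds $m$; the $2^m$-th power then absorbs the $2$-part and leaves only odd orders. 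Writing $((CU)^{2^m})^n = U^{2^m n}V^n$ and applying CRT to choose $n$ with $n\equiv 1\pmod{2^K}$ and $n$ divisible by a large LCM of $V$'s (odd) orders, one gets $V^n \approx \mathrm{id}$ and $U^{2^m n} \approx U^{2^m}$ uniformly as $K\to\infty$, hence $U^{2^m}\in\overline{\langle (CU)^{2^m}\rangle}$.

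The main technical obstacle is the orbit-structure bound underlying (II): while every single $C$-orbit and every single $U$-orbit has controlled $2$-part, a priori the $CU$-orbits can be longer than either, and pathological scenarios where $C$ chains together $U$-fixed points can inflate the $2$-adic valuation of a $CU$-orbit beyond $2^m$. Ruling this out requires the specific feature that in our construction $\supp U$ is exactly $Z$ (so $U$ has no isolated fixed points inside $Z$) and that the partition structure of $\supp U^{2^m}$ and its complement align with the dyadic cycles, requiring careful case analysis of $CU$-orbits that straddle $\supp U$ and $Z^c$. The fiberwise refinement in (I) is then routine bookkeeping in the spirit of the ergodic argument of \cite{gentopergo}.
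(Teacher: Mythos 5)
Your construction of $U$ cannot yield property (I), and this is the heart of the theorem. On each piece $Y_k$ your $U$ has the form $\mathrm{id}_{Y_k}\times u_k$ for a single dyadic cycle $u_k\in[\mathcal R_0]$; viewed in the identification $[\mathcal R_0^Y]\simeq\LL^0(Y,\nu,[\mathcal R_0])$ it is a.e.\ constant on each $Y_k$, and $T=\mathrm{id}_Y\times T_0$ is constant on all of $Y$. Since the group operations are pointwise in $y$, every word in $T$ and $U^{2^m}$ is again a.e.\ constant on each $Y_k$, and this property passes to $d_u$-limits (convergence in the $\LL^1$ metric gives a.e.\ convergence along a subsequence, and a sequence of constants has at most one limit). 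Hence $\overline{\la T,U^{2^m}\ra}$ lies in the proper closed subgroup of maps $Y\to[\mathcal R_0]$ constant on each $Y_k$: for instance if $Y$ is non-atomic and $Y_0=Y$, you only obtain the diagonal copy of $[\mathcal R_0]$, not $[\mathcal R_0^Y]$. Your appeal to ever finer refinements of $(Y_k)$ and to Theorem \ref{ktdense} does not repair this, because no element of the generated group varies along any refinement strictly finer than $(Y_k)$. In the theorem the partition $(Y_k)$ only dictates where the support of $U$ may sit (so that it can be made conditionally small); the $Y$-dependence needed for density must be encoded in $U$ itself. This is what the paper does: it fixes a countable dense subalgebra $\mathcal B$ of $\MAlg(Y,\nu)$ compatible with $Z$, an enumeration $(B_l)$ of $\mathcal B$ in which every element appears infinitely often, and sets $U=\prod_{l}\iota_{B_l}(\sqrt[2^{l-1}]{U_{n_l}})$ for fast-growing depths $n_l$; the commuting factors are then isolated by powers, $U^{2^{k-1}}=\iota_{B_k}(U_{n_k})\cdot W_k$ with $W_k$ small, and Lemma \ref{generiota} (conjugation by powers of $T$), Proposition \ref{permdense} and Lemma \ref{iotadense} give density. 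The infinite repetition of each $B$ and the shift-invariance of this product are also exactly what makes (I) hold for every power $U^{2^m}$ simultaneously.

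For (II) you have correctly identified, but not overcome, the obstacle created by your own route: the claim that the complementary factor $V$ of $(CU)^{2^m}$ has odd order is unjustified, since $C$ can chain together short $U$-orbits and produce $CU$-orbits whose $2$-part exceeds $2^m$ (already $U$-orbits of size $2$ interleaved with $C$-orbits of size $3$ can do this), so the proposed CRT step has nothing to stand on. The paper's argument avoids controlling $CU$-orbits altogether: it only uses that $V=U^{2^m}$ has all orbits of $2$-power size and that $\supp C\cap\supp V=\emptyset$; writing $V=V_kW_k$ with $V_k$ the restriction to orbits of size at most $2^k$ and choosing exponents $m_kP_k\equiv 1\pmod{2^k}$, where $P_k$ is a product of the (odd) orbit lengths of $C$, one gets $(CV)^{m_kP_k}=(C^{P_k})^{m_k}V^{m_kP_k}\to V$. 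In the applications the support of $C$ is disjoint from that of $U$ itself, so $(CU)^{2^m}=C^{2^m}U^{2^m}$ with $C^{2^m}$ again an odd cycle disjoint from $\supp U^{2^m}$, and this argument applies verbatim; no case analysis of straddling orbits is needed.
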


The following two sections are devoted to basic facts about the full group of $\mathcal R$, which will be used in the proof of Theorem \ref{thmgenEO} that we give afterwards.

\subsection{Some subgroups which topologically generate \texorpdfstring{$[\mathcal R_0^Y]$}{[R0Y]}}
\label{subtopgen}
Let $\mathcal R_0^Y$ be as in the statement of the previous theorem. By definition, the full group of $\mathcal R_0^Y$ is isomorphic to the group of measurable maps from $(Y,\nu)$ to $[\mathcal R_0]$, denoted by $\LL^0(Y,\nu,[\mathcal R_0])$. This identification is isometric when we put on $\LL^0(Y,\nu,[\mathcal R_0])$ the $\LL^1$ metric $d_1$ defined by
$$\forall f,g: Y\to[\mathcal R_0],\;\;\;\;d_1(f,g)=\int_Yd_u(f(y),g(y))d\nu(y).$$
Now given a non-null subset $A$ of $Y$, we can define an injective group morphism $\iota_A:[\mathcal R_0]\to [\mathcal R_0^Y]$ by letting $\iota_A(V)$ be the element of $\LL^0(Y,\nu,[\mathcal R_0])$ which sends every $y\in A$ to $V$, and every $y\not\in A$ to $\mathrm{id_{2^\N}}$. This morphism is bi-Lipschitz, and if $A, B$ are disjoint subsets of $Y$ and $V,W\in[\mathcal R_0]$, the (commuting) product $i_A(V)i_B(W)$ can be understood as the map

$$y\in Y\mapsto\left\{\begin{array}{cl}V & \text{ if }y\in A, \\W & \text{ if }y\in B, \\\mathrm{id}_{2^\N} & \text{ otherwise.}\end{array}\right..$$

Since $[\mathcal R_0]$ is separable, the measurable maps from $(Y,\nu)$ to $[\mathcal R_0]$ with countable range are dense in $\LL^0(Y,\nu,[\mathcal R_0])$, which in turn yields that the measurable maps with finite range are dense in $\LL^0(Y,\nu,[\mathcal R_0])$. But every such map $f$ can be decomposed as a product

$$ f= \iota_{A_1}(V_1)\cdots\iota_{A_n}(V_n)$$
where $V_i\in [\mathcal R_0]$ and the $A_i$'s partition $Y$.  Suppose now that $\mathcal B$ is a dense  subalgebra of $\MAlg(Y,\nu)$, we may then find $\tilde f$ arbitrarily close to $f$, where 
$$\tilde f= \iota_{B_1}(V_1)\cdots\iota_{B_n}(V_n)$$
and the $B_i$'s belong to $\mathcal B$. We have proved the following lemma:

\begin{lem}\label{iotadense}The full group of $\mathcal R_0^Y$ is topologically generated by the set $$\bigcup_{B\in\mathcal B}\iota_B([\mathcal R_0]),$$
where $\mathcal B$ is any dense subalgebra of $\MAlg(Y,\nu)$.
\end{lem}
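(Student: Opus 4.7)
The plan is to exploit the identification $[\mathcal R_0^Y]\simeq \mathrm L^0(Y,\nu,[\mathcal R_0])$, which is already spelled out in the paragraph preceding the lemma, and then to argue density in the $d_1$ metric by a two-step approximation: first replace an arbitrary measurable map by one with finite range, then replace the finite partition of $Y$ that underlies it by one drawn from $\mathcal B$. The outcome produces, up to arbitrarily small $d_1$-error, a product of elements of $\bigcup_{B\in\mathcal B}\iota_B([\mathcal R_0])$.

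In more detail, first I fix $f\in \mathrm L^0(Y,\nu,[\mathcal R_0])$ and $\varepsilon>0$. Since $[\mathcal R_0]$ is separable (it is Polish for the uniform metric), countable-range measurable maps are $d_1$-dense in $\mathrm L^0(Y,\nu,[\mathcal R_0])$: pick a countable $d_u$-dense subset $\{V_k\}_{k\in\N}$ of $[\mathcal R_0]$, partition $Y$ according to a nearest-point choice, and note that the integrand in $d_1$ is bounded by $1$ so the tail contributes arbitrarily little. Then truncating a countable partition to a sufficiently large finite one (and sending the remaining piece to the identity) yields a measurable map $\tilde f$ with finite range satisfying $d_1(f,\tilde f)<\varepsilon/2$. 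Such a $\tilde f$ is, by construction,
\[\tilde f = \iota_{A_1}(V_1)\cdots \iota_{A_n}(V_n),\]
where $(A_1,\dots,A_n)$ is a measurable partition of $Y$ and $V_1,\dots,V_n\in[\mathcal R_0]$; the factors commute because the $\iota_{A_i}(V_i)$ have pairwise disjoint supports contained in $A_i\times 2^\N$.

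Next I approximate each $A_i$ by some $B_i\in\mathcal B$. Since $\mathcal B$ is dense in $\mathrm{MAlg}(Y,\nu)$, I can choose $B_i\in\mathcal B$ with $\nu(A_i\triangle B_i)<\varepsilon/(4n)$ for every $i$. A trivial computation shows $d_1(\iota_{A_i}(V_i),\iota_{B_i}(V_i))\leq \nu(A_i\triangle B_i)$, because the two elements agree outside $(A_i\triangle B_i)\times 2^\N$. The map $\iota_{B_i}$ need not land inside a group of pairwise commuting pieces if the $B_i$'s overlap, but this is irrelevant: we only want the product $\iota_{B_1}(V_1)\cdots \iota_{B_n}(V_n)$ to lie in the subgroup generated by $\bigcup_{B\in\mathcal B}\iota_B([\mathcal R_0])$, which it automatically does. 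Finally, writing
\[g := \iota_{B_1}(V_1)\cdots \iota_{B_n}(V_n),\]
and using the triangle inequality and the fact that left/right multiplication on $[\mathcal R_0^Y]$ is $d_u$-isometric (hence $d_1$-isometric after integration), I get
\[d_1(\tilde f, g)\leq \sum_{i=1}^n d_1(\iota_{A_i}(V_i),\iota_{B_i}(V_i))<\varepsilon/4,\]
so $d_1(f,g)<\varepsilon$. This shows the subgroup generated by $\bigcup_{B\in\mathcal B}\iota_B([\mathcal R_0])$ is dense in $[\mathcal R_0^Y]$.

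The only mildly delicate point is the passage from countable-range to finite-range maps, which must be done with some care so that the error is controlled uniformly in the $d_1$-norm; but since the integrand $d_u(f(y),\tilde f(y))$ is uniformly bounded by $1$, truncating a countable partition past a large enough index controls the residual integral, and I expect no real obstacle. The rest is bookkeeping.
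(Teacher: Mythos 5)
Your argument is correct and follows essentially the same route as the paper: approximate an element of $\mathrm L^0(Y,\nu,[\mathcal R_0])$ by a finite-range map using separability of $[\mathcal R_0]$, write it as a commuting product $\iota_{A_1}(V_1)\cdots\iota_{A_n}(V_n)$, and then replace the $A_i$'s by nearby elements of $\mathcal B$, controlling the error via bi-invariance of the metric. The paper's proof is exactly this (presented in the paragraph preceding the lemma), so there is nothing to add.
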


\subsection{Two generators for many finite subgroups}\label{gentun}

Let $n\geq 2$, and define $\tau_n\in\mathfrak S_{2^n}$ to be the transposition which exchanges $0^{n-1}1$ and $1^{n-1}0$. Let $U_n$ be the corresponding element of $\Stild_{2^n}$, that is, the group element implementing the action of $\tau_n$ on $2^\N$. Note that the support of $U_n$ is $N_{0^{n-1}1}\cup N_{1^{n-1}0}$, so that the supports of the $U_n$'s are all disjoint.

\begin{lem}\label{generiota}For all $A\subseteq Y$ non-null, $\iota_A(\Stild_{2^n})$ is contained in the group generated by $\iota_A(U_n)$ and $T$. Moreover, there exists $\kappa(n)\in\N$, independent of $A$, such that every element of $\iota_A(\Stild_{2^n})$ can be written as a word in $T$ and $\iota_A(U_n)$ of size less than $\kappa(n)$.
\end{lem}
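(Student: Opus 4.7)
The plan is to reduce the whole question to a computation in $[\mathcal R_0]$, and then to exhibit enough ``adjacent transpositions'' of $\Stild_{2^n}$ as single conjugates of $\iota_A(U_n)$ by powers of $T$.

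First I would record the conjugation identity
\[ T^k\, \iota_A(V)\, T^{-k} \;=\; \iota_A\bigl(T_0^k\, V\, T_0^{-k}\bigr) \quad\text{for every } V\in [\mathcal R_0] \text{ and } k\in\Z, \]
which follows directly from $T=\mathrm{id}_Y\times T_0$ acting only on the second coordinate while $\iota_A(V)$ equals $V$ on $A$-fibres and $\mathrm{id}_{2^\N}$ elsewhere. Consequently, it will suffice to find, inside $[\mathcal R_0]$ and in a uniformly bounded number of letters in $\{T_0^{\pm 1},U_n\}$, a generating set of $\Stild_{2^n}$ whose elements all have total $T_0$-exponent zero; applying $\iota_A$ then turns them into words of the same length in $T$ and $\iota_A(U_n)$.

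The core computation I would then carry out is in the $2$-adic model: identify $(2^\N,\lambda)$ with $(\Z_2,\mathrm{Haar})$ via $(x_i)_{i\geq 1}\mapsto \sum_{i\geq 1} x_i\, 2^{i-1}$, so that $T_0$ becomes $v\mapsto v+1$. Under this identification $N_{s_0}=2^{n-1}+2^n\Z_2$ and $N_{s_1}=2^{n-1}-1+2^n\Z_2$, and $U_n$ is the involution sending $v\in N_{s_0}$ to $v-1$ and $v\in N_{s_1}$ to $v+1$. Therefore
\[ T_0^k U_n T_0^{-k}(v) \;=\; \begin{cases} v-1 & \text{if } v\in k+2^{n-1}+2^n\Z_2,\\ v+1 & \text{if } v\in k+2^{n-1}-1+2^n\Z_2,\\ v & \text{otherwise.}\end{cases} \]
The key observation is then: as long as $-(2^{n-1}-1)\leq k\leq 2^{n-1}-1$, the two values $j:=k+2^{n-1}$ and $j-1$ both lie in $\{0,1,\ldots,2^n-1\}$ (no wrap-around modulo $2^n$), and the level-$n$ dyadic transposition swapping the cylinders of values $j$ and $j-1$ is itself given by $v\mapsto v\mp 1$ on these cosets. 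Hence for each such $k$, $T_0^k U_n T_0^{-k}$ is genuinely an element of $\Stild_{2^n}$, namely the transposition of two consecutive cylinders in the cyclic order induced by $\sigma_n$. Letting $k$ range over the $2^n-1$ admissible integers yields all the $2^n-1$ consecutive transpositions of $\mathfrak S_{2^n}$, which are well known to generate $\mathfrak S_{2^n}$.

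To conclude, I would invoke the standard bubble-sort bound: every element of $\mathfrak S_{2^n}$ is a product of at most $\binom{2^n}{2}$ consecutive transpositions. Each of these, as a word in $T$ and $\iota_A(U_n)$ via the identity of the first paragraph, has length $2|k|+1\leq 2^n-1$, so $\kappa(n):=\binom{2^n}{2}(2^n-1)$ gives a bound depending only on $n$. The main subtlety, and the only place where the argument could break, is the ``no wrap-around'' condition: for $|k|\geq 2^{n-1}$ the conjugate $T_0^k U_n T_0^{-k}$ still permutes level-$n$ cylinders correctly but its tail action differs from the dyadic swap by a $2^n$-translation, so it does not belong to $\Stild_{2^n}$. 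Fortunately the admissible range contains exactly the right number of values to produce all consecutive transpositions, which is just enough.
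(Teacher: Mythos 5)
Your proof is correct and follows essentially the same route as the paper: conjugating $\iota_A(U_n)$ by powers of the odometer (restricted to the range where no wrap-around past $N_{1^n}$ occurs) produces all the adjacent transpositions of the level-$n$ cylinders, which generate $\Stild_{2^n}$, with a length bound depending only on $n$. The only difference is presentational — you compute directly with $T_0$ in the $2$-adic model, while the paper conjugates by the finite odometer $T_n$ and then transfers to $T$ using that $T_0$ and $T_n$ agree off $N_{1^n}$ — and your explicit $\kappa(n)=\binom{2^n}{2}(2^n-1)$ just makes quantitative what the paper deduces from finiteness.
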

\begin{proof}Recall that $T_n\in\Stild_{2^n}$ is obtained from the ‘‘finite odometer'' $\sigma_n\in \mathfrak S_{2^n}$, which acts transitively on $2^n$.  And $U_n$ corresponds to a transposition $\tau_n$ which permutes $1^{n-1}0$ and $0^{n-1}1=\sigma_n(1^{n-1}0)$. For $i\in\{1,...,2^n\}$, let $a_i=\sigma_n^{i-1}(0^n)$, and note that $a_{2^n}=1^n$. For $i\in\{1,...,2^n-1\}$, let $\rho_i$ be the transposition exchanging $a_i$ and $a_{i+1}$, and let $V_i$ be the corresponding element of $\Stild_{2^n}$. Note that  by construction the $\rho_i$'s generate $\mathfrak S_{2^n}$, so that the $V_i$'s generate $\Stild_{2^n}$. Also, since $a_{2^{n-1}}=1^{n-1}0$, we have $\rho_{2^{n-1}}=\tau_n$. 

Then observe that, for $j\in\{-2^{n-1}+1,...,2^{n-1}-1\}$, we have the conjugation relation
$$\sigma_n^j\rho_{2^{n-1}}\sigma_n^{-j}=\rho_{2^{n-1}+j}.$$
Going back to $[\mathcal R_0]$, this yields
$$T_n^jU_nT_n^{-j}=V_{2^{n-1}+j}.$$
But because $T$ and $\iota_A(T_n)$ coincide on $A\times (2^\N\setminus N_{1^n})$, and $\iota_A(U_n), \iota_A(V_{2^{n-1}+j})$ act trivially outside of $A\times 2^\N$, the previous formula yields
$$T^j\iota_A(U_n)T^{-j}=\iota_A(V_{2^{n-1}+j}).$$
So the group generated by $T$ and $\iota_A(U_n)$ contains all the $\iota_A(V_i)$ for $1\leq i\leq 2^n-1$. Since the $V_i$'s generate $\Stild_{2^n}$, the group generated by $T$ and $\iota_A(U_n)$ contains $\iota_A(\Stild_{2^n})$. And the second part of the lemma follows from the finiteness of $\Stild_{2^n}$ and the fact that the construction is independent from the subset $A$ of $Y$ we chose.
\end{proof}

We now fix once and for all a function $n\mapsto \kappa(n)$ given by the previous lemma.

\subsection{Two topological generators for \texorpdfstring{$[\mathcal R_0^Y]$}{[R0Y]}}

We now begin the proof of Theorem \ref{thmgenEO}. Let $(X,\mu)=(Y\times 2^\N,\nu\times\lambda)$ and $\mathcal R_0^Y=\Delta_Y\times\mathcal R_0$ be the equivalence relation $(x,y)\,\mathcal R_0^Y\,(x',y')$ iff $x=x'$ and $y\,\mathcal R_0\,y'$. Note that by Dye's theorem, every aperiodic singly generated equivalence relation is orbit equivalent to such an $\mathcal R_0^Y$.

We also define $T=\mathrm{id}_Y\times T_0\in[\mathcal R_0^Y]$, where $T_0$ is the odometer.

We first focus on property (\ref{Ueng}) in the case $m=0$, and we will see that the constructed $U$ also satisfies properties (\ref{Ueng}) and (\ref{CUeng}) for all $m\in\N$. So we fix a partition $(Y_k)_{k\in\N}$ of $Y$, and we want to find $U$ such that its support is a subset of
$$Z=\bigsqcup_{k\in\N} Y_k\times (N_{0^{k+1}}\cup N_{1^{k+1}}),$$
and the set $\{T,U\}$ topologically generates $[\mathcal R_0^Y]$.
Note that the conditional measure of $Z$ on $\LL^2(Y,\nu)$ is an arbitrary small positive function. To be more precise, if $f\in \LL^2(Y,\nu)$ is any function such that $1\geq f(y)>0$ for all $y\in Y$, define $Y_k=\{y\in Y: \frac 1{2^k}\leq f(y)<\frac 1{2^{k-1}}\}$, and note that the conditional measure of $Z=\bigsqcup_{k\in\N} Y_k\times (N_{0^{k+1}}\cup N_{1^{k+1}})$ is smaller than $f$.

Let $\mathcal A_n$ be an increasing family of finite subalgebras of $\MAlg(Y,\nu)$ with a dense reunion. Define $\mathcal B_n=\{A\cap (\bigcup_{k\leq n} Y_k): A\in\mathcal A_n\}$ and let $\mathcal B=\bigcup_{n\in\N} \mathcal B_n$. Note that because $\nu(\bigcup_{k\leq n} Y_k)\to 1$ as $n\to+\infty$, the countable set $\mathcal B$ is dense in $\MAlg(Y,\nu)$, and we have the following additionnal property: for every $B\in\mathcal B$, there exists $K\in\N$ such that $B\times (N_{0^K}\cup N_{1^K})\subseteq Z$.

First recall that for all $B\in\mathcal B$, there exists $K\in\N$ such that $B\times (N_{0^K}\cup N_{1^K})\subseteq Z$. Since the support of $\iota_B( U_n)$ is $B\times N_{0^{n-1}1}\cup N_{1^{n-1}0}$, which is a subset of $B\times (N_{0^K}\cup N_{1^K})$ for all $n\geq K-1$, we have the following lemma.
\begin{lem}\label{suppdisj}For every $B\in\mathcal B$ and $n$ large enough, the support of $\iota_B(U_n)$ is a subset of $Z$.
\end{lem}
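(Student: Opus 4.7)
The plan is to derive the statement directly from the two facts established just before the lemma: (a) the construction of $\mathcal B$ ensures that for each $B\in\mathcal B$ there is some $K\in\N$ with $B\times(N_{0^K}\cup N_{1^K})\subseteq Z$; (b) the support of $\iota_B(U_n)$ is exactly $B\times(N_{0^{n-1}1}\cup N_{1^{n-1}0})$, since $\iota_B(U_n)$ acts as $U_n$ on the fiber above $B$ and trivially elsewhere, and the support of $U_n$ is $N_{0^{n-1}1}\cup N_{1^{n-1}0}$.

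So the whole content reduces to an elementary set-inclusion statement about basic clopen sets in $2^\N$. Fix $B\in\mathcal B$, choose $K$ as in (a), and take any $n$ with $n-1\geq K$. Then every sequence in $N_{0^{n-1}1}$ begins with at least $K$ zeros, so $N_{0^{n-1}1}\subseteq N_{0^K}$; symmetrically $N_{1^{n-1}0}\subseteq N_{1^K}$. Multiplying by $B$ and invoking (a) gives
\[
\supp\iota_B(U_n)=B\times(N_{0^{n-1}1}\cup N_{1^{n-1}0})\subseteq B\times(N_{0^K}\cup N_{1^K})\subseteq Z,
\]
which is exactly the conclusion.

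There is no real obstacle here; the lemma is essentially a bookkeeping statement isolating a consequence of the definition of $\mathcal B$ that will be invoked repeatedly in the sequel (to ensure that all the generators $\iota_B(U_n)$ produced by the construction sit inside the prescribed small support set $Z$). The only thing to be careful about is the choice of the threshold in terms of $K$; any $n\geq K+1$ works, and the threshold depends on $B$ but not on anything else.
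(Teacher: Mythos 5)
Your argument is exactly the paper's: the support of $\iota_B(U_n)$ is $B\times(N_{0^{n-1}1}\cup N_{1^{n-1}0})$, which for $n-1\geq K$ lies in $B\times(N_{0^K}\cup N_{1^K})\subseteq Z$ by the construction of $\mathcal B$. Your threshold $n\geq K+1$ is the correct one (the paper's ``$n\geq K-1$'' is a harmless slip), so the proposal is correct and essentially identical to the paper's justification.
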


We now build an element $U$ of the full group of $\mathcal R$ such that the support of $U$ is a subset of $Z$ and $[\mathcal R_0^Y]$ is topologically generated by $T$ and $U$.

\begin{proof}[Sketch of the construction]Let us briefly explain the idea behind the construction of $U$. Observe that for all $B,B'\in\mathcal B$ and $n\neq m\in\N$, $\iota_B(U_n)$ and $\iota_{B'}(U_m)$ have disjoint support (hence they commute). We fix an enumeration $(B_k)_{k\in\N}$ of $\mathcal B$ such that each element of $\mathcal B$ appears infinitely many times. Now start with $U=\iota_{B_1}(U_{n_1})$, where $n_1$ is such that the support of $\iota_{B_1}(U_{n_1})$ is a subset of $Z$. Then by Lemma \ref{generiota}, the group generated by $T$ and $U$ contains $\iota_{B_1}(\Stild_{2^{n_1}})$. 

We now also want the group generated by $T$ and $U$ to contain $\iota_{B_2}(\Stild_{2^{n_2}})$ for some $n_2$ such that the support of $\iota_{B_1}(U_{n_2})$ is a subset of $Z$. If we put $U=\iota_{B_1}(U_{n_1})\iota_{B_2}(\sqrt{U_{n_2}})$, observe that since $U_{n_1}$ is an involution, $U^2=\iota_{B_2}(U_{n_2})$ so that the group generated by $T$ and $U$ contains $\iota_{B_2}(\Stild_{2^{n_2}})$. The problem is that now, the group generated by $T$ and $U$ does not contain $\iota_{B_1}(\Stild_{2^{n_1}})$ anymore. But if we choose $n_2$ big enough, the support of $\iota_{B_2}(\sqrt U_{n_2})$ will be very small, so we will still have $\iota_{B_1}(\Stild_{2^{n_1}})$ in the group generated by $T$ and $U$, up to a small error. If we keep doing that, the group generated by $T$ and $U$ will contain up to an error converging to $0$ every $\iota_{B_k}(\Stild_{2^{n_k}})$, so by Proposition \ref{permdense} it will actually contain every $\iota_B([\mathcal R_0])$. Then lemma \ref{iotadense} yields that the group topologically generated by $T$ and $U$ contains the full group of $\mathcal R_0^Y$. 
\end{proof}

We now begin the actual construction, reverse-engineering the sketch above. Thanks to Lemma \ref{iotadense}, it is enough to find $U\in[\mathcal R_0^Y]$ such that the support of $U$ is a subset of $Z$, and for every $B\in\mathcal B$, $\iota_B([\mathcal R_0])$ is contained in the closed group generated by $T$ and $U$.

Fix a sequence of positive reals $(\epsilon_k)_{k\in\N}$ decreasing to zero. Since $\iota_B$ is bi-Lipschitz, by Proposition \ref{iotadense} it suffices to find $U\in[\mathcal R_0^Y]$ whose support is a subset of $Z$ and an increasing sequence of integers $(n_k)_{k\in\N}$ such that
\begin{enumerate}[(a)]
\item\label{conda}For all $B\in\mathcal B$ and infinitely many $k\in\N$, every element of $\iota_B(\Stild_{2^{n_k}})$ is in the $\epsilon_{k}$-neighborhood of $\la T,U\ra$.\end{enumerate}

We now fix an enumeration $(B_k)_{k\in\N}$ of $\mathcal B$ such that each element of $\mathcal B$ appears infinitely many times. This allows us to replace condition (\ref{conda}) by condition

\begin{enumerate}[(a')]
\item\label{condaprime} For all $k\in\N$, every element of $\iota_{B_k}(\Stild_{2^{n_k}})$ is in the $\epsilon_{k}$-neighborhood of $\la T, U\ra$.\end{enumerate}

Finally, the definition of $\kappa(n)$ (cf. the end of section \ref{gentun}), the fact that $d_u$ is biinvariant and Lemma \ref{generiota} yield that we can replace (\ref{condaprime}') by the weaker condition

\begin{enumerate}[(a'')]
\item\label{condaseconde} For all $k\in\N$, the element $\iota_{B_k}(U_{n_k})$ is in the $\displaystyle\frac{\epsilon_{k}}{\kappa(n_k)}$-neighborhood of the group generated by $U$.\end{enumerate}

First choose $n_1\in\N$ such that the support of $\iota_{B_1}(U_{n_1})$ is a subset  of $Z$ (such an $n_1$ exists by Lemma \ref{suppdisj}). Then, given $n_k$, find $n_{k+1}>n_k$ such that
\begin{enumerate}[(i)]
\item the support of $\iota_{B_{k+1}}(U_{n_{k+1}})$ is a subset of $Z$ and
\item \label{formula}$\displaystyle\frac 1{2^{n_{k+1}-2}}<\frac{\epsilon_{k}}{\kappa(p_{k})}$.
\end{enumerate}
We are ready to define $U$ as an infinite product of commuting elements:

$$U=\prod_{l\in\N^*}\iota_{B_l}(\sqrt[2^{l-1}] {U_{n_l}}).$$

Let us check that condition (\ref{condaseconde}'') is satisfied. If we fix $k\in\N^*$, we have

\begin{align*}
U^{2^{k}}&=\prod_{l\in\N^*}\iota_{B_l}(\sqrt[2^{l-1}] {U_{n_l}}^{2^{k-1}})\\
&=\prod_{l=1}^{k-1} \iota_{B_l}(U_{n_l}^{2^{k-l}})\cdot \iota_{B_k}(U_{n_k})\cdot\prod_{l=k+1}^{+\infty}\iota_{B_l}(\sqrt[2^{l-k}]{U_{n_l}})
\end{align*}
Because the $U_{n_l}$'s are involution, the first product is equal to the identity, so that
\begin{equation}\label{product}
U^{2^{k-1}}=\iota_{B_k}(U_{n_k})\cdot\prod_{l=k+1}^{+\infty}\iota_{B_l}(\sqrt[2^{l-k}]{U_{n_l}}).
\end{equation}
We now check that the error term $W_k=\prod_{l=k+1}^{+\infty}\iota_{B_l}(\sqrt[2^{l-k}]{U_{n_l}})$ is small. Because for every $l\in\N$, $\sqrt[2^{l-k}]{U_{n_l}}$ has same support as $U_{n_l}$, the support of $W_k$ has measure smaller than 
\begin{equation}\label{sum}\nu(B_l)\sum_{l=k+1}^{+\infty}\lambda(\supp U_{n_l})\leq\sum_{l=k+1}^{+\infty}\frac 1{2^{n_l-1}}\end{equation}
Since $(n_l)_{l\in\N}$ is increasing, we have for all $l\geq k+1$, $$\frac 1{2^{n_l-1}}\leq \frac 1{2^{n_{k+1}+(l-k-2)}}$$
We can now bound from above the sum in (\ref{sum}) and get the inequality 
\begin{align*}\mu(\supp W_k)&\leq\frac1{2^{n_{k+1}}}\cdot \sum_{l={k+1}}^{+\infty} \frac 1{2^{l-k-2}}\\
&\leq\frac 4{2^{n_{k+1}}}\\
&\leq \frac{\epsilon_k}{\kappa(n_k)}
\end{align*}
because condition (\ref{formula}) states that $\displaystyle\frac 1{2^{n_{k+1}-2}}<\frac{\epsilon_{k}}{\kappa(p_{k})}$.  In the end, because $d_u$ is bi-invariant, formula (\ref{product}) yields
\begin{align*}
d_u(U^{2^{k-1}},\iota_{B_k}(U_{n_k}))&\leq \mu(\supp(W_k))\\
&\leq\frac{\epsilon_k}{\kappa(n_k)}.
\end{align*}
Condition (\ref{condaseconde}'') is thus satisfied, so that $T$ and $U$ topologically generate the full group of $\mathcal R_0^Y$. In other words, property (\ref{Ueng}) holds for $m=0$. Let now $m\geq 0$, and let $U'=U^{2^m}$. By the definition of the square root function, and the fact that the $U_n$'s are involutions, we get  
\begin{align*}U'&=\prod_{l=m+1}^{+\infty}\iota_{B_l}(\sqrt[2^{l-m-1}]{U_{n_l}})\\
&=\prod_{l\in\N^*}\iota_{B_{m+l}}(\sqrt[2^{l-1}]{U_{n_{l+m}}}).\end{align*}
So if we put $B'_l=B_{m+l}$, the $B'_l$'s still enumerate $\mathcal B$ in a way that every element of $\mathcal B$ appears infinitely many times, and we can apply the previous proof to see that $U'$ satisfies condition
\begin{enumerate}[(b'')]
\item For all $k\in\N^*$, the element $\iota_{B'_k}(U_{n_{k+m}})$ is in the $\displaystyle\frac{\epsilon_{k+m}}{\kappa(n_{k+m})}$-neighborhood of the group generated by $U'$.\end{enumerate}
Using the same argument as for condition (a'') on $U$, one can see that this implies that $T$ and $U'$ topologically generate the full group of $\mathcal R_0^Y$. So for all $m\geq 0$, we have that $T$ and $U^{2^m}$ topologically generate the full group of $\mathcal R_0^Y$, in other words condition (\ref{Ueng}) is satisified.

Let us now check that condition (\ref{CUeng}) is also satisfied for all $m\geq 0$, that is, let us show that if $C$ is an odd cycle (an element whose orbits are all finite and have odd cardinality) with disjoint support from the support of $U^{2^m}$, then $U^{2^m}$ is a cluster point of the sequence $\left((CU^{2^m})^k\right)_{k\in\N}$. For this, we just need the fact that $V=U^{2^m}$ only has orbits of cardinality $2^l$ for $l\in\N$.

Let the integers $(p_l)_{l\in\N}$ enumerate the cardinalities of the orbits of $C$, and let $P_k=\prod_{l\leq k} p_k$. We have $C^{P_k}\to \mathrm{id}_X$ when $k\to+\infty$. For $k\in\N$, let $X_k$ be the set of elements whose $V$-orbit has cardinality at most $2^k$. Then, because all the orbits of $V$ are finite, we have $\mu(X_k)\to 1$ as $k\to+\infty$. So, if we let 
$$V_k(x)=\left\{\begin{array}{cl}V(x) & \text{if }x\in X_k \\x &\text{else}\end{array}\right.,$$
we can write $V=V_kW_k$ , where $V_k$ and $W_k$ have disjoint support and $W_k\to \mathrm{id}_X$ as $k\to+\infty$.

Since $P_k$ is odd we find $m_k\in\N$ such that $m_kP_k\equiv 1 \mod 2^k$. Observe that then, $m_kP_k\equiv 1 \mod 2^l$ for all $l\leq k$. This implies that ${V}_k^{m_kP_k}=V_k$, so that 
\begin{align*}
d_u(V^{m_kP_k}, V_k)&=d_u({V}_k^{m_kP_k}{W}_k^{m_kP_k},{V}_k)\\
&=d_u({V}_k{W}_k^{m_kP_k},{V}_k)\\
&\leq\mu(\supp {W}_k)\to 0\;\; [k\to+\infty].\end{align*}
In the end, because $V$ and $C$ have disjoint support, 
\begin{align*}
(CV)^{m_kP_k}=(C^{P_k})^{m_k}V^{m_kP_k}\to V\;\; [k\to+\infty],
\end{align*}
so that condition (2) is satisfied and Theorem \ref{thmgenEO} is proved.

\section{Consequences}

\subsection{Computation of the topological rank for full groups}

We now begin the proof of Theorem \ref{thethm}. Let $\mathcal R$ be a pmp aperiodic equivalence relation. We first prove the inequality
$$t([\mathcal R])\geq\sup_{x\in X} \lfloor \CCost(\mathcal R)(x)\rfloor+1.$$
Let $A$ be a positive $\mathcal R$-invariant set and $n\in\N$ such that $\lfloor \CCost(\mathcal R)(x)\rfloor\geq n$ for all $x\in A$. Then the restriction to $A$ yields a continuous morphism $[\mathcal R]\to[\mathcal R_{\restriction A}]$. The normalised cost of $\mathcal R_{\restriction A}$ is at least $n$, so that by an argument of Miller (cf. \cite[proof of Thm. 1]{gentopergo}), $t([\mathcal R_{\restriction A}]\geq n+1$. Because the restriction is a continuous surjective morphism, we get $t([\mathcal R])\geq n+1$. By definition of the essential supremum, the first inequality is proved.

We now prove the reverse inequality. Let us first put ourselves in the situation provided by Proposition \ref{zimnonergo} and assume that $X=Y\times 2^\N$, $\mu=\nu\times\lambda$ where $(Y,\nu)$ is a standard probability space possibly with atoms, and that $\mathcal R$ contains the hyperfinite equivalence relation $\mathcal R_0^Y=\Delta_{Y}\times\mathcal R_0$. The $\mathcal R$-conditional expectation is then the projection onto $\LL^2(Y,\nu)$.

Let $n$ be the essential supremum of the function $y\mapsto\lfloor \CCost(\mathcal R)\rfloor(y)$. We must find $n+1$ topological generators for $[\mathcal R]$. By definition, the C-cost of $\mathcal R$ is everywhere less than $n+1$. 

Fix a graphing $\Phi_0$ which generates $\mathcal R_0^Y$, and whose C-cost is 1. By Lemma \ref{troispointcinq}, we can find a graphing $\Phi$ of $\mathcal R$ such that $\Phi_0\cup \Phi$ generates $\mathcal R$ and the C-Cost of $\Phi$ is everywhere lesser than $n$. Now define $f(y)=\frac{\CCost(\Phi)(y)}{n}<1$, and write $\Phi=\Phi_1\cup\cdots\cup\Phi_n$, where each $\Phi_i$ has C-cost $f$. We may find an odd valued function $q(y)$ such that for all $y\in Y$, 
$$\frac{q(y)+2}{q(y)}f(y)<1.$$

Theorem \ref{thmgenEO} then provides elements $T$ and $U$ of the full group of $\mathcal R_0^Y$ such that  the support of $U$ has $\mathcal R$-conditional measure less than $1-\frac{q(y)+2}{q(y)}f(y)$, and the following properties are satisfied:

\begin{enumerate}[(1)]\item The full group of $\mathcal R_0^Y$ is topologically generated by $T=\mathrm{id}_Y\times T_0$ and  $U$.
\item For every odd cycle $C$ with disjoint support from $U$, the closed group generated by $UC$ contains $U$.
\end{enumerate}  

We now let $Y_p=\{y\in Y: q(y)=p\}$ for $p\in\N$. Note that when $p$ is even, $Y_p$ is empty, and that the $Y_p$'s partition $Y$. Then by Lemma \ref{maha}, for every $p\in\N$ we may find disjoint subsets $A_p^1,...,A_p^{p+2}$ of  $(Y_p\times 2^\N)\setminus\supp U$, each of the same $\mathcal R$ conditional measure $\frac {f(y)}{p}$. 

Fix $p\in\N$ as well as $i\in\{1,...,n\}$, and consider the restriction of $\Phi_i$ to $Y_p\times 2^\N$. Using Proposition \ref{transitive}, we may assume, after cutting/gluing elements of $\Phi_i$ and pre/post-composing by partial isomorphisms of $\mathcal R_0^Y$, that the restriction of $\Phi_i$ to $Y_p\times 2^\N$ is a pre-$(p+1)$-cycle. 

Now for all $p\in\N$, fix $\psi_p\in[[\mathcal R_0^Y]]$ whose domain is $A_p^{p+1}$ and whose range is $A_p^{p+2}$. Put $\tilde\Phi_i=\Phi_i\cup\{\psi_p:p\in\N\}$. 

Then, $\mathcal R$ is still generated by $\Phi_0\cup\tilde\Phi_1\cup\cdots\cup\tilde\Phi_n$, and every $\tilde \Phi_i$ is a pre-$(p+2)$-cycle when restricted to $Y_p\times 2^\N$. Let $C_i^p$ be the corresponding $(p+2)$ cycles, and let $C_i$ be the odd cycle obtained by gluing together all the $C_i^p$. Finally, let $U_1=UC_1$.

\begin{claim}The $(n+1)$ elements $T,U_1,C_2,...,C_n$ topologically generate the full group of $\mathcal R$. 
\end{claim}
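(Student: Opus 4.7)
My strategy is to first extract $U$ from $U_1$ using property (\ref{CUeng}), then recover $[\mathcal R_0^Y]$ via property (\ref{Ueng}), and finally combine $[\mathcal R_0^Y]$ with the cycles $C_1, \ldots, C_n$ to generate $[\mathcal R]$ by invoking Theorem \ref{ktdense}.

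By construction, $C_1$ is a cycle with odd orbit sizes (each orbit on $Y_p\times 2^\N$ has cardinality $p+2$, and $q$ takes only odd values), and its support $\bigsqcup_p \bigcup_{j=1}^{p+2} A_p^j$ is disjoint from $\supp U$. Applying property (\ref{CUeng}) of Theorem \ref{thmgenEO} with $m=0$ and $C=C_1$ therefore gives $U\in\overline{\la U_1\ra}$. Since $U$ and $C_1$ commute (disjoint supports), we then obtain $C_1=U^{-1}U_1$ in the closed subgroup generated by $T,U_1,C_2,\ldots,C_n$. Property (\ref{Ueng}) with $m=0$ next gives $[\mathcal R_0^Y]=\overline{\la T,U\ra}$, so this closed subgroup already contains $[\mathcal R_0^Y]\cup\{C_1,\ldots,C_n\}$.

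For the final step, each partial isomorphism in $\tilde\Phi_i$ is a restriction of $C_i$, so $\mathcal R_{\tilde\Phi_i}=\mathcal R_{C_i}$, and hence
\[
\mathcal R \;=\; \mathcal R_0^Y\vee\mathcal R_{C_1}\vee\cdots\vee\mathcal R_{C_n}.
\]
By Theorem \ref{ktdense}, the subgroup $\la [\mathcal R_0^Y]\cup\bigcup_{i=1}^n [\mathcal R_{C_i}]\ra$ is dense in $[\mathcal R]$, so it suffices to verify that each $[\mathcal R_{C_i}]$ lies in the closed subgroup generated by $[\mathcal R_0^Y]$ and $C_i$.

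The heart of the argument is this last inclusion: recovering the full group of the finite equivalence relation $\mathcal R_{C_i}$ from the single generating cycle $C_i$ together with $[\mathcal R_0^Y]$. The essential tool is that $\mathcal R_0^Y$ shares its $M$-algebra with $\mathcal R$, so Proposition \ref{transitive} applied to $\mathcal R_0^Y$ yields ample partial isomorphisms in $[[\mathcal R_0^Y]]$ between any two sets of equal $\mathcal R$-conditional measure; in particular it supplies swaps between the pieces $A_p^j$ that form fundamental domains for the $\la C_i\ra$-action. Composing such swaps with suitable powers of $C_i$ should realize every element of $[\mathcal R_{C_i}]$ as a limit of words in $C_i$ and $[\mathcal R_0^Y]$, but this combinatorial approximation is where the technical work lies.
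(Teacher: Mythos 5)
Your first half follows the paper exactly: property (\ref{CUeng}) with $m=0$ applied to the odd cycle $C_1$ (whose support, being contained in $\bigsqcup_p\bigsqcup_j A_p^j$, is disjoint from $\supp U$) gives $U\in\overline{\la U_1\ra}$, property (\ref{Ueng}) then gives $[\mathcal R_0^Y]$, hence also $C_1=U\inv U_1$, and the reduction via $\mathcal R=\mathcal R_0^Y\vee\mathcal R_{C_1}\vee\cdots\vee\mathcal R_{C_n}$ and Theorem \ref{ktdense} to the inclusion $[\mathcal R_{C_i}]\subseteq\overline{\la [\mathcal R_0^Y]\cup\{C_i\}\ra}$ is the right one. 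But that inclusion is precisely the heart of the claim, and you do not prove it: your last paragraph says the swaps from Proposition \ref{transitive} composed with powers of $C_i$ ``should'' realize every element of $[\mathcal R_{C_i}]$ and that ``this combinatorial approximation is where the technical work lies.'' That is a genuine gap, not a detail. Some hypothesis linking $C_i$ to $\mathcal R_0^Y$ must actually be used: for a cycle $C$ and an auxiliary relation that interacts poorly with it, the closed group generated by the full group of the auxiliary relation together with $C$ need not contain $[\mathcal R_C]$ (in the degenerate extreme, $\overline{\la C\ra}$ alone is nowhere near $[\mathcal R_C]$), so an argument that only invokes the existence of $[[\mathcal R_0^Y]]$-swaps between sets of equal conditional measure cannot suffice as stated.

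The paper closes this step by appealing to the arguments of \cite[Prop.\ 9]{gentopergo}, and the reason those arguments apply is exactly the feature your sketch never uses: the legs $\psi_p\in[[\mathcal R_0^Y]]$ were deliberately inserted into $\tilde\Phi_i$ so that every $C_i$-orbit contains two consecutive points which are already $\mathcal R_0^Y$-equivalent. It is by cutting the cycle at these $\psi_p$-edges — using elements of $[\mathcal R_0^Y]$ supported in $A_p^{p+1}\cup A_p^{p+2}$, conjugated around the cycle by powers of $C_i$ — that one approximates the restrictions of the legs of $\tilde\Phi_i$ to small sets and hence obtains $[\mathcal R_{\tilde\Phi_i}]=[\mathcal R_{C_i}]$ in the closure (the same mechanism reappears in the proof of Proposition \ref{littlegdelta}, where it is required that ``each $C'$ orbit contains two consecutive $\mathcal R_0^Y$ related elements''). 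To repair your proof you would need either to carry out this approximation explicitly or to quote the gentopergo proposition and verify its hypothesis via the $\psi_p$; generic swaps between the $A_p^j$ alone do not obviously produce it.
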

\begin{proof}[Proof of the claim]
Let $G$ be the closed group generated by $T,U_1,C_2,...,C_n$.
First, thanks to condition (2) and the fact that $U_1=UC_1$ belongs to $G$, we have that $U$ belongs to $G$. So by condition (1), $G$ contains $[\mathcal R_0^Y]$. It follows that $G$ contains $U\inv U_1=C_1$. 
Since every $\mathcal R_{\tilde \Phi_i}$ is generated by its restrictions to every $Y_p\times 2^\N$, which contain $\psi_p\in[[\mathcal R_0^Y]]$, the arguments in \cite[Prop. 9]{gentopergo} and the fact that $\mathcal R$ is the join of $\mathcal R_0^Y$, $\mathcal R_{\tilde\Phi_1}$,...,$\mathcal R_{\tilde\Phi_n}$ yield the conclusion.
\end{proof}

\subsection{A dense \texorpdfstring{$G_\delta$}{G delta} set of topological generators}

Let $APER$ denote the set of elements of $\Aut(X,\mu)$ whose orbits are all infinite. In this section we show the following proposition, which will be improved at the end of the next section (cf. Theorem \ref{Gdelta}).

\begin{prop}\label{littlegdelta}
Let $\mathcal R$ be any aperiodic pmp equivalence relation with $\Cost(\mathcal R)=1$. Then the set $$\{(T,U)\in (\mathrm{APER}\cap [\mathcal R])\times[\mathcal R]: \overline{\la T,U\ra}=[\mathcal R]\}$$
is a dense $G_\delta$ in $(\mathrm{APER}\cap[\RR])\times[\mathcal R]$.
\end{prop}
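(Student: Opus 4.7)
The $G_\delta$-ness is standard: if $(g_n)_{n\in\N}$ enumerates a countable dense subset of $[\mathcal R]$ and $(w_k)_{k\in\N}$ enumerates group words in two formal variables, then the condition $\overline{\la T,U\ra}=[\mathcal R]$ unfolds as
\[
\bigcap_{n,p\in\N}\bigcup_{k\in\N}\{(T,U):d_u(w_k(T,U),g_n)<1/p\},
\]
with each inner set open by continuity of multiplication and inversion for $d_u$.

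For density, fix $(T_0,U_0)\in(\mathrm{APER}\cap[\mathcal R])\times[\mathcal R]$ and $\epsilon>0$; the goal is to produce $(T,U)$ within $\epsilon$ with $\overline{\la T,U\ra}=[\mathcal R]$. First, I would perturb $T_0$ to an aperiodic $T\in[\mathcal R]$ at $d_u$-distance less than $\epsilon$ such that $\mathcal R_T$ is hyperfinite with the same algebra of invariant sets as $\mathcal R$ and, under the identification of Proposition \ref{zimnonergo}, corresponds to the fiberwise odometer $\mathrm{id}_{Y_{\mathcal R}}\times T_0$. Density of such $T$'s in $\mathrm{APER}\cap[\mathcal R]$ follows from a standard Rokhlin-type argument coupled with Proposition \ref{zimnonergo}. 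Theorem \ref{thmgenEO} then furnishes $V\in[\mathcal R_T]$ with $\mu(\supp V)<\epsilon/4$ such that $\overline{\la T,V\ra}=[\mathcal R_T]$ and property (II) (odd-cycle absorption) holds.

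Because $\Cost(\mathcal R)=1$ forces $\CCost(\mathcal R)\equiv 1$ almost everywhere, Lemma \ref{troispointcinq} (applied with $\Psi_0=\{T\}$, of C-cost~$1$) yields a graphing $\Phi$ of C-cost less than $\epsilon/8$ such that $\{T\}\cup\Phi$ generates $\mathcal R$. Using Propositions \ref{maha} and \ref{transitive}, I would repackage $\Phi$ as a single odd $3$-cycle $C\in[\mathcal R]$ with $\supp C$ disjoint from $\supp V$ and $\mu(\supp C)<\epsilon/4$, such that $\mathcal R_T\vee\mathcal R_C=\mathcal R$. Setting $W=CV$, property (II) of Theorem \ref{thmgenEO} gives $V\in\overline{\la W\ra}$, so $[\mathcal R_T]\subseteq\overline{\la T,W\ra}$; since also $C=WV^{-1}$ lies in this closed group, Theorem \ref{ktdense} applied to $\mathcal R_T\vee\mathcal R_C=\mathcal R$ yields $\overline{\la T,W\ra}=[\mathcal R]$.

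It remains to absorb $W$ into a small perturbation of $U_0$. I would modify $U_0$ on a set of measure at most $\mu(\supp W)+\mu(U_0^{-1}(\supp W))<\epsilon$ to obtain $U\in[\mathcal R]$ with $U|_{\supp W}=W|_{\supp W}$, extending $U$ elsewhere by a standard shuffling argument to ensure it is a bijection. The delicate point is then verifying $\overline{\la T,U\ra}=[\mathcal R]$: by arranging at the outset for $\supp W$ to lie inside a thin Rokhlin tower level for $T$, the conjugates $T^n U T^{-n}$ have supports shifted along the tower, and suitable commutators and products isolate the $W$-contribution from the $U_0$-contribution, recovering $W$ (and hence $U_0$) inside $\overline{\la T,U\ra}$. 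This last isolation step, which hinges on the smallness and careful placement of $\supp W$ afforded by Theorem \ref{thmgenEO} and the cost-one assumption, is the main obstacle of the proof.
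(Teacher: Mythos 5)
Your overall architecture (reduce to a good hyperfinite $T$ via Proposition \ref{zimnonergo} and a Rokhlin-type density argument, invoke Theorem \ref{thmgenEO} for a small-support $V$, use cost one and Lemma \ref{troispointcinq} to get a small odd cycle $C$ with $\mathcal R_T\vee\mathcal R_C=\mathcal R$, and conclude via property (II) and Theorem \ref{ktdense}) matches the paper up to the point where you have produced a \emph{single} pair $(T,W)$ with $W=CV$ of small support topologically generating $[\mathcal R]$. But the last step — absorbing $W$ into an arbitrary $U_0$ — is a genuine gap, and you acknowledge it yourself. Defining $U$ to agree with $W$ on $\supp W$ and with (a shuffled) $U_0$ elsewhere gives no mechanism for recovering $W$, or anything supported in $\supp W$, inside $\overline{\la T,U\ra}$: the element $U_0$ is completely arbitrary (possibly of full support, infinite order, aperiodic), so the conjugates $T^nUT^{-n}$ and their commutators mix the unknown $U_0$-part with the $W$-part in ways you cannot control; nothing forces the supports of the $U_0$-contributions to stay localized in tower levels, so the proposed ``isolation'' has no justification and there is no apparent way to supply one at this level of generality.

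The paper sidesteps exactly this difficulty by never trying to graft the generator onto an arbitrary $U_0$. Instead it first approximates $U_0$ by a \emph{cycle} $C$ of bounded order (cycles, with support avoiding a fixed dyadic cylinder, are dense in $[\mathcal R]$), writes its order as $2^KN$ with $N$ odd, and then perturbs $C$ rather than $U_0$: it sets $U'=UCC'$, where $U$ comes from Theorem \ref{thmgenEO} (with the exponent $2^K$ built in: $T$ and $U^{2^K}$ generate $[\mathcal R_0^Y]$) and $C'$ is a small odd cycle of order $M$ coprime to $N$ generating $\mathcal R$ together with $\mathcal R_0^Y$. Because $C$, $C'$, $U$ have pairwise disjoint supports, everything commutes, and the isolation is done by pure finite-order arithmetic: property (II) applied to the odd cycle $CC'$ recovers $U^{2^K}$ from $(U')^{2^K}$, hence $[\mathcal R_0^Y]\subseteq\overline{\la T,U'\ra}$, then $CC'=U'U^{-1}$ lies in the group and the Chinese remainder theorem (coprimality of the orders of $C$ and $C'$) extracts $C'$, after which Theorem \ref{ktdense} finishes. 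It is precisely the finite, explicitly factored order of the approximant $C$ — unavailable for your arbitrary $U_0$ — that replaces the isolation step your proposal leaves open. To repair your proof you would have to rework the last paragraph along these lines, i.e.\ approximate $U_0$ by a cycle first and choose the orders of the auxiliary cycles coprime to it, which is the paper's argument.
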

\begin{proof}
It is a standard fact that the set we are interested in is a $G_\delta$, so we only have to show its density. Because of Rohlin's lemma, any aperiodic element of $[\RR]$ has a dense conjugacy class in $[\RR]$, so that we actually only have to find one aperiodic $T\in[\RR]$ such that the set  $$\{U\in[\RR]: \overline{\la T,U\ra}=[\RR]\}$$
is dense in $[\mathcal R]$. 

By Proposition \ref{zimnonergo}, we can suppose $X=Y\times 2^\N$  equipped with a product measure $\mu=\nu\times\lambda$, where $(Y,\nu)$ is a standard probability space (possibly with atoms) and $\lambda$ is the standard $1/2$ Bernoulli product measure. Moreover, we may assume that $\mathcal R$ contains $\mathcal R_0^Y=\mathrm{id}_{Y}\times\mathcal R_0$, which is generated by $T=\mathrm{id}_Y\times T_0$. We will show that the set $\{U\in[\RR]: \overline{\la T,U\ra}=[\RR]\}$
is dense. 

Recall that the set of cycles is dense in the full group of $\mathcal R$. One can then show that the set of cycles whose support does not intersect $Y\times (N_{0^{k+1}}\cup N_{1^{k+1}})$ for some $k$, and whose orbits have bounded cardinality is dense in $[\mathcal R]$.

Fix such a cycle $C$ and $\epsilon>0$, write the order of $C$ as a product $2^KN$, where $N$ is odd. Also fix $k\in\N$ such that $Y\times (N_{0^{k+1}}\cup N_{1^{k+1}})$ is disjoint from the support of $C$, and $\mu(Y\times (N_{0^{k+1}}\cup N_{1^{k+1}}))<\epsilon$, and find an odd number $M$ coprime with $N$.

Theorem \ref{thmgenEO} provides us $U\in[\mathcal R_0^Y]$ whose support is a subset of $Z=Y\times (N_{0^{k+2}}\cup N_{1^{k+2}})$, such that the full group of $\mathcal R_0^Y$ is topologically generated by $T$ and $U^{2^K}$.

Now note that because $\mathcal R$ is aperiodic, its conditional cost is greater or equal than one. Since $\mathcal R$ has cost one, its conditional cost must then be constant equal to one.
Using the same ideas as in the proof of Theorem \ref{thethm}, we may find a cycle $C'$ of order $M$ supported on $Y\times (N_{0^{k+1}1})$ (which is disjoint from the supports of $U$ and $C$), such that each $C'$ orbit contains two consecutive $\mathcal R_0^Y$ related elements, and $\mathcal R$ is generated by $T_0$ and $C'$. We define $U'=UCC'$, note that $U'$ is $\epsilon$-close to $C$ so that the proof boils down to the following claim.

\begin{claim}$T$ and $U'$ topologically generate $[\mathcal R]$.
\end{claim}

Indeed, let $G$ the the closed subgroup generated by $T$ and $U'$. Since $CC'$ and $U$ have disjoint support and $CC'$ is an odd cycle, condition (\ref{CUeng}) in Theorem \ref{thmgenEO} ensures that $U^{2^K}$ belongs to the group topologically generated by $U'^{2^K}=(CC'U)^{2^K}$, so $G$ contains $[\mathcal R_0^Y]$. Then $G$ must contain $CC'=U'U\inv$, but by the chinese remainder theorem, because $C$ and $C'$ commute and their orders are coprime with each other, $G$ contains $C'$. Now because each $C'$ orbit contains two consecutive $\mathcal R_0^Y$ related elements, the same proof as for Theorem \ref{thmgenEO} yields that $G$ contains $[\mathcal R_{C'}]$, hence $G$ contains $[\mathcal R]$. 
\end{proof}

\section{Free subgroups in full groups}

This section is devoted to a generalization of the following theorem of Kechris, which we then use to find dense free subgroups in some full groups. Note that the set of tuples generating a free group is always a $G_\delta$ in a Polish group, so that in the next theorems, the key feature is the density of tuples generating a dense subgroup.

\begin{thm}[{\cite[Thm. 3.9]{MR2583950}}]Let $\mathcal R$ be a pmp ergodic equivalence relation and $n\in\N$. Then the set of $n$-tuples of aperiodic elements of $[\mathcal R]$ generating a free subgroup is a dense $G_\delta$ in the set of $n$-tuples of aperiodic elements of $[\mathcal R]$.
\end{thm}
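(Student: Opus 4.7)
The plan is a standard Baire category argument in the Polish space $(\mathrm{APER} \cap [\mathcal R])^n$. The set of free-generating $n$-tuples is easily $G_\delta$, and the work lies in proving density.

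\textbf{The $G_\delta$ structure.} For each non-trivial reduced word $w \in \mathbb F_n$, the set
\begin{equation*}
A_w = \{(U_1,\dots,U_n)\in (\mathrm{APER} \cap [\mathcal R])^n : w(U_1,\dots,U_n) \neq \mathrm{id}\}
\end{equation*}
is open, because the word map $[\mathcal R]^n \to [\mathcal R]$ is continuous for $d_u$ (each group operation is $1$-Lipschitz in each variable) and $\{\mathrm{id}\}$ is closed in the Hausdorff group $[\mathcal R]$. Since $\mathrm{APER}$ is $G_\delta$ in $[\mathcal R]$, the ambient space $(\mathrm{APER}\cap[\mathcal R])^n$ is Polish and $\bigcap_w A_w$ is $G_\delta$. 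By Baire it then suffices to show each $A_w$ is dense.

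\textbf{Density.} Fix a non-trivial reduced word $w$. If $w$ uses only one variable $x_j$, say $w=x_j^k$ with $k\neq 0$, then $w(U)\neq \mathrm{id}$ whenever $U_j$ is aperiodic (aperiodic automorphisms have infinite order), so $A_w$ is the whole space. Now assume $w$ involves at least two variables. Given $(U_1,\dots,U_n) \in \mathrm{APER}^n$ and $\epsilon > 0$, I would pick a variable $x_j$ appearing in $w$ and perturb $U_j$ to $V_j = U_j \sigma$, where $\sigma \in [\mathcal R]$ is supported on a small measurable set $E$ with $\mu(E) < \epsilon$. Using the marker lemma (Lemma \ref{marker}) inside $[\mathcal R]$, I would construct $E$ so that the finitely many translates of $E$ by partial subword evaluations $w'(U_1,\dots,U_n)$ of $w$ are pairwise essentially disjoint. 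On such an $E$, the expansion of $w(U_1,\dots,V_j,\dots,U_n)$ restricted to $E$ becomes an alternating product of $\sigma^{\pm 1}$'s acting on disjoint sets, so for a generic aperiodic $\sigma$ supported on $E$ the product is non-identity on a positive-measure subset. Since $A_w$ is open and $\mathrm{APER}$ is a dense $G_\delta$ in $[\mathcal R]$, a final small adjustment of $V_j$ preserves both $V_j \in \mathrm{APER}$ and $w(V_1,\dots,V_n) \neq \mathrm{id}$.

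\textbf{Main obstacle.} The delicate point is the ping-pong / non-cancellation step: arranging the disjoint translates of $E$ under the subword prefixes of $w$, and then exhibiting $\sigma$ for which the resulting product of $\sigma^{\pm 1}$'s does not collapse to the identity. The first half uses ergodicity to provide enough room in $(X,\mu)$ to shrink $E$ and push its finitely many translates apart; the second is a mini Baire argument inside $[\mathcal R_{\restriction E}]$, showing that the set of bad $\sigma$ (those producing identity) is meager because otherwise a fixed non-trivial word would be identically trivial on an open subset of this full group, contradicting the algebraic richness of $[\mathcal R_{\restriction E}]$ (which, as the full group of an aperiodic pmp equivalence relation, is far from satisfying any non-trivial identity).
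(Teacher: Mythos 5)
Your global frame (write the free tuples as the intersection over nontrivial reduced words $w$ of the open sets $A_w$, reduce to density of each $A_w$, dispose of one-variable words by aperiodicity) is exactly the frame of the paper's proof of Theorem \ref{freegp}, but the density step --- which is the whole content --- has genuine gaps. (i) You cannot in general choose $E$ so that its translates under the prefix evaluations $w'(U_1,\dots,U_n)$ are pairwise disjoint: the given tuple may satisfy relations (this is precisely the situation the proof must handle), so a prefix may act as the identity, or with a large set of fixed points, and then $w'(U_1,\dots,U_n)E$ meets $E$ no matter how $E$ is chosen; the marker lemma (Lemma \ref{marker}) produces small complete sections, not sets moved off themselves by finitely many prescribed, possibly trivial, transformations. (ii) Even granting the disjointness, the key claim that a ``generic'' $\sigma$ supported on $E$ makes the product nontrivial is not proved: after substituting $V_j=U_j\sigma$ the expression is a word in $\sigma$ \emph{with constants} $U_1,\dots,U_n$, not a group word, so the fact that $[\mathcal R_{\restriction E}]$ satisfies no group law is not the relevant statement, and the asserted meagerness of the bad set of $\sigma$ is exactly what needs to be established --- as written the argument is circular. (iii) The final step invokes density of $\mathrm{APER}$ in $[\mathcal R]$; this is false for the uniform topology (the paper recalls that $\mathrm{APER}$ is $d_u$-closed, and indeed every aperiodic element is at uniform distance $1$ from the identity), so there is no ``final small adjustment'' restoring aperiodicity: the perturbed element must be aperiodic by construction.

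For comparison, the paper's proof of Theorem \ref{freegp} avoids all three problems at once. It uses a single element $T$ with unbounded orbits (here one takes $T$ to be one of the given aperiodic coordinates appearing in $w$) to produce a set $A$ whose translates $T^{j}(A)$, $0\leq j\leq 2(n+1)^2$, are disjoint, arranges them into a grid $A_{i,j}=T^{j+n+2(n+1)i}(A)$, and builds a partial isomorphism $\psi\in[[\mathcal R]]$ that ``follows the path'' of the reduced word through this grid; disjointness is then automatic from the choice of $A$ and does not depend on any properties of the unperturbed tuple, and by construction $w(T,\psi)$ has range disjoint from its domain, so \emph{every} extension $U'$ of $\psi$ satisfies $w(T,U')\neq\mathrm{id}$ --- no genericity-of-$\sigma$ argument is needed. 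Finally, aperiodicity is secured directly: $\psi$ is a union of pre-cycles with disjoint supports, so it extends to an aperiodic element of the full group of the restriction of $\mathcal R$ to $\bigsqcup_j T^j(A)$, and outside this set one uses the induced (first-return) map of $U$, which is again aperiodic; the resulting $U'$ lies in $\mathrm{APER}\cap[\mathcal R]$ and is $\delta$-close to $U$. To make your proposal correct you would have to replace your steps (i)--(iii) by something of this kind; as it stands the core non-cancellation and aperiodicity steps are not proved.
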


By the Kuratowski-Ulam theorem, for $n=2$ the above statement is equivalent to having a dense $G_\delta$ of aperiodic elements $T$ for which there is a dense $G_\delta$ of aperiodic elements $U$ such that the subgroup generated by $T$ and $U$ is free. Our result yields a dense $G_\delta$ above \textit{any} $T\in[\mathcal R]$ of infinite order. 

\begin{thm}\label{freegp}Let $\mathcal R$ be a pmp equivalence relation, and let $T\in[\mathcal R]$ have infinite order. Then for all $n\in\N$ we have the following:\begin{enumerate}[(1)]
\item The set of $n$-uples $(U_1,...,U_n)$ of elements of $[\mathcal R]$ such that
$$\la T,U_1,...,U_n\ra \simeq \mathbb F_{n+1}$$
is a dense $G_\delta$ in $[\mathcal R]^n$.
\item If furthermore $\mathcal R$ is aperiodic, then the set of $n$-uples of aperiodic elements $(U_1,...,U_n)$  of $[\mathcal R]$ such that 
$$\la T,U_1,...,U_n\ra \simeq \mathbb F_{n+1}$$
is a dense $G_\delta$ in $([\mathcal R]\cap \mathrm{APER})^{n}$. 

\end{enumerate}
\end{thm}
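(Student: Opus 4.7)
My plan is to follow the standard Baire-category approach: since the intersection of countably many open sets is $G_\delta$, it suffices to enumerate the nontrivial reduced words $w \in F(t,x_1,\ldots,x_n)$ and show that for each such $w$, the set
\[ A_w = \{(U_1,\ldots,U_n) \in [\mathcal R]^n : w(T,U_1,\ldots,U_n) \neq \mathrm{id}\} \]
is open and dense in $[\mathcal R]^n$ (for part (1)), respectively dense in $(\mathrm{APER}\cap[\mathcal R])^n$ (for part (2)). Openness is immediate from continuity of the word map and closedness of $\{\mathrm{id}\}$ in $[\mathcal R]$. Since $\mathrm{APER}$ is itself $G_\delta$ in $\Aut(X,\mu)$, restricting to $(\mathrm{APER}\cap[\mathcal R])^n$ in part (2) preserves the $G_\delta$ nature of the overall intersection. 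So the real work is in density.

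If $w$ involves only the letter $t$, then $w=t^k$ with $k\neq 0$ and $T^k\neq\mathrm{id}$ by the hypothesis that $T$ has infinite order, so $A_w=[\mathcal R]^n$ is trivially dense. Otherwise, $w$ contains some $x_i$, and we may assume $i=1$. I would prove density of $A_w$ by a perturbation argument: given $(U_1^0,\ldots,U_n^0)$ and $\epsilon>0$, construct $V\in[\mathcal R]$ with $d_u(V,U_1^0)<\epsilon$ (and $V\in\mathrm{APER}$ in part (2)) such that $w(T,V,U_2^0,\ldots,U_n^0)\neq\mathrm{id}$. The perturbation is of the form $V=U_1^0\sigma$ with $\mu(\supp\sigma)<\epsilon$, which automatically gives the distance bound. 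Expanding the word with this substitution inserts factors of $\sigma^{\pm 1}$ at positions corresponding to the occurrences of $x_1^{\pm 1}$ in $w$.

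The combinatorial heart is then to pick $\sigma$ so the expanded word is nontrivial. For a point $p$, trace its trajectory $p=p_0,p_1,\ldots,p_L$ under the successive letters of the expanded word evaluated with $\sigma=\mathrm{id}$; this is just the original trajectory under $w(T,U_1^0,\ldots,U_n^0)$. For a positive-measure set of ``generic'' $p$ these points are pairwise distinct (the ``coincidence'' set has small measure, using that $T$ has infinite order and that $[\mathcal R]$ acts without large sets of fixed points along the relevant orbits). I then take $\sigma$ to be supported on a small positive-measure piece $A$ of such generic trajectory points together with an equal-measure set $B$ in the same $\mathcal R$-classes but disjoint from every considered trajectory, swapping $A$ and $B$ by a partial isomorphism in $[[\mathcal R]]$ extended to an involution. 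Such a $B$ exists: in part (2) because $\mathcal R$-classes are infinite, in part (1) because the infinite order of $T$ forces the existence of a $T$-invariant set of positive measure on which $T$-orbits (hence $\mathcal R$-classes) are large enough. The perturbed word then maps points of $A$ off the original trajectory, witnessing $w(T,V,U_2^0,\ldots,U_n^0)\neq\mathrm{id}$. Aperiodicity of $V$ in part (2) is free: $\mathrm{APER}\cap[\mathcal R]$ is a dense $G_\delta$ in $[\mathcal R]$, so one may further perturb inside a generic subset of the allowed $\sigma$'s to land in $\mathrm{APER}$.

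The main obstacle I foresee is the bookkeeping to guarantee that the multiple occurrences of $\sigma^{\pm 1}$ in the expanded word do not conspire to cancel. The resolution is to arrange $\supp\sigma$ disjoint from the original trajectory except at the single intended perturbation point $p_i$, so that all but one occurrence of $\sigma^{\pm 1}$ act as the identity along the tracked trajectory; the single ``active'' application then sends the trajectory into $B$, which is chosen disjoint from the remainder of the original trajectory so that the expanded word returns to a point off $\{p_0\}$. This is essentially the mechanism of Kechris' original proof of \cite[Thm. 3.9]{MR2583950}, but adapted to allow $T$ to merely be of infinite order rather than aperiodic, which is precisely what the infinite-order hypothesis affords through the unbounded-orbit condition above.
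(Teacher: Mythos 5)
The soft parts of your plan are fine: the reduction to showing that, for each nontrivial reduced word $w$, the set $A_w$ is open and dense; the observation that words in $t$ alone are disposed of by the infinite order of $T$; and the remark that part (2) can be obtained by a further small perturbation once one knows $\mathrm{APER}\cap[\mathcal R]$ is dense in $[\mathcal R]$ for aperiodic $\mathcal R$ (the paper instead builds the aperiodic extension directly, but your route is legitimate). The gap is at the heart of the density argument. You place the support of the perturbation $\sigma$ inside a positive-measure set of ``generic'' points whose trajectory $p_0,\dots,p_L$ under the \emph{original} tuple is injective, claiming the coincidence set is small. That claim is false, and it fails precisely in the cases the density argument must treat: whenever the given tuple satisfies the relation $w$ (e.g.\ all $U_i^0=\mathrm{id}$ in part (1), or $U_1^0=U_2^0$ aperiodic with $w=x_1x_2\inv$ in part (2)), one has $p_L=p_0$ for almost every $p$, and intermediate coincidences can likewise occur on full-measure sets; elements of $[\mathcal R]$ may have arbitrarily large fixed-point sets, so the appeal to ``no large sets of fixed points along the relevant orbits'' has no basis. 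Hence there may be no trajectory points at all on which to place $A$, and your prescription for $B$ does not yield the conclusion: after the single active application of $\sigma$ the trajectory leaves the original one entirely, and its endpoint is the suffix of $w$ applied to a point of $U_1^0(B)$, which your argument does not control. Ruling out that this diverted trajectory re-enters $\supp\sigma$ or returns to $p_0$ requires conditions on $B$ relative to finitely many measure-preserving images of it that are nowhere addressed.

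The missing idea is to \emph{create} the injectivity of the trajectory rather than extract it from the given tuple. This is what the paper does: since $T$ has infinite order its orbits are unbounded, so there is a set $A$ with $A,T(A),\dots,T^{2(n+1)^2}(A)$ pairwise disjoint and of small total measure; one then forgets $U_1^0$ on this tower (keeping only its first-return map off the tower, which stays $\delta$-close) and redefines it on the tower so that, reading the reduced word letter by letter, the image of $A_{0,0}$ moves through the grid of tower levels $A_{i,j}=T^{j+n+2(n+1)i}(A)$ along a lattice path ($t^{\pm1}$ vertical, $u^{\pm1}$ horizontal) which is injective \emph{because $w$ is reduced}; thus $w(T,U')$ sends $A_{0,0}$ to a disjoint level and is nontrivial. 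Note also that the connecting partial isomorphisms are taken to be restrictions of powers of $T$, so everything stays in $[[\mathcal R]]$ even when $\mathcal R$ has finite classes, which matters for part (1) since there you cannot count on room inside the $\mathcal R$-classes to choose your set $B$. If you wish to keep a ``small perturbation at one occurrence'' scheme, you will need a tower construction of this kind anyway to control where the perturbed trajectory goes; as written, the argument does not go through.
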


\begin{proof}
For the sake of notational simplicity, we only give a detailed proof for $n=1$. Whenever $w$ is a reduced word in two letters $t$ and $u$, we denote by $w(\varphi,\psi)$ the evaluation of this word on elements $\varphi,\psi$ belonging to the pseudo-full group of $\mathcal R$. Note that $w(\varphi,\psi)$ may be nowhere defined, but that whenever $\varphi$ and $\psi$ belong to the full group of $[\mathcal R]$, $w(\varphi,\psi)$ also does. 

Let $T\in\mathcal R$ have infinite order, which is equivalent to asking that $T$ has unbounded orbits. We have to show that for every non-empty reduced word $w$ in $t$ and $u$, the set of $U\in[\mathcal R]$ (respectively in $U\in\mathrm{APER}\cap[\RR]$) such that $w(T,U)\neq 1$ is dense in $[\mathcal R]$ (respectively in $\mathrm{APER}\cap[\RR]$). Indeed, every one of these sets is open, and their intersection is the set of $U$'s such that $T$ and $U$ generate a free group.

So we fix a reduced word $w$, $\delta>0$ and $U\in[\RR]$ (respectively $U\in\mathrm{APER}\cap[\mathcal R]$). We want to find $U'\in [\RR]$ (respectively $U'\in\mathrm{APER}\cap[\mathcal R]$) such that $d_u(U,U')<\delta$ and $w(T,U')\neq\mathrm{id}_X$. Because $T$ has infinite order, we may restrict ourselves to the case when $w$ contains at least one occurrence of $u$. Up to conjugating $w$, we may then suppose that $w$ ends with the letter $u$ or $u\inv$. Let $n$ be the length of $w$.

Because $T$ has unbounded orbits, there is $A\subseteq X$ such that $(T^j(A))_{j=0}^{2(n+1)^2}$ is a disjoint family of subsets of $X$ . The map which associates to every $B\in\MAlg(X,\mu)$ the first return map $U_B$ induced by $U$ on $B$ is continuous. So, up to shrinking $A$, we may furthermore assume that 
\begin{equation*}
d_u(U,U_{X\setminus {\bigsqcup_{j=0}^{2(n+1)^2}T^j(A)}})<\frac \delta 2,
\end{equation*}
and also that $\mu(\bigsqcup_{j=0}^{2(n+1)^2}T^j(A))<\frac \delta 2$. 

For all $i\in\{0,...,n\}$ and $j\in\{-n,...,n\}$, let $A_{i,j}=T^{j+n+2(n+1)i}(A)$. We think of the $A_{i,j}$'s as squares on the plane whose center has coordinates $(i,j)$. In this picture, $T$ mostly acts by vertical translation.

Write $w=w_n\cdots w_1$, with $w_k\in\{t,t\inv,u,u\inv\}$. Let us define by induction on $k\in\{0,...,n\}$ a sequence of pairs of integers $(i_k,j_k)_{k=0}^n$ by putting $(i_0,j_0)=(0,0)$, and then
$$(i_{k+1},j_{k+1})=(i_k,j_k)+\left\{\begin{array}{ll}(0,1) & \text{ if }w_k=t \\(0,-1) & \text{ if }w_k=t\inv \\(1,0) & \text{ if }w_k=u^{±1}\end{array}\right..$$
Note that the sequence $(i_k,j_k)_{k=0}^n$ is injective because $w$ is a reduced word. We now build up an element $\psi$ of the pseudo full group of $\mathcal R$ such that $w(T,\psi)$ ‘‘follows the same path'' as the sequence $(i_k,j_k)$ (cf. Figure \ref{unfold}). 
\begin{figure}\begin{center}
\includegraphics[scale=0.9]{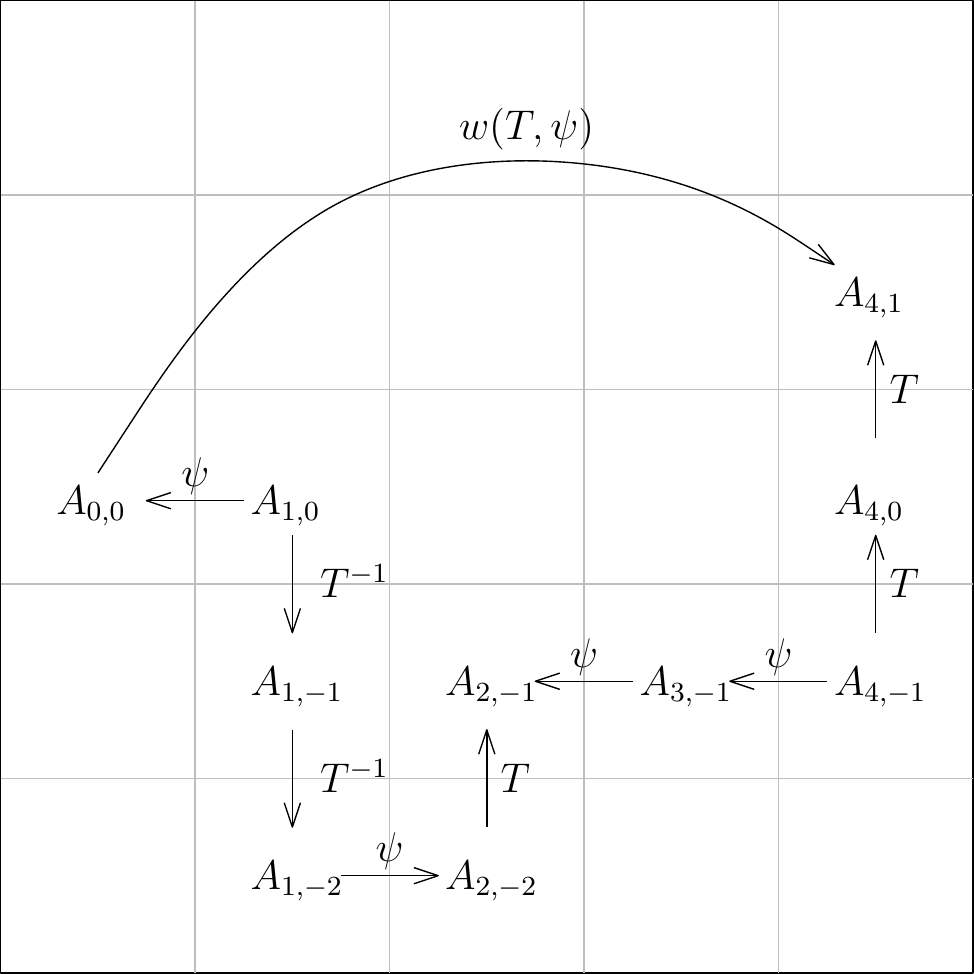}
\end{center}
\caption{\label{unfold}The construction of $\psi$ for $w=t^2u^{-2}tut^{-2}u^{-1}$}
\end{figure}

Let $K^+$ be the set of $k\in\{1,...,n\}$ such that $w_k=t$, and $K^-$ the set of $k\in\{1,...,n\}$ such that $w_k=t\inv$. For all $i,j\in\{1,...,n\}\times \{-n,...,n\}$, we fix a partial isomorphism $\psi_{i,j}\in[[\mathcal R]]$ whose domain is $A_{i-1,j}$ and whose image is $A_{i,j}$ (for instance, one can choose $\psi_{i,j}=T^{2n+1}_{\restriction A_{i-1,j}}$).

The partial isomorphism $\psi\in[[\mathcal R]]$ is defined by gluing together some of the $\psi_{i,j}$:
$$\psi=\bigsqcup_{k\in K^+} \psi_{i_k,j_k}\sqcup\bigsqcup_{k\in K^-}\psi_{i_k,j_k}\inv.$$

Note that $\psi$ is well defined because an element of $K^+$ is never followed by an element of $K^-$ and vice-versa, since $w$ is a reduced word. By construction, for all $k\in\{1,...,n\}$, $w_k\cdots w_1(T,\psi)$ has domain $A_{i_0,j_0}$ and range $A_{i_k,j_k}$. In particular, the range of $w_n\cdots w_1(T,\psi)=w(T,\psi)$ is disjoint from its domain.  

We now seek $U'\in [\RR]$ (respectively $U'\in\mathrm{APER}\cap[\RR]$) near to $U$ and which extends $\varphi$. First recall that we have
\begin{equation*}
d_u(U,U_{X\setminus {\bigsqcup_{j=0}^{2(n+1)^2}T^j(A)}})<\frac \delta 2,
\end{equation*}
and that the domain as well as the range of $\psi$ are subsets of $\bigsqcup_{j=0}^{2(n+1)^2}T^j(A)$. 

We now consider the two cases of the theorem separately.
In the first case, we extend $\psi$ to an element $\tilde U$ of the full group of the restriction of $\mathcal R$ to $\bigsqcup_{j=0}^{2(n+1)^2}T^j(A)$. In the second case, because $\psi$ is obtained by gluing together pre-$p$-cycles with disjoint supports, the aperiodicity of $\mathcal R$ enables us to extend it to an aperiodic element $\tilde U$ of the full group of the restriction of $\mathcal R$ to $\bigsqcup_{j=0}^{2(n+1)^2}T^j(A)$.

We may then define 
\begin{equation*}
U'(x)=\left\{\begin{array}{cl}\tilde U(x) & \text{ if }x\in \bigsqcup_{j=0}^{2(n+1)^2}T^j(A) \\ U_{X\setminus\bigsqcup_{j=-n}^nT^j(A)}(x) & \text{ else.}\end{array}\right.
\end{equation*}
We clearly have $d_u(U,U')<\delta$, and since $U'$ extends $\psi$, we hvae $w(T,U')\neq \mathrm{id}_X$. Finally, $U'$ belongs to $[\RR]$ (respectively to $\mathrm{APER}\cap[\RR]$).
\end{proof}

We now apply this theorem to give a characterization of aperiodic pmp equivalence relations of cost one, using the results of the previous section.

\begin{thm}\label{Gdelta}
Let $\mathcal R$ be any pmp equivalence relation, and consider the set $$G=\{(T,U)\in (\mathrm{APER}\cap [\mathcal R])\times[\mathcal R]: \overline{\la T,U\ra}=[\mathcal R]\text{ and } \la T,U\ra\simeq\mathbb F_2\}.$$
Then the following assertions are equivalent.
\begin{enumerate}[(1)]
\item $\mathcal R$ is aperiodic of cost one.
\item $G$ is a dense $G_\delta$ in $(\mathrm{APER}\cap [\mathcal R])\times[\mathcal R]$.
\end{enumerate}
\end{thm}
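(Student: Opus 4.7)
The plan is to prove the two implications separately.

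For $(1)\Rightarrow(2)$, I would write $G = G_1 \cap G_2$ with $G_1 = \{(T,U): \overline{\la T,U\ra}=[\mathcal R]\}$ and $G_2 = \{(T,U): \la T,U\ra \simeq \mathbb F_2\}$, both seen as subsets of $(\mathrm{APER}\cap[\mathcal R])\times[\mathcal R]$. Proposition \ref{littlegdelta} immediately gives that $G_1$ is a dense $G_\delta$. For $G_2$, one has
\[G_2 = \bigcap_w \{(T,U): w(T,U)\neq 1\}\]
where $w$ ranges over the countable set of nontrivial reduced words in two letters; each such set is open by continuity of the group operations. Density follows from Theorem \ref{freegp}(1) applied with $n=1$: for any aperiodic $T$ (which necessarily has infinite order) the slice $\{U\in[\mathcal R]: w(T,U)\neq 1\}$ is dense in $[\mathcal R]$, so each set in the intersection is dense in the product. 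Since $(\mathrm{APER}\cap[\mathcal R])\times[\mathcal R]$ is Polish ($\mathrm{APER}$ is closed in $\Aut(X,\mu)$), the Baire category theorem delivers that $G = G_1\cap G_2$ is a dense $G_\delta$.

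For $(2)\Rightarrow(1)$, nonemptiness of $G$ produces an aperiodic element $T\in[\mathcal R]$; since the orbits of such a $T$ are all infinite and sit inside $\mathcal R$-classes, $\mathcal R$ itself is aperiodic. To obtain $\Cost(\mathcal R)=1$, I would use density of $G$ to find for each $\epsilon>0$ some $(T_\epsilon, U_\epsilon)\in G$ with $\mu(\supp U_\epsilon) = d_u(U_\epsilon,\mathrm{id}_X) < \epsilon$. A short Borel-Cantelli argument, picking for any prescribed $V\in[\mathcal R]$ a subsequence $V_{n_k}\in\la T_\epsilon, U_\epsilon\ra$ with $\mu(\{V_{n_k}\neq V\})<2^{-k}$, shows that topological generators of $[\mathcal R]$ in fact generate $\mathcal R$ as an equivalence relation. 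Hence the graphing consisting of $T_\epsilon$ together with the restriction of $U_\epsilon$ to $\supp U_\epsilon$ still generates $\mathcal R$ and has total measure at most $1+\epsilon$, so $\Cost(\mathcal R)\leq 1$. Combined with $\Cost(\mathcal R)\geq 1$ (which holds for any aperiodic $\mathcal R$, by integrating the standard ergodic lower bound of Levitt against the ergodic decomposition), we conclude $\Cost(\mathcal R)=1$.

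The main point to verify is the density of $G_2$ with respect to the restricted ambient space $(\mathrm{APER}\cap[\mathcal R])\times[\mathcal R]$ rather than $[\mathcal R]^2$, but this causes no real trouble because Theorem \ref{freegp}(1) applies to every $T$ of infinite order, in particular to every aperiodic $T\in[\mathcal R]$. The Borel-Cantelli step in $(2)\Rightarrow(1)$ is the only mildly non-formal point, and it is a standard observation about the uniform metric on $[\mathcal R]$.
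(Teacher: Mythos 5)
Your proof is correct and follows essentially the same route as the paper: the implication $(1)\Rightarrow(2)$ is obtained by intersecting the dense $G_\delta$ from Theorem \ref{freegp} with the one from Proposition \ref{littlegdelta}, and $(2)\Rightarrow(1)$ uses that aperiodic elements force $\mathcal R$ to be aperiodic together with topological generators of arbitrarily small second support to bound the cost by one. Your Borel--Cantelli argument merely spells out the standard fact (asserted in the paper) that topological generators of $[\mathcal R]$ generate $\mathcal R$, and stating the cost bound directly rather than by contradiction is only a cosmetic difference.
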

\begin{proof}
Suppose first that $\mathcal R$ is a aperiodic. Then Theorem \ref{freegp} yields that the set 
$$G_1=\{(T,U)\in (\mathrm{APER}\cap [\mathcal R])\times[\mathcal R]:\la T,U\ra\simeq\mathbb F_2\}$$
is a dense $G_\delta$, while Proposition \ref{littlegdelta} asserts that the set 
$$G_2=\{(T,U)\in (\mathrm{APER}\cap [\mathcal R])\times[\mathcal R]:\overline{\la T,U\ra}=[\mathcal R]\}$$
is a dense $G_\delta$. So $G=G_1\cap G_2$ is a dense $G_\delta$.

Conversely, if $\mathcal R$ is not aperiodic, it is not topologically finitely generated, hence $G_2$ is empty, so in particular $G$ is empty. And if $\mathcal R$ is aperiodic but not of cost one, its cost has to be strictly greater than one, so suppose by contradiction that $G_2$ were dense in $(\mathrm{APER}\cap [\mathcal R])\times[\mathcal R]$.  Its vertical section would contain $U$'s of arbitrarily small support, which along with any $T\in[\mathcal R]$ would then fail to generate the equivalence relation $\mathcal R$ by the definition of the cost, so that they could not topologically generate the full group of $\mathcal R$, a contradiction.
\end{proof}

\section{A finer metric on full groups}

The notion of $\mathcal R$-conditional measure induces a new metric $d_C$ on the full group of $\mathcal R$, defined by 
$$d_C(S,T)=\norm{\mu_{\mathcal R}(\supp S\inv T)}_{\infty}.$$
In this section, we study this bi-invariant complete metric, which is non-separable as soon as $\mathcal R$ has infinitely many ergodic components, and show that $([\mathcal R],d_C)$ satisfies the automatic continuity property as soon as $\mathcal R$ is aperiodic.

First, remark that if $\mathcal R$ has only finitely many ergodic components, this metric induces the uniform topology on $[\mathcal R]$, so that our automatic continuity result is a generalization of \cite[Thm. 3.1]{MR2599891}. Note however that our proof follows the exact same outline as theirs.

\subsection{Automatic continuity for \texorpdfstring{$([\mathcal R],d_C)$}{([R],dC)}}

The following notion, introduced by Rosendal and Solecki \cite{MR2365867}, will be key in order to prove the automatic continuity property for full groups.

\begin{df}Let $n\in\N$. A topological group $G$ is said to be $n$-\textbf{Steinhaus} if for every subset $A$ of $G$ containing the identity such that $A=A\inv$ and such that $G$ is covered by countably many left translates of $A$, the set $A^n$ of products of at most $n$ elements of $A$ contains a neighborhood of the identity. 
\end{df}

For a survey on automatic continuity, including in particular a proof of the next proposition, see \cite{MR2535429}. 

\begin{prop}[Rosendal-Solecki]Suppose a topological group $G$ is $n$-Steinhaus for some fixed $n\in\N$. Then every homomorphism from $G$ into a separable group is continuous.
\end{prop}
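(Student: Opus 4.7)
The plan is a standard application of the Steinhaus property, following the argument that Rosendal and Solecki introduced to axiomatize automatic continuity proofs. Let $\phi : G \to H$ be a homomorphism into a separable topological group $H$, and let me aim to show continuity at the identity (continuity everywhere then follows from the group structure).

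First I would fix a neighborhood $V$ of $1_H$ and, using that $H$ is a topological group, pick a symmetric open neighborhood $W$ of $1_H$ with $W^{4n} \subseteq V$. The idea is to build a set $A \subseteq G$ to which the Steinhaus hypothesis can be applied and such that $A^n \subseteq \phi^{-1}(V)$.

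Since $H$ is separable, it is covered by countably many left translates $h_i W^{2}$, so $G = \bigcup_i \phi^{-1}(h_i W^{2})$. For each $i$ with $\phi^{-1}(h_i W^{2})$ non-empty, pick $k_i \in \phi^{-1}(h_i W^{2})$. Then for any $g \in \phi^{-1}(h_i W^{2})$,
$$\phi(k_i^{-1} g) \in (h_i W^{2})^{-1}(h_i W^{2}) = W^{-2} W^{2} = W^{4},$$
using that $W$ is symmetric. Hence $g \in k_i \, \phi^{-1}(W^{4})$. Setting $A = \phi^{-1}(W^{4})$, I obtain a symmetric subset containing the identity such that $G$ is covered by the countably many translates $k_i A$.

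Now I apply the $n$-Steinhaus hypothesis to conclude that $A^n$ contains a neighborhood $U$ of the identity in $G$. Since $\phi$ is a homomorphism, $A^n = \phi^{-1}(W^{4})^n \subseteq \phi^{-1}(W^{4n}) \subseteq \phi^{-1}(V)$. Therefore $U \subseteq \phi^{-1}(V)$, which proves continuity of $\phi$ at $1_G$. There is no real obstacle here beyond bookkeeping with exponents; the one subtle point to get right is choosing $W$ \emph{symmetric} so that the preimage $A$ is symmetric, and picking the power $W^{4n}$ (or any sufficient power) small enough relative to $V$ so that $A^n$ lands in $\phi^{-1}(V)$.
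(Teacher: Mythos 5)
Your proof is correct: the choice of a symmetric open $W$ with $W^{4n}\subseteq V$, the covering of $G$ by the translates $k_i\,\phi^{-1}(W^4)$ obtained from separability of $H$, and the application of the $n$-Steinhaus hypothesis to $A=\phi^{-1}(W^4)$ give exactly continuity at the identity, hence everywhere. The paper does not prove this proposition but defers to Rosendal's survey, and your argument is precisely the standard one given there, so there is nothing to compare beyond noting agreement.
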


We may now state the main theorem of this section.

\begin{thm}\label{autocont}Let $\mathcal R$ be a pmp aperiodic equivalence relation. Then
$([\mathcal R], d_C)$ is 38-Steinhaus. In particular, every homomorphism from $([\mathcal R],d_C)$ into a separable group is continuous.
\end{thm}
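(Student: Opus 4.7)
The plan is to follow the outline of Kittrell-Tsankov's proof of Theorem \ref{KTcontaut}, systematically replacing the ordinary measure $\mu$ by the $\mathcal R$-conditional measure $\mu_\mathcal R$ and its $L^\infty$-norm. Fix a symmetric set $A\subseteq[\mathcal R]$ containing the identity with $[\mathcal R]=\bigcup_{n\in\N}g_n A$; the task is to show that $A^{38}$ contains a $d_C$-neighborhood of the identity.

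First I would apply the Baire category theorem, which is valid because $([\mathcal R],d_C)$ is a complete metric space (non-separability being irrelevant for Baire category), to conclude that some translate $g_n A$ is somewhere dense in $([\mathcal R],d_C)$. After translating and using $A=A^{-1}$, this yields some $\varepsilon>0$ such that $A^2$ is $d_C$-dense in the open ball $B_{d_C}(1,\varepsilon)$. The goal then becomes to show that every $T\in[\mathcal R]$ with sufficiently small $\norm{\mu_\mathcal R(\supp T)}_\infty$ belongs to $A^{38}$.

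To achieve this, I would decompose any such small-support element as a bounded product of involutions (or short cycles) whose supports have prescribed $\mathcal R$-conditional measures. The crucial tool is Proposition \ref{maha} (Maharam's lemma), which allows us to split a Borel set into pieces of prescribed conditional measure \emph{uniformly} across ergodic components — this is exactly what the $d_C$ metric requires. Conjugation arguments then show that each such factor lies in a fixed power of $A$: given two involutions whose supports have the same $\mathcal R$-conditional measure, Proposition \ref{transitive} produces an element of $[\mathcal R]$ conjugating one to the other, so that a ``test'' involution lying in $A^2$ near the identity can be moved to any other small-support involution at a bounded multiplicative cost.

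The main obstacle will be the careful tracking of $L^\infty$-norms of conditional measures through the decomposition: every splitting step must be performed via Maharam's lemma (rather than by cutting along $\mu$), and every transitivity step must invoke Proposition \ref{transitive} (rather than the $\mu$-preserving transitivity used by Kittrell-Tsankov). Once this bookkeeping is in place, the proof mirrors the ergodic case step by step, with the specific constant $38$ emerging from a finite chain of such constructions. The second assertion — continuity of every homomorphism from $([\mathcal R],d_C)$ into a separable group — then follows immediately from the Rosendal-Solecki proposition stated above.
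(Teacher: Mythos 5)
There is a genuine gap, and it sits at the heart of any Steinhaus argument. Your Baire category step only yields that $A^2$ is \emph{dense} in some $d_C$-ball around the identity; nothing in the remainder of the sketch upgrades density to actual membership, and that upgrade is exactly the hard part. The paper (following Kittrell--Tsankov \cite{MR2599891}) never uses Baire category at all: concrete elements of $A^2$ with prescribed behaviour are produced by uncountable pigeonhole arguments against the countable covering $[\mathcal R]=\bigcup_m g_mA$. Concretely, Step 1 of the paper's proof (taken verbatim from \cite{MR2599891}) produces a set $B$ of constant positive conditional measure such that \emph{every} $T$ supported in $B$ agrees on $B$ with some element of $A^2$; and Step 2 produces an honest involution $S\in A^2$ supported in $B$ whose support has constant conditional measure at most $\mu(B)/2$, by taking an uncountable increasing family of involutions $(T_t)$ inside a commutative subgroup and observing that two of them must lie in the same translate $g_mA$, so that $T_{t_1}\inv T_{t_2}\in A^2$. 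An element of $A^2$ that is merely $d_C$-close to a chosen involution --- which is all your density argument provides --- need not be an involution and need not have support of controlled conditional measure, so the ``test involution lying in $A^2$ near the identity'' is not available by your route.

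The second gap is the conjugation step. Proposition \ref{transitive} produces a conjugator that is an arbitrary element of $[\mathcal R]$, and conjugating an element of $A^2$ by an arbitrary group element does not keep you in any bounded power of $A$; as stated, ``moving the test involution at a bounded multiplicative cost'' is circular. The paper's way around this is again the local-realization Step 1: the conjugators $U$ and $V$ it needs are supported in the distinguished set, so they can be replaced by elements $\tilde U,\tilde V\in A^2$ agreeing with them there, the conjugation identities $\tilde US\tilde U\inv=USU$ and $T=\tilde VS\tilde US\tilde U\inv\tilde V\inv$ still hold, and one gets every involution supported in $C$ into $A^{12}$, hence $[\mathcal R]_C\subseteq A^{36}$ by Ryzhikov's three-involutions theorem. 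A related trick is needed at the very end: the conjugator carrying the support of a given small element into $C$ is written as $g_m$ times an element of $A$ using the covering (with the target $D=\bigcup_m g_mD_m$ chosen in advance to absorb this twist), which is where $38=1+36+1$ comes from. Your observations that Maharam's lemma and Proposition \ref{transitive} are the right conditional-measure substitutes for the ergodic-case tools are correct and do transfer, but without Step 1 and the uncountable-pigeonhole production of $S$ in Step 2 the proposed proof does not go through.
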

\begin{proof}
Take $(B_n)_{n\in\N}$ which partition $X$, each of them having constant positive conditional measure. For any $B\subseteq X$, let $[\mathcal R]_B=\{T\in[\mathcal R]: \supp T\subseteq B\}$. The proof of the first step goes verbatim as in \cite{MR2599891}, so we omit it.
\begin{step}\label{firststep} There exists $n\in\N$ such that $$\forall T\in [\mathcal R]_{B_n}\exists S\in W^2, S_{\restriction B_n}=T_{\restriction B_n}$$
\end{step}
We fix $n$ as in the previous lemma, and let $B=B_n$.

\begin{step}\label{secondstep}$W^2$ contains an involution $S$ whose support is contained in $B$, and has constant conditional expectation lesser than $\mu(B)/2$.
\end{step}
Fix disjoint $A_1,A_2\subseteq B$ of constant conditional measure $\mu(B)/4$, and fix an involution $T$ supported in $A_1\cup A_2$, exchanging them. Now one can find an increasing family $(A_t)_{t\in(0,\mu(B)/4)\mapsto}$ of subsets of $A_1$ where each $A_t$ has constant conditional measure equal to $t$ (for instance,  Lemma \ref{maha} provides such a family for dyadic $t$, and one can then use the completeness of the measure algebra to extend this uniquely for $t\in (0,\mu(B)/4)$). 

Let $T_t=T_{\restriction A_t\cup T(A_t)}\sqcup \mathrm{id}_{X\setminus(A_t\cup T(A_t)}$, note that these involutions all belong to the commutative group $[\mathcal R_T]$, and as there are uncountably many such $T_t$'s, there is $m$ such that $g_mW$ contains two distincts elements $T_{t_1}, T_{t_2}$ with $t_1<t_2$. Now $S=T_{t_1}T_{t_2}=T_{t_1}\inv T_{t_2}$ belongs to $Wg_m\inv g_mW=W^2$ and has support $A_{t_2}\setminus A_{t_1}\sqcup T(A_{t_2}\setminus A_{t_1})$, which has constant conditional expectation lesser than $\mu(B)/2$. \\

We fix such an $S$, and a subset $C$ of $B$ such that $\supp S\subseteq C$ and $\mu_R(C)=2\mu_R(\supp S)$. 

\begin{step}We have $[\mathcal R]_C\subseteq W^{36}$.
\end{step}

Indeed, let $T\in [\mathcal R]_C$ be an involution. We first construct an involution $U\in[\mathcal R]_C$ such that the support of the involution $S(USU)$ has conditional measure $f=\mu_{\mathcal R}(\supp T)$. By Lemma \ref{maha}, we may find $E\subseteq (C\setminus\supp S)$ which has conditional measure $\frac{f}2$. Let $F_1$ be a subset of a fundamental domain of $S_{\restriction \supp S}$ having conditional measure $\frac{\mu(C)-f}4$. Put $F_2=T(F_1)$, let $F=F_1\sqcup F_2$ and let $\varphi\in[[\mathcal R]]$ be a partial isomorphism of domain $E$ and range $F$ given by Lemma \ref{transitive}. Define the involution $U\in[\mathcal R]_C$ by
$$U(x)=\left\{\begin{array}{cc}\varphi(x) & \text{if }x\in E \\\varphi\inv(x) & \text{if } x\in F \\x & \text{else}\end{array}\right.
$$
Then the support of $S(USU)$ is $E\cup F$, hence has conditional measure $f=\mu_{\mathcal R}(\supp T)$. Thanks to step \ref{firststep}, we may find $\tilde U\in W^2$ which coincides with $U$ on $\supp S\subseteq B$. Then, $\tilde US\tilde U\inv=USU$ so that $S(USU)$ belongs to $W^8$. Because its support has the same conditional measure as the support of $T$, Lemma \ref{transitive} yields that $SUSU$ and $T$ are conjugated by an involution $V$ supported on $C$. Again, by step \ref{firststep}, we find $\tilde V$ such that $V$ and $\tilde V$ coincide on $C$, and we deduce that $T=VSUSUV=\tilde V S\tilde U S\tilde U\inv\tilde V\inv$ belongs to $W^{12}$.

Now by a result of Ryzhikov  \cite{MR1244984} any element of $[\mathcal R]_C$ is the product of at most 3 involutions in $[\mathcal R]_C$, so $[\mathcal R]_C\subseteq W^{36}$.\\

We can now conclude the proof exactly as in \cite{MR2599891}: let $(T_n)$ be a sequence of elements of $[\mathcal R]$ such that $d_C(T_n,1)\to 0$. Let $D_n=\supp T_n$, and $D=\bigcup_m g_mD_m$. We only need to show that there exists $m$ such that $T_m\in W^{38}$. Going to a subsequence if necessary, we may assume that $\sum \mu_{\mathcal R}(D_n)\leq \mu_{\mathcal R}(C)$, hence $\mu_{\mathcal R}(D)\leq \mu_{\mathcal R}(C)$. 
Now there is $A\subseteq C$ such that $\mu_{\mathcal R}(A)=\mu_{\mathcal R}(D)$. We may find $S\in [\mathcal R]$ such that $S(A)=D$; there is $m$ such that $S\in g_m W$, and we put $T= g_m\inv S\in W$. We have $D_m\subseteq T(A)$, and so $T\inv T_mT\in [\mathcal R]_A\subseteq [\mathcal R]_C\subseteq W^{36}$. Thus $T_m$ belongs to $WW^{36}W=W^{38}$, and we are done.
\end{proof}

\subsection{Connectedness}

Whenever $\mathcal R$ is of type $\mathrm I_n$, the metric $d_C$ induces the discrete topology. However, if $\mathcal R$ is aperiodic, the induced topology is connected.

\begin{lem}\label{apiscon}Let $\mathcal R$ be an aperiodic pmp equivalence relation. Then $([\mathcal R],d_C)$ is connected.
\end{lem}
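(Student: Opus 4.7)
The plan is to show that $([\mathcal R],d_C)$ is actually path-connected. Since the path-component $H$ of the identity in any topological group is a normal subgroup, and since every element of $[\mathcal R]$ is a product of at most three involutions by Ryzhikov's theorem (cited in the previous section, applied to $C=X$), it will suffice to exhibit, for each involution $\sigma\in[\mathcal R]$, a $d_C$-continuous path from the identity to $\sigma$.

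So I fix an involution $\sigma\in[\mathcal R]$ and a Borel fundamental domain $A_1$ for the action of $\sigma$ on its support, so that $\supp\sigma=A_1\sqcup \sigma(A_1)$. The key step is to construct an increasing family $(A_t)_{t\in[0,1]}$ of Borel subsets of $A_1$ with $\mu_\RR(A_t)=t\cdot\mu_\RR(A_1)$. Using Maharam's lemma (Proposition \ref{maha}) iteratively, I split $A_1$ dyadically: at level $n$, I have a partition of $A_1$ into $2^n$ pieces each of conditional measure $\mu_\RR(A_1)/2^n$, the partition at level $n+1$ refining that at level $n$ (this is exactly the $\mathcal R$-equivariant splitting trick of Remark \ref{split}). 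For a dyadic $t=k/2^n$ I let $A_t$ be the union of the first $k$ pieces at level $n$, and for general $t\in[0,1]$ I set $A_t=\bigcup\{A_s:s\leq t,\ s\text{ dyadic}\}$. Aperiodicity is used precisely here, through Proposition \ref{maha}.

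Having this family, I define
\[\sigma_t(x)=\begin{cases}\sigma(x)&\text{if }x\in A_t\cup\sigma(A_t),\\ x&\text{otherwise,}\end{cases}\]
which is an involution in $[\mathcal R]$ with $\sigma_0=\mathrm{id}_X$ and $\sigma_1=\sigma$. For $s\leq t$ the set $\{x:\sigma_s(x)\neq\sigma_t(x)\}$ equals $(A_t\setminus A_s)\sqcup\sigma(A_t\setminus A_s)$, which has $\mathcal R$-conditional measure $2(t-s)\mu_\RR(A_1)\leq 2(t-s)$. Hence $d_C(\sigma_s,\sigma_t)\leq 2(t-s)$, so $t\mapsto\sigma_t$ is Lipschitz, and in particular continuous, in $d_C$.

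No step is really an obstacle: the Lipschitz estimate is immediate from the bi-invariance of $d_C$ and the formula $\mu_\RR(\sigma(B))=\mu_\RR(B)$ for $\sigma\in[\mathcal R]$, and the group-theoretic reduction to involutions is standard. The only point that deserves mild care is the coherent dyadic application of Maharam's lemma to produce the increasing family $(A_t)$, which is also where aperiodicity enters; without it, Proposition \ref{maha} fails and the argument breaks down, consistently with the fact that $d_C$ induces the discrete topology on full groups of type $\mathrm I_n$ relations.
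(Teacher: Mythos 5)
Your proposal is correct and follows essentially the same route as the paper: reduce to involutions via Ryzhikov's theorem, use Maharam's lemma (aperiodicity) to build the increasing family $(A_t)$ of subsets of a fundamental domain with prescribed conditional measure, and connect an involution to the identity by the Lipschitz path $t\mapsto\sigma_t$. The only difference is cosmetic (you spell out the dyadic construction and the path-component argument that the paper leaves implicit).
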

\begin{proof}
By a result of Ryzhikov  \cite{MR1244984}, $[\mathcal R]$ is generated by involutions, so it suffices to connect involutions to the identity. But if $T$ is an involution, let $A$ be a fundamental domain of $T$. Since $\mathcal R$ is aperiodic we may, as in step \ref{secondstep} of the proof of Theorem \ref{autocont}, find an increasing family $(A_t)_{t\in[0,1]}$ of subsets of $A$ such that $\mu_{\mathcal R}(A_t)=t\mu_{\mathcal R}(A)$. Then define the involution $T_t$ by
$$T_t(x)=\left\{\begin{array}{ll}T(x) & \text{if }x\in A_t \\T\inv(x) & \text{if } x\in T(A_t) \\x & \text{else }\end{array}\right.$$
Then $d_C(T_{t_1},T_{t_2})\leq\abs{t_1-t_2}$, $T_0=\mathrm{id}_X$ and $T_1=T$, so that $T$ is connected to the identity.
\end{proof}

Putting together Lemma \ref{apiscon} and Theorem \ref{autocont}, we have the following consequence.

\begin{cor}\label{nomorzerodim}Let $\mathcal R$ be a pmp aperiodic equivalence relation. Then every morphism from $[\mathcal R]$ to a totally disconnected separable group is trivial.
\end{cor}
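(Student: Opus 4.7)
The plan is to deduce the corollary almost immediately by combining the two preceding results. Let $\varphi: [\mathcal R] \to G$ be any group homomorphism, where $G$ is a totally disconnected separable topological group. First, I would apply Theorem \ref{autocont}: since $\mathcal R$ is aperiodic, the metric group $([\mathcal R], d_C)$ satisfies the automatic continuity property, so $\varphi$ is continuous as a map from $([\mathcal R], d_C)$ to $G$.

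Next, I would invoke Lemma \ref{apiscon}, which ensures that $([\mathcal R], d_C)$ is connected (again using aperiodicity of $\mathcal R$). Since the continuous image of a connected space is connected, $\varphi([\mathcal R])$ is a connected subset of $G$. But in a totally disconnected space the only connected subsets are singletons, and $\varphi([\mathcal R])$ contains the identity of $G$ as it is the image of a group homomorphism. Hence $\varphi([\mathcal R]) = \{e_G\}$, i.e.\ $\varphi$ is trivial.

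There is essentially no obstacle here: the entire content has been packed into Theorem \ref{autocont} and Lemma \ref{apiscon}. The only subtlety worth double-checking is that the automatic continuity property is being applied to an abstract homomorphism whose target is a separable (not necessarily Polish) topological group, which is exactly the framework in which the Rosendal--Solecki/Steinhaus condition yields continuity. The proof is therefore just the chain: automatic continuity $+$ connectedness of the source $+$ total disconnectedness of the target $\Rightarrow$ trivial image.
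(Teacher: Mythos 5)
Your argument is exactly the paper's: the corollary is stated there as an immediate consequence of Theorem \ref{autocont} (automatic continuity of $([\mathcal R],d_C)$ into separable groups) together with Lemma \ref{apiscon} (connectedness of $([\mathcal R],d_C)$), and your chain of reasoning --- continuous image of a connected group is connected, hence a single point in a totally disconnected target --- is precisely the intended deduction. No gaps; your remark about the target only needing to be separable (not Polish) is also consistent with the Rosendal--Solecki framework used in the paper.
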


\begin{rmq}This implies that the full group of an aperiodic pmp equivalence relation cannot act nontrivially by homeomorphisms on a Cantor set, in particular in can never be abstractly isomorphic to the full group of a Cantor dynamical system, answering a question suggested by Vincent Tassion (for a definition of full groups in the topological setting, cf. \cite{MR1710743}).
\end{rmq}

We can then characterize aperiodic pmp equivalence relations algebraically.

\begin{cor}\label{caraper}Let $\mathcal R$ be a pmp equivalence relation. Then $\mathcal R$ is aperiodic iff every morphism from its full group to a totally disconnected separable group is trivial. Moreover, when $\mathcal R$ is periodic, it admits a non-trivial morphism onto $\Z/2\Z$.
\end{cor}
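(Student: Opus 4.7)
The plan is to handle the two implications separately. The forward direction is essentially the content of Corollary \ref{nomorzerodim}: if $\mathcal R$ is aperiodic and $\psi\colon[\mathcal R]\to H$ is any morphism into a separable totally disconnected group $H$, then Theorem \ref{autocont} gives that $\psi$ is $d_C$-continuous, and Lemma \ref{apiscon} shows $([\mathcal R],d_C)$ is connected, so $\psi([\mathcal R])$ is a connected subset of the totally disconnected space $H$, hence a singleton.

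For the converse and the moreover, assume $\mathcal R$ is periodic, i.e.\ not aperiodic; I will produce a morphism from $[\mathcal R]$ onto $\Z/2\Z$, which yields both statements at once since $\Z/2\Z$ is separable and totally disconnected. The function $x\mapsto\abs{[x]_\mathcal R}$ is $\mathcal R$-invariant and Borel, and by assumption takes some finite value on a positive invariant set; since the $\mathcal R$-fixed points are pointwise fixed by every element of $[\mathcal R]$ and so may be safely removed from $X$ without changing $[\mathcal R]$, I may assume there is a positive $\mathcal R$-invariant $A\subseteq X$ on which every class has constant cardinality $n\geq 2$.

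On such an $A$, the signature of the permutation induced on each class gives a group morphism
\[\epsilon\colon[\mathcal R]\to V:=\LL^0(A/\mathcal R,\Z/2\Z),\qquad T\mapsto\bigl(C\mapsto\mathrm{sgn}(T_{\restriction C})\bigr).\]
Using the standardness of $A/\mathcal R$ and the uniform orbit size $n$, I produce an involution $T_0\in[\mathcal R_{\restriction A}]$ acting as a transposition on every class of $\mathcal R_{\restriction A}$—for instance, pick a Borel ordering of each $n$-element class and swap the first two points—so that $\epsilon(T_0)=1_{A/\mathcal R}\neq 0$ in $V$. Thus the image of $\epsilon$ is a non-zero $\Z/2\Z$-vector subspace of $V$; extending the linear map $\epsilon(T_0)\mapsto 1$ by Zorn's lemma to a $\Z/2\Z$-linear functional $V\to\Z/2\Z$ and composing with $\epsilon$ yields the required non-trivial (and hence onto) morphism $[\mathcal R]\to\Z/2\Z$. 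The main technical point, beyond citing the previous results, is the measurable construction of $T_0$, which is a standard Borel selection argument exploiting the constant finite orbit size on $A$.
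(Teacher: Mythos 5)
Your proposal is correct and follows essentially the same route as the paper: the forward implication is exactly Corollary \ref{nomorzerodim} (automatic continuity for $d_C$ plus connectedness), and the converse uses the classwise signature on a positive invariant set of constant finite class size to map $[\mathcal R]$ into the $\Z/2\Z$-vector space $\LL^0(\cdot,\Z/2\Z)$, then a choice/Zorn argument to project onto $\Z/2\Z$, which is precisely the paper's construction (the paper phrases the signature via a fundamental domain and $\LL^0(B,\mathfrak S_n)$, a purely cosmetic difference). Your explicit Borel transposition $T_0$ and your discarding of singleton classes just make explicit what the paper leaves implicit (its proof tacitly needs $n\geq 2$ to get a non-trivial element in the image), so there is no substantive divergence.
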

\begin{proof}
By Corollary \ref{nomorzerodim}, we only need to show that if $\mathcal R$ is not aperiodic, then there exists a non-trivial morphism $[\mathcal R]\to \Z/2\Z$. So let $\mathcal R$ be a non-aperiodic equivalence relation, then we find an $\mathcal R$-invariant positive set $A$ and $n\in\N$ such that all the elements of $A$ have $\mathcal R$-classes of cardinality $n$. Then we have a morphism $\pi_A:[\mathcal R]\to[\mathcal R]_A$ given by restriction. Let $B$ be a fundamental domain of $\mathcal R_{\restriction A}$, then we may identify the full group of $\mathcal R_{\restriction A}$ with the group of measurable maps from $B$ to the symmetric group on $n$ elements. Using the signature and $\pi_A$, we get a non-trivial morphism $\varphi$ from $[\mathcal R]$ to the group $\LL^0(B,\Z/2\Z)$ of measurable maps from $B$ to $\Z/2\Z$. Such a group has only elements of order 2, so it is a $\Z/2\Z$ vector space. Fix a non-trivial element $y$ in the image of $\varphi$, then using the axiom of choice we find a projection $p: \LL^0(B,\mathfrak S_2)\to\Z/2\Z$ onto the vector space spanned by $y$. Then $p\circ \varphi$ is a non-trivial morphism from $[\mathcal R]$ to $\Z/2\Z$.
\end{proof}

Note that this theorem implies that if $\mathcal R$ is not aperiodic, then its full group equipped with the \textit{uniform} metric does not satisfy the automatic continuity property, as it is connected but admits a morphism onto $\Z/2\Z$. It would be interesting to characterize the full group of $\mathcal R$ equipped with the uniform metric in terms of the automatic continuity property, namely to answer the following question.

\begin{qu} Let $\mathcal R$ be a pmp aperiodic equivalence relation. Is every morphism from $([\mathcal R],d_u)$ to a separable group continuous?
\end{qu}

\section{Extreme amenability}\label{exam}

This section is devoted to giving a simple proof of the following proposition, which was proved by Giordano and Pestov in the ergodic case \cite[Prop. 5.3]{MR2311665}. 

\begin{prop}\label{extam}Let $\mathcal R$ be a hyperfinite equivalence relation, then its full group is extremely amenable.
\end{prop}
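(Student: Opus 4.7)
The plan is to decompose $\mathcal{R}$ as an increasing union $\mathcal{R}=\bigcup_n \mathcal{R}_n$ of finite pmp equivalence relations (which is possible by the very definition of hyperfiniteness), reduce the proof of extreme amenability to the case of each $[\mathcal{R}_n]$, and then pass to the closure. Since $\mathcal{R}_n\subseteq \mathcal{R}_{n+1}$ implies $[\mathcal{R}_n]\subseteq[\mathcal{R}_{n+1}]$, the union $H=\bigcup_n[\mathcal{R}_n]$ is a (directed) subgroup, and Theorem \ref{ktdense} ensures $H$ is dense in $[\mathcal{R}]$.

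Next, I identify $[\mathcal{R}_n]$ with a product of $L^0$-groups. Partition $X$ into the $\mathcal{R}_n$-invariant sets $X_k=\{x:|[x]_{\mathcal{R}_n}|=k\}$ and choose a Borel fundamental domain $Y_k$ for $\mathcal{R}_n|_{X_k}$. Specifying an element of $[\mathcal{R}_n|_{X_k}]$ amounts to a measurable assignment of a permutation of each $k$-element class, yielding a topological group isomorphism $[\mathcal{R}_n|_{X_k}]\cong L^0(Y_k,\mathfrak{S}_k)$ (with $\mathfrak{S}_k$ discrete and the uniform metric on the left corresponding to $d_1(f,g)=\nu(\{y:f(y)\neq g(y)\})$ on the right). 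Hence $[\mathcal{R}_n]\cong\prod_k L^0(Y_k,\mathfrak{S}_k)$ as topological groups.

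Now I invoke the announced input: by Glasner's theorem \cite[Thm. 1.3]{MR1617456}, which proves extreme amenability of $L^0$-groups of measurable maps into a compact (metric) group via concentration of measure on symmetric powers, each $L^0(Y_k,\mathfrak{S}_k)$ is extremely amenable. A countable product of extremely amenable groups is extremely amenable (given a continuous action on a compact $K$, one obtains fixed point sets factor by factor, each of which is nonempty and closed, and one intersects inductively), so each $[\mathcal{R}_n]$ is extremely amenable.

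Finally, to conclude for $[\mathcal{R}]$: given a continuous action on a compact Hausdorff space $K$, let $F_n\subseteq K$ be the closed set of $[\mathcal{R}_n]$-fixed points. By extreme amenability $F_n\neq\emptyset$, and the inclusion $[\mathcal{R}_n]\subseteq[\mathcal{R}_{n+1}]$ gives $F_{n+1}\subseteq F_n$. By compactness $\bigcap_n F_n\neq\emptyset$, and any point of the intersection is fixed by the dense subgroup $H$, hence by continuity by all of $[\mathcal{R}]$. The main work is in step three, namely confirming that Glasner's statement applies with $G$ a finite (in particular discrete, not connected) group — this is fine since his proof relies only on the amenability of $G$ and concentration on its symmetric powers — and the author notes in the introduction that this black-box invocation is precisely what prevents the argument from yielding the stronger conclusion that $[\mathcal{R}]$ is a Lévy group.
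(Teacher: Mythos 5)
Your proof is correct and follows essentially the same route as the paper: decompose $\mathcal R$ as an increasing union of finite relations, show $\bigcup_n[\mathcal R_n]$ is dense, identify each $[\mathcal R_n]$ with a product of groups $\LL^0(Y_k,\mathfrak S_k)$ which are extremely amenable by Glasner's theorem, and pass to the closure by the standard nested-fixed-point-set compactness argument. The only cosmetic difference is that the uniform metric corresponds on $\LL^0(Y_k,\mathfrak S_k)$ to the integrated Hamming metric rather than to $\nu(\{y:f(y)\neq g(y)\})$, but these are Lipschitz equivalent, so nothing changes.
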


The previous proposition is a straightforward application of the two following lemmas. The first one is an immediate consequence of Theorem \ref{ktdense}, but we give here a direct proof.
\begin{lem}
Let $\mathcal R=\bigcup_n \mathcal R_n$ be a pmp equivalence relation, where the $\mathcal R_n$'s are increasing finite equivalence relations. Then $\bigcup_n[\mathcal R_n]$ is dense in $[\mathcal R]$.
\end{lem}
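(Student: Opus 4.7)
The plan is to approximate an arbitrary $T \in [\mathcal R]$ in the uniform metric by elements of $[\mathcal R_n]$ for large $n$, using the class-finiteness of $\mathcal R_n$ to construct explicit approximants. Fix $T \in [\mathcal R]$ and $\epsilon > 0$; the goal is to produce some $n$ and $T_n \in [\mathcal R_n]$ with $d_u(T, T_n) < \epsilon$.

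First, I would set
\[
A_n = \{x \in X : T(x) \,\mathcal R_n\, x\}.
\]
Since $T(x) \,\mathcal R\, x$ almost everywhere and $\mathcal R = \bigcup_n \mathcal R_n$ with $\mathcal R_n$ increasing, we have $A_n \uparrow X$ modulo null sets, so by continuity of the measure we can choose $n$ with $\mu(A_n) > 1 - \epsilon$. This is the only place where $\bigcup_n \mathcal R_n = \mathcal R$ is used.

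Next, I construct $T_n \in [\mathcal R_n]$ class-by-class. For each (finite) $\mathcal R_n$-class $C$, the partial map $T_{\restriction C \cap T^{-1}(C)} : C \cap T^{-1}(C) \to T(C) \cap C$ is a bijection between two subsets of $C$ of the same cardinality, so its complementary sets $C \setminus T^{-1}(C)$ and $C \setminus T(C)$ also have equal cardinality. Choose any bijection between the two complements (this can be done Borel-measurably in $x$ using a fixed Borel linear order on $X$, which enumerates the points of each finite class in a uniform way), and let $\sigma_C : C \to C$ be the permutation agreeing with $T$ on $C \cap T^{-1}(C)$ and with the chosen bijection on $C \setminus T^{-1}(C)$. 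Setting $T_n(x) = \sigma_{[x]_n}(x)$ gives a Borel bijection of $X$ that permutes each $\mathcal R_n$-class; since $\mathcal R_n$ is pmp, such a bijection is automatically measure-preserving, so $T_n \in [\mathcal R_n]$.

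Finally, by construction $T_n(x) = T(x)$ whenever $x \in C \cap T^{-1}(C)$ for some $\mathcal R_n$-class $C$, i.e.\ whenever $T(x) \in [x]_n$, which is exactly the defining condition of $A_n$. Therefore
\[
d_u(T, T_n) \leq \mu(X \setminus A_n) < \epsilon.
\]
The one substantive point is the Borel measurability of the choice of $\sigma_C$; this is the only real obstacle, and it is handled by a Borel selector for the finite equivalence relation $\mathcal R_n$ together with a Borel linear order on $X$ to canonically pair up the unmatched points within each class. Everything else is immediate from the definitions.
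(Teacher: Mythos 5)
Your proof is correct and follows essentially the same route as the paper: choose $n$ so that $T(x)\,\mathcal R_n\,x$ off a set of measure $<\epsilon$, view the restriction of $T$ to that set as a partial bijection of each finite $\mathcal R_n$-class, and extend it class-by-class to an element of $[\mathcal R_n]$ using a Borel linear order to make the choice measurable. Your version just spells out the cardinality matching of the complements and the automatic measure-preservation a bit more explicitly than the paper does.
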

\begin{proof}
Let $\varphi\in [\mathcal R]$ and $\epsilon>0$, since the $\mathcal R_n$'s exhaust $\mathcal R$ there exists $n\in\N$ such that $\mu(\{x\in X: \varphi(x)\, \mathcal R_n\, x\})>1-\epsilon$. Now put $\psi(x)=\varphi(x)$ whenever $\varphi(x)\,\mathcal R_n\,x$. We can extend $\psi$ to an element $\tilde\psi$ of the full group of $\mathcal R_n$. Indeed, $\psi$ induces a partial bijection in every $\mathcal R_n$-class, and every such partial bijection can be extended in a Borel way to have full domain (using a Borel total order on $X$, one can for instance send the first element of each equivalence class not in the domain of $\psi$ to the first element of the class not in the range of $\psi$, etc.). By definition, $d_u(\tilde\psi,\varphi)<\epsilon$.
\end{proof}

\begin{lem}
Let $\mathcal R$ be a finite equivalence relation. Then its full group is extremely amenable.
\end{lem}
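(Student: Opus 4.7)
My plan is to decompose $X$ into $\mathcal R$-invariant pieces of constant class cardinality, identify the full group on each piece with a space of measurable maps into a finite symmetric group, and then combine Glasner's concentration-of-measure theorem with the standard fact that a countable product of extremely amenable groups is extremely amenable.

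Concretely, for $n\in\N$ let $X_n$ be the $\mathcal R$-invariant Borel set on which the classes have cardinality exactly $n$; since $\mathcal R$ is finite these partition $X$, and the uniform metric splits as a countable sum so that there is a natural topological group isomorphism
$$[\mathcal R]\simeq \prod_{n\in\N}[\mathcal R_{\restriction X_n}].$$
For each $n$ I would fix a Borel fundamental domain $B_n\subseteq X_n$ for $\mathcal R_{\restriction X_n}$, and, labelling the $n$ elements of each class by $\{1,\dots,n\}$, obtain an isometric isomorphism between $[\mathcal R_{\restriction X_n}]$ (with the uniform metric) and the Polish group $\LL^0(B_n,\mathfrak S_n)$ of measurable maps, equipped with the topology of convergence in measure. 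Here we are using the convention, standard in this paper, that $(X,\mu)$ is atomless, so that each $B_n$ is atomless.

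Now I would invoke Glasner's theorem \cite[Thm. 1.3]{MR1617456}, which states that for any compact metric group $K$ and atomless probability space $(Y,\nu)$ the group $\LL^0(Y,\nu,K)$ is a Lévy group, hence extremely amenable. Applied with $K=\mathfrak S_n$ this makes each $[\mathcal R_{\restriction X_n}]$ extremely amenable. To conclude I would use the standard fact that a countable product $\prod_n G_n$ of extremely amenable groups acting continuously on a compact Hausdorff $K$ has a fixed point: the closed set of $G_1$-fixed points is nonempty, invariant under all the other commuting factors; iterating, I build a decreasing sequence of nonempty closed sets whose intersection (nonempty by compactness) is fixed by the dense subgroup $\bigoplus_n[\mathcal R_{\restriction X_n}]$, hence by $[\mathcal R]$ by continuity. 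The only substantive step is the appeal to Glasner's concentration theorem; everything else is bookkeeping, and this is precisely what allows us to bypass the ergodicity assumption of Giordano--Pestov without having to check the Lévy property for $[\mathcal R]$ directly.
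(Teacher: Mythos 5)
Your proof is correct and follows essentially the same route as the paper: decompose into the invariant sets $X_n$ of constant class size, identify each $[\mathcal R_{\restriction X_n}]$ with $\LL^0$ of a fundamental domain into $\mathfrak S_n$, apply Glasner's theorem, and pass to the countable product. The only difference is that you spell out the (standard) fixed-point argument for products of extremely amenable groups, which the paper takes for granted.
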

\begin{proof}
For $n\in\N$, let $X_n$ be the set of $x\in X$ whose $\mathcal R$-class has cardinality $n$. Then we have a topological group isomorphism between the full group of $\mathcal R$ and the product
$\prod_{n\in\N}[\mathcal R_{\restriction X_n}]$. Let $A_n$ be a transversal for $\mathcal R_{\restriction X_n}$, then we can identify $[\mathcal R_{\restriction X_n}]$ with the group $\LL^0(A_n,\mathfrak S_n)$ of measurable maps from $A_n$ to $\mathfrak S_n$. This identification is an isometry if we put on $\LL^0(A_n,\mathfrak S_n)$ the $\LL^1$ metric $d_1$ defined by:
$$d_1(f,g)=\int_{A_n}d_{\mathrm{Ham}}(f(x),g(x))d\mu(x),$$
where $d_{\mathrm{Ham}}$ is the Hamming metric on $\mathfrak S_n$ (i.e. $d_{\mathrm{Ham}}(\sigma,\sigma')=\mathrm{Card}(\{x\in\{1,...,n\}: \sigma(x)\neq\sigma'(x)\})$). Then, a theorem of Glasner (\cite[Thm. 1.3]{MR1617456}, see also \cite[Thm. 2.20]{MR2311665}) implies that $[\mathcal R_{\restriction X_n}]=\LL^0(A_n,\mathfrak S_n)$ is extremely amenable since $\mathfrak S_n$ is compact. We conclude that $[\mathcal R]=\prod_{n\in\N}[\mathcal R_{\restriction X_n}]$ is also extremely amenable.
\end{proof}

\begin{rmq}In order to prove Proposition \ref{extam}, one could also use Dye's theorem and identify the full group of the aperiodic part $\mathcal R_{\infty}$ of $\mathcal R$ with $\LL^0(Y_{\mathcal R_\infty},[\mathcal R_0])$, which is easily seen to be extremely amenable once we know that $[\mathcal R_0]$ is extremely amenable. Such a proof actually yields that the full group of any hyperfinite equivalence relation is a Levy group. 
\end{rmq}

Using the theorem of Connes, Feldmann and Weiss, and the fact that the full group acts continuously by affine isometric transformations on the field of invariant means on $\mathcal R$, one can easily see that the converse of Proposition \ref{extam} holds, and so we get the following corollary.

\begin{thm}\label{caraextam}Let $\mathcal R$ be a pmp equivalence relation. Then the following statements are equivalent.
\begin{enumerate}[(i)]
\item $\mathcal R$ is hyperfinite.
\item The full group of $\mathcal R$ is amenable.
\item The full group of $\mathcal R$ is extremely amenable.
\end{enumerate}
\end{thm}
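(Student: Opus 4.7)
The equivalence splits into three implications. The implication (i) $\Rightarrow$ (iii) is exactly Proposition \ref{extam}, and (iii) $\Rightarrow$ (ii) is immediate since any continuous affine isometric action of a topological group on a compact convex subset of a locally convex space is, in particular, a continuous action on a compact Hausdorff space. All the content of the theorem thus lies in the converse direction (ii) $\Rightarrow$ (i), which I now outline.

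The plan is to deduce, from the amenability of $[\mathcal R]$, the amenability of $\mathcal R$ as a pmp equivalence relation; the Connes-Feldman-Weiss theorem \cite{MR662736} will then upgrade this to hyperfiniteness. To do so I will produce a nonempty compact convex subset $K$ of a locally convex topological vector space, on which $[\mathcal R]$ acts continuously by affine isometries, in such a way that the fixed points of the action correspond exactly to invariant means on $\mathcal R$. The amenability of $[\mathcal R]$ will then supply a fixed point, whence the conclusion.

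For $K$, I take the space of measurable fields of means on the orbits of $\mathcal R$: an element of $K$ is a weak-$*$ measurable assignment $x \mapsto m_x$, where each $m_x$ is a mean on the orbit $[x]_{\mathcal R}$, considered up to $\mu$-negligible changes. Such a field can be realised concretely as a state on $\LL^\infty(\mathcal R)$, formed with respect to the counting measure on orbits integrated against $\mu$, whose restriction to $\LL^\infty(X,\mu)$ equals integration against $\mu$. Equipped with the weak-$*$ topology, $K$ is nonempty, compact, and convex. The full group acts on $K$ by pushforward along orbits, $(T \cdot m)_x = T_* m_{T\inv(x)}$, and this action preserves the affine structure and the norm. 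Using Proposition \ref{transitive} to move any two points of a common orbit by an element of $[\mathcal R]$, one checks that the fixed points of this action are exactly the $\mathcal R$-invariant fields of means, i.e., the invariant means on $\mathcal R$ in the sense of Zimmer.

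The technical heart of the argument is the continuity of the action of $[\mathcal R]$ on $K$, where $[\mathcal R]$ carries the uniform topology and $K$ the weak-$*$ topology. Given $T_n \to T$ uniformly in $[\mathcal R]$ and $m \in K$, one must show that $\int f\, d(T_n \cdot m) \to \int f\, d(T \cdot m)$ for every $f \in \LL^\infty(\mathcal R)$, which boils down to the fact that the graphs of $T_n$ and $T$ coincide outside a set of arbitrarily small $\mu$-measure, combined with the normalisation of the means in $K$. Once continuity is established, the amenability of $[\mathcal R]$ produces a fixed point, hence an invariant mean on $\mathcal R$, and Connes-Feldman-Weiss finishes the proof.
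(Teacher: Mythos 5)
Your overall structure is the same as the paper's: (i)$\Rightarrow$(iii) is Proposition \ref{extam}, (iii)$\Rightarrow$(ii) is formal, and (ii)$\Rightarrow$(i) is meant to go through an invariant mean on $\mathcal R$ plus Connes--Feldman--Weiss, which is exactly the one-sentence argument the paper gives. However, there is a genuine gap in your key step: the action you put on $K$ is the wrong one. With $(T\cdot m)_x=T_*m_{T\inv(x)}$, the field of Dirac masses $x\mapsto\delta_x$ is a fixed point for \emph{every} pmp equivalence relation, since $T_*\delta_{T\inv(x)}=\delta_{x}$ (and this field does lie in your $K$: it is weak-$*$ measurable and its restriction to $\LL^\infty(X,\mu)$ is integration against $\mu$). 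So the fixed-point set is always nonempty, the asserted identification ``fixed points $=$ invariant means in the sense of Zimmer'' is false, and producing a fixed point from amenability of $[\mathcal R]$ yields no information about $\mathcal R$; the argument as written would apply verbatim to a non-amenable relation. Restricting $K$ to means vanishing on finite sets does not obviously repair this: equivariance of the pushforward type is related (after integrating over $X$, in the case of a free action) to conjugation-invariant means, i.e.\ to inner-amenability-type conditions rather than to amenability of the relation.

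The fix is to let $[\mathcal R]$ act only by relabelling the base point, $(T\cdot m)_x=m_{T\inv(x)}$ (equivalently, act on $\LL^\infty(\mathcal R)$ through the first coordinate only), without pushing the mean forward. This action is still affine, and your continuity estimate works even better here: for the pairing $\la m,F\ra=\int_X m_x(F(x,\cdot))\,d\mu(x)$ one gets $\abs{\la T\cdot m-S\cdot m,F\ra}\leq 2\norm F_\infty\, d_u(T,S)$ uniformly in $m\in K$, giving joint continuity for the uniform topology on $[\mathcal R]$ and the weak-$*$ topology on $K$. Now the Dirac field is no longer fixed, and a fixed field satisfies $m_x=m_{T\inv(x)}$ a.e.\ for every $T\in[\mathcal R]$; choosing a countable subgroup of $[\mathcal R]$ whose orbits are the $\mathcal R$-classes (Feldman--Moore), this gives $m_x=m_y$ for a.e.\ $x$ and all $y\in[x]_{\mathcal R}$, which is precisely an invariant mean for $\mathcal R$, so $\mathcal R$ is amenable and Connes--Feldman--Weiss upgrades this to hyperfiniteness. (Proposition \ref{transitive} is then not needed for the identification of fixed points.) With this modification your proof coincides with the argument the paper intends.
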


\bibliographystyle{alpha}
\bibliography{/Users/francoislemaitre/Dropbox/Maths/biblio}

\begin{thebibliography}{CFW81}

\bibitem[CFW81]{MR662736}
A.~Connes, J.~Feldman, and B.~Weiss.
\newblock An amenable equivalence relation is generated by a single
  transformation.
\newblock {\em Ergodic Theory Dynamical Systems}, 1(4):431--450 (1982), 1981.

\bibitem[Dye59]{MR0131516}
H.~A. Dye.
\newblock On groups of measure preserving transformation. {I}.
\newblock {\em Amer. J. Math.}, 81:119--159, 1959.

\bibitem[Dye63]{MR0158048}
H.~A. Dye.
\newblock On groups of measure preserving transformations. {II}.
\newblock {\em Amer. J. Math.}, 85:551--576, 1963.

\bibitem[Eig81]{MR654590}
S.~J. Eigen.
\newblock On the simplicity of the full group of ergodic transformations.
\newblock {\em Israel J. Math.}, 40(3-4):345--349 (1982), 1981.

\bibitem[Eps08]{MR2711968}
Inessa Epstein.
\newblock {\em Some results on orbit inequivalent actions of non-amenable
  groups}.
\newblock ProQuest LLC, Ann Arbor, MI, 2008.
\newblock Thesis (Ph.D.)--University of California, Los Angeles.

\bibitem[Gab00]{MR1728876}
Damien Gaboriau.
\newblock Co\^ut des relations d'{\'e}quivalence et des groupes.
\newblock {\em Invent. Math.}, 139(1):41--98, 2000.

\bibitem[Gla98]{MR1617456}
Eli Glasner.
\newblock On minimal actions of {P}olish groups.
\newblock {\em Topology Appl.}, 85(1-3):119--125, 1998.
\newblock 8th Prague Topological Symposium on General Topology and Its
  Relations to Modern Analysis and Algebra (1996).

\bibitem[GP07]{MR2311665}
Thierry Giordano and Vladimir Pestov.
\newblock Some extremely amenable groups related to operator algebras and
  ergodic theory.
\newblock {\em J. Inst. Math. Jussieu}, 6(2):279--315, 2007.

\bibitem[GPS99]{MR1710743}
Thierry Giordano, Ian~F. Putnam, and Christian~F. Skau.
\newblock Full groups of {C}antor minimal systems.
\newblock {\em Israel J. Math.}, 111:285--320, 1999.

\bibitem[Kai97]{MR1485618}
Vadim~A. Kaimanovich.
\newblock Amenability, hyperfiniteness, and isoperimetric inequalities.
\newblock {\em C. R. Acad. Sci. Paris S{\'e}r. I Math.}, 325(9):999--1004,
  1997.

\bibitem[Kec10]{MR2583950}
Alexander~S. Kechris.
\newblock {\em Global aspects of ergodic group actions}, volume 160 of {\em
  Mathematical Surveys and Monographs}.
\newblock American Mathematical Society, Providence, RI, 2010.

\bibitem[KM04]{MR2095154}
Alexander~S. Kechris and Benjamin~D. Miller.
\newblock {\em Topics in orbit equivalence}, volume 1852 of {\em Lecture Notes
  in Mathematics}.
\newblock Springer-Verlag, Berlin, 2004.

\bibitem[KT10]{MR2599891}
John Kittrell and Todor Tsankov.
\newblock Topological properties of full groups.
\newblock {\em Ergodic Theory Dynam. Systems}, 30(2):525--545, 2010.

\bibitem[Lev95]{MR1366313}
Gilbert Levitt.
\newblock On the cost of generating an equivalence relation.
\newblock {\em Ergodic Theory Dynam. Systems}, 15(6):1173--1181, 1995.

\bibitem[LM14a]{gentopergo}
Fran{\c c}ois Le~Ma{\^\i}tre.
\newblock The number of topological generators for full groups of ergodic
  equivalence relations.
\newblock {\em Invent. Math.}, 198:261--268, 2014.

\bibitem[LM14b]{lemai2014}
Fran{\c c}ois Le~Ma{\^\i}tre.
\newblock {\em Sur les groupes pleins pr{\'e}servant une mesure de
  probabilit{\'e}}.
\newblock PhD thesis, ENS Lyon, 2014.
\newblock 2014ENSL0892.

\bibitem[Mat13]{MR3103094}
Hiroki Matui.
\newblock Some remarks on topological full groups of {C}antor minimal systems
  {II}.
\newblock {\em Ergodic Theory Dynam. Systems}, 33(5):1542--1549, 2013.

\bibitem[OW80]{MR551753}
Donald~S. Ornstein and Benjamin Weiss.
\newblock Ergodic theory of amenable group actions. {I}. {T}he {R}ohlin lemma.
\newblock {\em Bull. Amer. Math. Soc. (N.S.)}, 2(1):161--164, 1980.

\bibitem[Ros09]{MR2535429}
Christian Rosendal.
\newblock Automatic continuity of group homomorphisms.
\newblock {\em Bull. Symbolic Logic}, 15(2):184--214, 2009.

\bibitem[RS07]{MR2365867}
Christian Rosendal and S{\l}awomir Solecki.
\newblock Automatic continuity of homomorphisms and fixed points on metric
  compacta.
\newblock {\em Israel J. Math.}, 162:349--371, 2007.

\bibitem[Ryz93]{MR1244984}
V.~V. Ryzhikov.
\newblock Factorization of an automorphism of a full {B}oolean algebra into the
  product of three involutions.
\newblock {\em Mat. Zametki}, 54(2):79--84, 159, 1993.

\bibitem[Vee77]{MR0467705}
William~A. Veech.
\newblock Topological dynamics.
\newblock {\em Bull. Amer. Math. Soc.}, 83(5):775--830, 1977.

\end{thebibliography}

\end{document}